\numberwithin{equation}{section}
\def\?{(?)\marginpar{|?}}
\newtheorem{theorem}{Theorem}[section]
\newtheorem{lemma}[theorem]{Lemma}
\newtheorem{corollary}[theorem]{Corollary}
\newtheorem{definition}[theorem]{Definition}
\newtheorem{example}[theorem]{Example}
\def\beq{\begin{equation}}
\def\eeq{\end{equation}}
\def\be{\begin{equation*}}
\def\ee{\end{equation*}}
\begin{document}

\title{EHRENFEUCHT-FRA\"ISS\'E GAMES ON A CLASS OF SCATTERED LINEAR ORDERS}

\author{F. Mwesigye and J.K. Truss}
  \address{Department of Pure Mathematics,
          University of Leeds,
          Leeds LS2 9JT, UK}
  \email{pmtjkt@leeds.ac.uk, feresiano@yahoo.com}

\keywords{scattered linear order, monomial, Ehrenfeucht-Fra\"iss\'e game\\
supported by a scheme 5 grant from the London Mathematical Society}

\subjclass[2010]{06A05, 03C64}

\begin{abstract}
Two structures $A$ and $B$ are $n$-equivalent if player II has a winning strategy in the $n$-move Ehrenfeucht-Fra\"iss\'e game on $A$ and $B$. In earlier papers we studied $n$-equivalence classes of 
ordinals and coloured ordinals. In this paper we similarly treat a class of scattered order-types, focussing on monomials and sums of monomials in $\omega$ and its reverse $\omega^*$.
\end{abstract}

\maketitle

\section{\textbf{Introduction}}

In \cite{mwesigye2} we studied the equivalence of finite coloured linear orders up to level $n$ in an Ehrenfeucht-Fra\"iss\'e game, written as $\equiv_n$, which means that player II has a winning strategy 
in this game, as well as making some remarks about the infinite case. We gave some bounds for the minimal representatives in the finite case, and the infinite case for up to 2 moves. These results were 
extended in \cite{mwesigye3} to all coloured ordinals, in the monochromatic case giving a precise list of optimal representatives, and in the coloured case giving bounds, and in \cite{mwesigye4} some of the 
bounds for the finite case were improved for 3 moves, and additional details about the classification given for the 2-move case.

In this paper we tackle a class of linear orders which need not be well-ordered, where things are considerably more complicated. This is a subclass of the so-called `scattered' linear orders, being those 
which do not embed the order-type of the rational numbers. According to Hausdorff's characterization, these may be built up from 0 and 1 by means of sums over ordinals or reverse ordinals; see 
Theorem \ref{2.2}. A finer analysis of the class of scattered linear orderings in terms of finite sequences of finite ordinal-labelled trees, is given in \cite {montalban} (concentrating on characterizing 
scattered orders up to mutual embeddability---`equimorphism'). Even this class is too wide for us to analyze at this stage, and we restrict attention to `monomials', which are products of (possibly 
infinitely many) copies of $\omega$ or its reverse $\omega^*$, and certain sums of these. First we briefly recall the required definitions.

For a linear order $(A, <)$ we just write $A$ provided that the ordering is clear. In the $n$-move Ehrenfeucht-Fra\"iss\'e game $G_n(A, B)$ on linear orders $A$ and $B$ (or indeed any relational structures) 
players I and II play alternately, I moving first. On each move I picks an element of either structure (his choice does not have to be from the same structure on every move), and II responds by 
choosing an element of the other structure. After $n$ moves, I and II between them have chosen elements $a_1, a_2, \ldots, a_n$ of $A$, and $b_1, b_2, \ldots, b_n$ of $B$, and player II {\em wins} if 
the map taking $a_i$ to $b_i$ for each $i$ is an isomorphism of induced substructures (that is, it preserves the ordering), and player I wins otherwise. It is not required that the $a_i$ are all distinct, 
or that the $b_i$ are all distinct (though if $a_i = a_j$ but $b_i \neq b_j$ then player II will automatically lose). There is no advantage to player I in repeating a move he has played earlier, but we 
do need to consider this option in view of `2-phase' games which arise later, where moves which are distinct on points may coincide on `blocks'. As we wish to consider varying values of $n$, we may also 
write $G(A, B)$ for the family of games $\{G_n(A, B): n \ge 1\}$.

Intuitively, I is trying to demonstrate that there is some difference between the structures, while player II is trying to show that they are at least reasonably similar. We say that $A$ and $B$ are 
{\em $n$-equivalent} and write $A \equiv_n B$, if II has a winning strategy. It is easy to see that $\equiv_n$ is an equivalence relation, and it is standard that for any $n$, there are only finitely many 
$n$-equivalence classes, so it is natural to enquire what their optimal representatives may be. The problem for general orderings seems to be quite hard, but with special conditions on the type of ordering, 
or the number of moves, some results can be obtained. For ordinals, the notion of `optimality' makes sense since we may just choose the least representative, but in the orders we examine in 
this paper, this is not clear. 

It is easy to see that structures $A$ and $B$ are elementarily equivalent, written $A \equiv B$, if and only if $A \equiv_n B$ for all $n$, so we may regard $n$-equivalence as a natural approximation to 
elementary equivalence. We remark that the downward L\"owenheim-Skolem Theorem reduces the problem of the classification of linear orderings up to elementary equivalence to the same problem for countable 
linear orderings, so if we are trying to characterize optimal representatives for $n$-equivalence, we may also restrict to the countable case. 

If $A$ and $B$ are linear orders, then $A + B$ stands for the concatenation of $A$ and $B$, that is, we first assume (by replacing by copies if necessary) that $A$ and $B$ are disjoint, and we place all 
members of $A$ to the left of all members of $B$. As a generalization of this, we may write $\sum\{A_i: i \in I\}$ for the concatenation of a family of linear orders $\{A_i: i \in I\}$ indexed by a linear 
ordering $I$. We write $A \cdot B$ for the anti-lexicographic product, $B$ `copies of' $A$, to accord with the customary use for ordinals (and unlike \cite{mwesigye3}, where {\em lexicographic} products are 
used). A linear ordering is said to be \emph{scattered} if the order-type of the rational numbers does not embed in it. In \cite{rosenstein} Corollary 2.1.8 it is shown that if $A$ and $B$ are scattered, 
then so are $A + B$ and $A \cdot B$. Following \cite{rosenstein}, we denote by $\zeta$ the order-type of the integers $\mathbb Z$, which can also be construed as $\omega^* + \omega$, and by $\eta$ the 
order-type of the rational numbers $\mathbb Q$. 

\vspace{.1in}

The following straightforward result will be used without explicit reference.

\begin{lemma} {\rm (i)} If $A \equiv_n B$, then $X + A + Y  \hspace{-.01in} \equiv_n \hspace{-.01in} X + B + Y$ and $X \cdot A \cdot Y \hspace{-.01in} \equiv_n \hspace{-.01in} X \cdot B \cdot Y$.

{\rm (ii)} If $A_i \equiv_n B_i$ for each $i \in I$, then $\sum\{A_i: i \in I\} \equiv_n \sum\{B_i: i \in I\}$. \label{1.1}  \end{lemma}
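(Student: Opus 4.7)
My plan is the standard Ehrenfeucht--Fra\"iss\'e recipe: in each assertion player II will route each of I's moves into an appropriate subgame in which she already possesses a winning strategy, and then verify that the pieced-together responses still yield an order-preserving partial map.

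For (i), the sum case $G_n(X+A+Y, X+B+Y)$ is easiest. Each element of either structure lies in exactly one of the three blocks $X$, middle ($A$ or $B$), $Y$, and the outer blocks $X, Y$ are literally the same on both sides. Player II plays the identity on $X$ and on $Y$, and plays her winning strategy $\sigma$ for $G_n(A,B)$ on the middle block. After $n$ moves, order inside $X$ and inside $Y$ is preserved trivially, order inside the middle block is preserved because $\sigma$ wins, and order between blocks is rigidly determined by the concatenation. For the product case, an element of $X \cdot A \cdot Y$ is a triple $(x,a,y)$ ordered first by $y$, then by $a$, then by $x$; analogously in $X \cdot B \cdot Y$. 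When I picks $(x,a,y)$, II responds with $(x, \sigma(a), y)$, where $\sigma$ is her winning strategy in $G_n(A,B)$ applied to the sequence of middle coordinates played so far. Order preservation splits according to which coordinate first separates two chosen points: equality of the outer $y$'s or of the middle $a$'s is transferred automatically (for $y$) or via $\sigma$ (for $a$), and if both agree then only $x$ matters, which II matches identically. The only subtlety is that if I picks a new triple with a repeated middle value $a$, the auxiliary game sees $a$ twice, and $\sigma$ must respond with the same $b$ both times; this is automatic, since otherwise II would already lose the auxiliary game.

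For (ii), in $G_n(\sum\{A_i: i \in I\}, \sum\{B_i: i \in I\})$ every move lies in a unique component. Player II maintains in parallel the winning strategies $\sigma_i$ for $G_n(A_i, B_i)$, activating $\sigma_{i_0}$ the first time either side plays in index $i_0$ and thereafter using it for all moves in that component. At most $n$ indices are ever touched, and each $\sigma_i$ is fed at most $n$ moves, so it produces an order-preserving partial map in its component; order between points in different components is rigidly determined by the shared index. The one slightly delicate point, which I would address explicitly, is that each $\sigma_i$ is in general being used on a partial play of length less than $n$; but a winning strategy in the $n$-move game yields an order-preserving map at the end of any play, and hence on every initial segment as well, so the partial maps in each component are indeed order-preserving. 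The same remark covers the partial use of $\sigma$ in the product case of (i). I do not expect any real obstacle here; the whole argument is a bookkeeping exercise, with the product in (i) being the only place where one must be careful about what ``the same move'' means across the three coordinates.
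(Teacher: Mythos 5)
Your proposal is correct: the paper states Lemma \ref{1.1} without proof (calling it ``straightforward''), and your argument---routing player I's moves into the relevant auxiliary game and copying identically on the fixed blocks or coordinates---is exactly the standard composition-of-strategies argument the authors intend. The two points you flag explicitly (repeated middle coordinates forcing $\sigma$ to repeat its response, and the fact that a winning strategy for the $n$-move game yields order-preservation on every initial segment of play) are indeed the only delicacies, and you handle both correctly.
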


Every ordinal $\alpha$  can be written in the form $\alpha = \omega^\omega \cdot \alpha_1 + \alpha_2$ where $\alpha_2 < \omega^\omega$ and in terms of this representation, the following theorem of Mostowski 
and Tarski \cite{mostowski} helps in understanding ordinals up to elementary equivalence.  

\begin{lemma} Let $\alpha = \omega^\omega \cdot \alpha_1 + \alpha_2$ and $\beta = \omega^\omega \cdot \beta_1 + \beta_2$ be 
ordinals, where $\alpha_2, \beta_2 < \omega^\omega$. Then $\alpha$ is elementarily equivalent to $\beta$ if and only if 
$\alpha_2 = \beta_2$ and either $\alpha_1 = \beta_1 = 0$ or $\alpha_1, \beta_1 > 0$.
\label{1.2}   \end{lemma}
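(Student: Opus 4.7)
The plan is to use the Ehrenfeucht-Fra\"iss\'e characterization $A \equiv B \iff A \equiv_n B$ for all $n$, together with the compositionality Lemma \ref{1.1}, splitting into sufficiency and necessity. For sufficiency, assume $\alpha_2 = \beta_2$ and either $\alpha_1 = \beta_1 = 0$ (when $\alpha = \beta$) or both positive. In the latter case, Lemma \ref{1.1}(i) with $Y = \alpha_2$ and transitivity of $\equiv$ reduce the problem to proving $\omega^\omega \cdot \gamma \equiv \omega^\omega$ for every ordinal $\gamma \ge 1$. Fix $n$; I describe a winning strategy for player II in $G_n(\omega^\omega, \omega^\omega \cdot \gamma)$. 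The cornerstone is the auxiliary claim $\omega^n \cdot k \equiv_n \omega^n$ for every finite $k \ge 1$, proved by induction on $n$: the inductive step decomposes each $\omega^n$-block into $\omega$-many $\omega^{n-1}$-sub-blocks, and player II responds to player I's move in a given sub-block by choosing the corresponding position inside a matching sub-block on the other side, using the inductive hypothesis on the newly created flanking intervals. In the main game, only finitely many of the $\gamma$ copies of $\omega^\omega$ are touched in $n$ moves, so one may assume $\gamma$ is finite; since $\omega^\omega$ contains $\omega^{n+1}$ as an initial segment, player II can simulate player I inside a sufficiently tall $\omega^n$-initial segment of $\omega^\omega$ via the auxiliary claim.

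For necessity, contrapositively, I exhibit a distinguishing first-order sentence in each failing case. If $\alpha_1 = 0 \neq \beta_1$, then $\beta$ has an element below which copies of arbitrarily large $\omega^n$ sit, captured for each finite $n$ by a sentence asserting the existence of strictly nested increasing $n$-chains of elements with suitably growing gaps; for $n$ exceeding the Cantor-normal-form complexity of $\alpha$, this sentence separates $\alpha$ and $\beta$. If instead $\alpha_1, \beta_1 \ge 1$ and $\alpha_2 \neq \beta_2$, the distinct Cantor normal forms of $\alpha_2$ and $\beta_2$ differ at some exponent, and this difference is witnessed by a sentence about the rightmost block-structure of $\alpha$ and $\beta$; the $\omega^\omega$-prefix does not interfere because any such tail-localized sentence sees only the final $\alpha_2$ or $\beta_2$.

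The main obstacle is the inductive EF strategy behind the auxiliary claim $\omega^n \cdot k \equiv_n \omega^n$, which requires careful bookkeeping: after $i$ moves, each of the matched intervals formed by the chosen points must remain $\equiv_{n-i}$, which demands that player II preserve on both sides enough spare $\omega^{n-i-1}$-sub-blocks in every interval to support the next $n - i$ moves. This bookkeeping also underlies the transition from finite $k$ to arbitrary $\gamma$ in the main game.
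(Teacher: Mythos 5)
The paper does not actually prove this lemma: it is quoted from Mostowski and Tarski, with the quantitative versions (Lemmas \ref{1.3}, \ref{1.4} and \ref{2.7}) carrying the content that is used later. Measured against that machinery, your sufficiency argument has a genuine gap. The step ``only finitely many of the $\gamma$ copies of $\omega^\omega$ are touched in $n$ moves, so one may assume $\gamma$ is finite'' is not a legitimate reduction: which copies are touched depends on player I's moves, the residual intervals between touched points still carry the full order-type, and replacing $\gamma$ by a finite ordinal is essentially the assertion $\omega^\omega\cdot\gamma\equiv_n\omega^\omega\cdot k$ that you are trying to establish. Likewise the bridge from the auxiliary claim $\omega^n\cdot k\equiv_n\omega^n$ to the game on $\omega^\omega$ versus $\omega^\omega\cdot k$ fails as stated: player I may play points of $\omega^\omega$ far beyond any initial segment of type $\omega^{n+1}$, so player II cannot ``simulate inside a sufficiently tall $\omega^n$-initial segment'' without already knowing that an arbitrary point of $\omega^\omega$ is $n$-equivalently placed to one inside that segment, which is again the content of the claim. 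The clean route, available from the paper's own Lemma \ref{1.3}(i) (or Lemma \ref{2.7}(i)), is to write $\omega^\omega=\omega^n\cdot\omega^\omega$ and $\omega^\omega\cdot\gamma=\omega^n\cdot(\omega^\omega\cdot\gamma)$, note that both right-hand factors have least elements, and conclude $\omega^\omega\equiv_{2n}\omega^\omega\cdot\gamma$ for every $n$ at once.

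The necessity direction is also only a sketch precisely where the difficulty lies. The assertion that a ``tail-localized sentence sees only the final $\alpha_2$ or $\beta_2$'' and that ``the $\omega^\omega$-prefix does not interfere'' is exactly what must be proved: one has to show that no interval of $\omega^\omega\cdot\alpha_1$ can imitate the tail $\beta_2$, which requires an actual game argument (in the paper's framework, player I plays a point and then applies Lemma \ref{1.4} or Lemma \ref{1.3}(ii) to the right of it). You also omit the case $\alpha_1=\beta_1=0$ with $\alpha_2\neq\beta_2$; it is the easiest case (two distinct ordinals below $\omega^\omega$, separated by Lemma \ref{1.4}), but it is logically required, since your two displayed cases cover only ``exactly one of $\alpha_1,\beta_1$ is zero'' and ``both are positive''.
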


As remarked in \cite{rosenstein}, this result enables us to conclude that `the set of all ordinals less than $\omega^\omega \cdot 2$ forms a \textit{complete and irredundant} set of representatives of the 
elementary equivalence classes of well-orderings'. The following more precise result of Mostowski and Tarski is also needed (see \cite{rosenstein} for a proof):

\begin{lemma} For any $n >0$, and ordinal $\beta > 1$, {\rm (i)} $\omega^n \equiv_{2n} \omega^n \cdot \beta$,
{\rm (ii)} $\omega^n \not \equiv_{2n+1} \omega^n \cdot \beta$. \label{1.3}
\end{lemma}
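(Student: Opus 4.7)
The plan is to prove (i) and (ii) by a simultaneous induction on $n \geq 1$, strengthening the inductive hypothesis so that it survives the odd levels $2n+1$ that inevitably arise when the back-and-forth characterisation of $\equiv_{2n+2}$ peels off one move. Explicitly, for each $n \geq 1$ I would prove the joint package
\begin{itemize}
\item[$(\mathrm{I}_n)$] $\omega^n \cdot \alpha \equiv_{2n} \omega^n \cdot \beta$ for all ordinals $\alpha, \beta \geq 1$;
\item[$(\mathrm{II}_n)$] $\omega^n \cdot \alpha \equiv_{2n+1} \omega^n \cdot \beta$ for all ordinals $\alpha, \beta \geq 2$;
\item[$(\mathrm{III}_n)$] $\omega^n \not\equiv_{2n+1} \omega^n \cdot \beta$ for $\beta \geq 2$.
\end{itemize}
Part (i) is $(\mathrm{I}_n)$ at $\alpha = 1$, and part (ii) is $(\mathrm{III}_n)$.

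The central tool is the back-and-forth characterisation of $\equiv_m$ for linear orders: $A \equiv_m B$ iff for every $a \in A$ there is $b \in B$ with $A_{<a} \equiv_{m-1} B_{<b}$ and $A_{>a} \equiv_{m-1} B_{>b}$, together with the dual condition. (For linear orders the subgames above and below a chosen point do not interact.) Lemma \ref{1.1} then lets me cancel a common residue $< \omega^n$ from both sides of an initial-segment comparison and reduce to matching two $\omega^n$-multiples. The base case $n = 1$ is handled by hand: $(\mathrm{I}_1)$ and $(\mathrm{II}_1)$ use only that each $\omega \cdot \alpha$ is an infinite linear order with a least element and no greatest; for $(\mathrm{III}_1)$ Player I plays $b_1 = \omega \in \omega \cdot \beta$, responds to any reply $a_1 = k \in \omega$ with $a_2 = k - 1$ (or, if $k = 0$, with $b_2 = 0 \in \omega \cdot \beta$, forcing Player II to have no valid move at once), and wins on move~3 by playing an element strictly between $b_2$ and $b_1$, an infinite interval in $\omega \cdot \beta$ but empty in $\omega$.

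For the inductive step, write every element of $\omega^{n+1} \cdot \gamma$ uniquely as $\omega^{n+1} \cdot p + \omega^n \cdot k + r$ with $r < \omega^n$, $k < \omega$, $p < \gamma$. A direct ordinal-arithmetic computation using indecomposability of $\omega^{n+1}$ shows that the final segment above any such element is always a positive multiple of $\omega^{n+1}$. Given a Player I move $a = \omega^{n+1} p + \omega^n k + r$, Player II replies with $b = \omega^{n+1} q + \omega^n k^* + r$ (same residue $r$), where $q$ and $k^*$ are chosen so that $\omega q + k^*$ lies in the same $(\mathrm{II}_n)$-class as $\omega p + k$, namely $0$, $1$, or $\geq 2$; a response with $q = 0$ and $k^* \in \{0, 1, 2\}$ exhausts the three classes and is available in $\omega^{n+1} \cdot \beta$ for every $\beta \geq 1$. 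Via Lemma \ref{1.1} this matches the initial segments at level $\equiv_{2n+1}$; the final segments match automatically by $(\mathrm{II}_n)$ applied to two positive multiples of $\omega^{n+1} = \omega^n \cdot \omega$. This yields $(\mathrm{I}_{n+1})$, and $(\mathrm{II}_{n+1})$ follows by the same argument one level up, using $(\mathrm{I}_{n+1})$ in place of $(\mathrm{II}_n)$. For $(\mathrm{III}_{n+1})$, Player I opens with $b_1 = \omega^{n+1} \in \omega^{n+1} \cdot \beta$; any reply $a_1 \in \omega^{n+1}$ has the form $\omega^n c + d$ with $c < \omega$ and $d < \omega^n$, so the initial segment below $a_1$ has order type strictly less than $\omega^{n+1}$ while that below $b_1$ is $\omega^{n+1}$ itself. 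Player I then invokes $(\mathrm{III}_n)$ on a matched pair of $\omega^n$-blocks inside the two initial segments, concretely the $(c+1)$-th block below $b_1$ which has no analogue below $a_1$, finishing in the remaining $2n+2$ moves.

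The main obstacle is the case analysis for Player II's reply in $(\mathrm{I}_{n+1})$ when $\beta = 1$ while Player I plays far out in a long source $\omega^{n+1} \cdot \alpha$; the key idea is that Player II can compensate by a bounded shift $k^* = 2$ in the unique copy, placing both $\omega^n$-coefficients in the $(\mathrm{II}_n)$-regime $\geq 2$. A secondary obstacle is packaging the $(\mathrm{III}_n)$ strategy inside the $(2n+3)$-move budget for $(\mathrm{III}_{n+1})$: the opening move plus Player II's response consume two moves, and one must verify that the further $2n+1$-move sub-game extracted from $(\mathrm{III}_n)$ can be started on the chosen $\omega^n$-block without wasting moves on additional positioning.
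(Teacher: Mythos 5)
Your overall architecture---a simultaneous induction carrying the strengthened statements $(\mathrm{I}_n)$, $(\mathrm{II}_n)$, $(\mathrm{III}_n)$---is sound, and the positive half essentially works: the decomposition $a=\omega^{n+1}\cdot p+\omega^n\cdot k+r$, matching the coefficient $\omega p+k$ up to the classes $0$, $1$, $\ge 2$, and the observation that final segments are positive multiples of $\omega^{n+1}$ give a correct derivation of $(\mathrm{I}_{n+1})$ and $(\mathrm{II}_{n+1})$ from $(\mathrm{II}_n)$. (The paper does not prove Lemma \ref{1.3} directly but proves the stronger Lemma \ref{2.7} by a very similar two-part induction, with arbitrary linear orders $X,Y$ in place of your ordinals $\ge 2$.) One minor repair: your justification of $(\mathrm{II}_1)$ (``uses only that each $\omega\cdot\alpha$ is infinite with a least element and no greatest'') cannot be right as stated, since $\omega$ itself has those properties and $(\mathrm{III}_1)$ asserts $\omega\not\equiv_3\omega\cdot 2$; what you actually need is that for $\alpha,\beta\ge 2$ both orders realise all five $2$-equivalence types of initial segment ($0$, $1$, $2$, finite $\ge 3$, infinite), which is exactly where the hypothesis $\alpha,\beta\ge 2$ enters.

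The substantive gap is in $(\mathrm{III}_{n+1})$. After I plays $b_1=\omega^{n+1}$ and II replies $a_1=\omega^n c+d$, the mismatch you point to---the $(c+1)$-th $\omega^n$-block below $b_1$ versus its ``missing analogue'' below $a_1$---is a comparison of a full block of type $\omega^n$ against a deficient piece of type $d<\omega^n$ (or nothing at all). That is not an instance of $(\mathrm{III}_n)$, which compares $\omega^n$ with $\omega^n\cdot\beta$ for $\beta\ge 2$, i.e.\ one block against at least two; no second move of I inside the large structure forces II into a position to which $(\mathrm{III}_n)$ literally applies, and establishing the mismatch you describe amounts to proving $d\not\equiv\omega^n\cdot\xi$ for arbitrary $d<\omega^n$, which your inductive package does not contain. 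The step that does work (and is what the paper's proof of Lemma \ref{2.7}(ii) does) is for I to play his second move in the \emph{small} structure: take $a_2<a_1$ with $[a_2,a_1)\cong\omega^k$, where $\omega^k$ is the last term of the Cantor normal form of $a_1$, so $k\le n<n+1$. Whatever $b_2<b_1$ II returns, $[b_2,b_1)$ is a final segment of a copy of $\omega^{n+1}$ followed by further copies, hence $\cong\omega^{n+1}\cdot Z=\omega^k\cdot(\omega^{n+1-k}\cdot Z)$ by indecomposability, and now $(\mathrm{III}_k)$ gives a win in $2k+1\le 2n+1$ further rounds (the degenerate case $k=0$, where $a_1$ is a successor, is handled by I playing inside the nonempty interval $(b_2,b_1)$). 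Note this requires the inductive hypothesis at \emph{all} levels $k\le n$, not just level $n$, so the induction must be run in strong form. Without this redirection of I's second move into the small structure, the induction does not close.
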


From Lemma \ref{1.2} we know that if $\alpha$ and $\beta$ are distinct ordinals which are both $< \omega^\omega$, then $\alpha \not \equiv \beta$. Later in the paper we require the following more precise 
information about the level at which this elementary inequivalence is shown. Note that the estimate of the number of moves required is given in terms of the Cantor normal form for $\alpha$, even though
$\beta$ may have a longer Cantor normal form (but this does not matter).

\begin{lemma} If $\alpha = \omega^{m_0} + \omega^{m_1} + \ldots + \omega^{m_{k-1}}$ where $m_0 \ge m_1 \ge \ldots \ge m_{k-1}$, and $\beta < \alpha$, then $\alpha \not \equiv_{2m_0 + k} \beta$. 
\label{1.4}   \end{lemma}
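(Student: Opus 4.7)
My plan is to prove the lemma by induction on $k$, with the base case handled via the earlier lemmas and the inductive step via a two-phase strategy for player I.

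For the base case $k=1$, where $\alpha=\omega^{m_0}$ and $\beta<\omega^{m_0}$, I would argue by contradiction. If $\omega^{m_0}\equiv_{2m_0+1}\beta$, then Lemma \ref{1.1}(i) (with $X$ empty and $Y=\omega^{m_0}$ on the right) gives $\beta+\omega^{m_0}\equiv_{2m_0+1}\omega^{m_0}\cdot 2$. Since $\omega^{m_0}$ is additively indecomposable, the left side collapses to $\omega^{m_0}$, and so $\omega^{m_0}\equiv_{2m_0+1}\omega^{m_0}\cdot 2$, directly contradicting Lemma \ref{1.3}(ii).

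For the inductive step $k\ge 2$ I would exhibit a two-phase winning strategy for player I in $G_{2m_0+k}(\alpha,\beta)$. Setting $\sigma_j:=\omega^{m_0}+\ldots+\omega^{m_{j-1}}$, in phase one (lasting $k-1$ moves) he plays $a_j:=\sigma_j$ in $\alpha$ for $j=1,\ldots,k-1$, so that these points split $\alpha$ into the natural sub-intervals of types $\omega^{m_0},\omega^{m_1},\ldots,\omega^{m_{k-1}}$. Player II is forced to reply with an increasing chain $b_1<\ldots<b_{k-1}$ in $\beta$ (otherwise she loses at once), splitting $\beta$ into sub-intervals of types $\beta_0,\beta_1,\ldots,\beta_{k-1}$ whose sum equals $\beta<\alpha=\sum_j\omega^{m_j}$. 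Monotonicity of ordinal addition in each argument then rules out $\beta_j\ge\omega^{m_j}$ holding for every $j$, so there is some $j^*$ with $\beta_{j^*}<\omega^{m_{j^*}}$.

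In phase two, player I restricts attention to the $j^*$-th sub-interval on each side, using the remaining $2m_0+1$ moves; player II is constrained to respond inside the matching sub-interval of $\beta$, since any move outside it would misorder with respect to one of the boundary pairs already played. The remaining game is thus an instance of $G_{2m_0+1}(\omega^{m_{j^*}},\beta_{j^*})$, and because $2m_{j^*}+1\le 2m_0+1$ the base case supplies a winning strategy within the remaining budget, giving a total of $(k-1)+(2m_0+1)=2m_0+k$ moves. The delicate point I expect to take the most care with is that when $j^*\ge 1$ the minima $a_{j^*}$ and $b_{j^*}$ of the two sub-intervals are already played markers, so the sub-game is formally a \emph{pointed} game; but since any order-isomorphism of intervals must send minimum to minimum, this marking is automatic and the base case still applies without loss.
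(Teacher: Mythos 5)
Your proof is correct, but it takes a genuinely different route from the paper's in both halves. For the base case the paper constructs an explicit strategy (player I plays $b_1$ with $\beta^{\ge b_1}\cong\omega^l$, $l<m_0$, and finishes on the right with Lemma \ref{1.3}(ii)); you instead argue algebraically, combining Lemma \ref{1.1}(i) with the absorption identity $\beta+\omega^{m_0}=\omega^{m_0}$ to contradict Lemma \ref{1.3}(ii). This is shorter, but note two small points: it uses determinacy of the finite game to convert $\omega^{m_0}\not\equiv_{2m_0+1}\beta$ into an actual winning strategy for player I (which you then need in phase two), and it fails in the degenerate case $m_0=0$, which the lemma does cover since trailing exponents may be $0$ --- there the conclusion you derive is $1\equiv_1 2$, which is true rather than contradictory (Lemma \ref{1.3} excludes $n=0$), so you must dispose of $\alpha=1$, $\beta=0$ by hand. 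For $k\ge 2$ your argument is not really an induction: you lay down all $k-1$ break points $\sigma_j$ at once, pigeonhole (via monotonicity of ordinal sums) a block with $\beta_{j^*}<\omega^{m_{j^*}}$, and finish by a single application of the base case in the remaining $2m_0+1$ moves; your handling of the marked endpoints is adequate, since the pair $(\min,\min)$ is compatible with every partial isomorphism between the two blocks and so the pointed and unpointed subgames coincide. The paper instead plays only one point (the first point of the $\omega^{m_1}$-block), either exploits a deficiency on the left at once via Lemma \ref{1.3}(ii) or passes to the right-hand tails and invokes the genuine inductive hypothesis, with the count $1+2m_1+(k-1)\le 2m_0+k$. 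Both strategies meet the stated bound; yours is arguably more transparent as a one-step reduction to the monomial case, while the paper's adaptive version (break off as soon as player II falls behind) is the form that is replayed later, e.g.\ in Case 3 of the proof of Theorem \ref{4.5}, where player I must abandon the sequence of first points at the first move on which II strays.
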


\begin{proof} We use induction on $k$. If $k = 1$ then $\alpha = \omega^{m_0}$. If $\beta = 0$, player I wins in one move by playing any point of $\alpha$. Otherwise, there is $b_1 \in \beta$ such that
$\beta^{\ge b_1} \cong \omega^l$ for some $l$ (possibly 0), which player I plays on his first move. Since $\beta < \alpha$, $l < m_0$. Let $a_1 \in \alpha$ be II's reply. Then 
$\alpha^{\ge a_1} \cong \omega^{m_0}$ and $\beta^{\ge b_1} \cong \omega^l$, so player I can win in at most $2l+1 \le 2m_0 - 1$ more moves by Lemma \ref{1.3}(ii), if $l > 0$ (if $l = 0$, then $b_1$ is 
greatest in $\beta$, so I wins in 1 more move by playing any point $a_2 > a_1$). 

For the induction step, on his first move, player I plays the first point $a_1$ of $\omega^{m_1}$, and we let $b_1$ be II's reply. If $b_1$ is the least point of $\beta$, then I wins on the next move by
playing the least point of $\alpha$. Otherwise, let $b < b_1$ be such that for some $l$, $[b, b_1) \cong \omega^l$. Since $\beta < \alpha$, $l \le m_0$. If $l < m_0$, player I plays $b_2 = b$, and whatever
$a_2 < a_1$ II plays, $[a_2, a_1) \cong \omega^{m_0}$ and $[b_2, b_1) \cong \omega^l$, so I wins in $2l+1 \le 2m_0 - 1$ more moves by Lemma \ref{1.3}(ii), or by the above argument if $l = 0$. Otherwise, 
$l = m_0$ and hence if $\alpha' = \omega^{m_1} + \ldots + \omega^{m_{k-1}} \cong \alpha^{\ge a_1}$ and $\beta' = \beta^{\ge b_1}$, $\beta' < \alpha'$. By the induction hypothesis, player I can now win on 
the right in at most $2m_1 + k-1$ more moves, and since $m_1 \le m_0$, this makes at most $2m_0 + k$ in all.   \end{proof}

\vspace{.1in}

The next lemma tells us how reversal works with respect to sums and products, where for any $A$, $A^*$ stands for the linear ordering obtained by reversing the ordering on $A$.

\begin{lemma} \label{1.5}
If $A$, $B$, and $I$ are linear orderings, then 
\begin{itemize}
\item[(i)] $(A + B)^* = B^* + A^*$,
\item[(ii)] $(A \cdot B)^* = A^* \cdot B^*$,
\item[(iii)] $(\sum\{A_i: i \in I\})^\ast = \sum\{(A_i)^\ast: i\in I^\ast\}$ where $A_i$ are linear orderings.
\end{itemize}
\end{lemma}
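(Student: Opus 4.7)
The plan is essentially to unpack definitions; all three parts are routine verifications that two orderings on the same underlying set coincide, so the real content is just bookkeeping. Since (i) is a special case of (iii) (take $I = \{0,1\}$ with $A_0 = A$, $A_1 = B$), I would either derive (i) from (iii) or at least note that the argument for (iii) specializes.

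First I would prove (iii). Fix a choice of disjoint copies so that elements of $\sum\{A_i : i \in I\}$ can be written uniquely as pairs $(a, i)$ with $i \in I$ and $a \in A_i$, with the order defined by $(a, i) < (a', i')$ iff $i <_I i'$, or $i = i'$ and $a <_{A_i} a'$. Both $(\sum\{A_i : i \in I\})^*$ and $\sum\{(A_i)^* : i \in I^*\}$ have the same underlying set of such pairs, so it suffices to check that their orderings agree. The left-hand side reverses the order above, giving $(a', i') < (a, i)$ in the original order. On the right-hand side, $(a, i) < (a', i')$ iff $i <_{I^*} i'$, or $i = i'$ and $a <_{(A_i)^*} a'$; rewriting using the definitions of $I^*$ and $(A_i)^*$ gives exactly the same condition. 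Then (i) follows by specialization.

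For (ii), the anti-lexicographic product $A \cdot B$ has underlying set $A \times B$ with $(a_1, b_1) < (a_2, b_2)$ iff $b_1 <_B b_2$, or $b_1 = b_2$ and $a_1 <_A a_2$. Reversing gives $(a_1, b_1) <^* (a_2, b_2)$ iff $b_2 <_B b_1$, or $b_1 = b_2$ and $a_2 <_A a_1$, which is precisely $b_1 <_{B^*} b_2$, or $b_1 = b_2$ and $a_1 <_{A^*} a_2$, i.e.\ the anti-lexicographic order on $A^* \cdot B^*$. Alternatively, one can recognize $A \cdot B$ as $\sum\{A_b : b \in B\}$ with each $A_b$ a copy of $A$ and apply (iii) directly.

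There is no real obstacle here; the only thing to be careful about is the direction convention in the anti-lexicographic product (making sure $B$ indexes the outer factor so the reversal of $I$ rather than of the inner copies produces the correct rearrangement), and verifying that the identity map on pairs witnesses equality of the two orderings rather than merely an isomorphism.
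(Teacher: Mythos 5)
Your proof is correct and follows essentially the same route as the paper's: a direct element-by-element verification that reversing the anti-lexicographic (respectively sum) order on pairs yields exactly the order on $A^* \cdot B^*$ (respectively $\sum\{(A_i)^*: i \in I^*\}$), with (i) obtained as the two-term case. The only cosmetic difference is that you prove (iii) first and specialize to (i), whereas the paper treats (i) as immediate and (iii) as its generalization; the substance is identical.
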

 
In \cite{mwesigye3}, a classification is given of all ordinals up to $n$-equivalence, which is based on their Cantor normal form representation. In this paper we give some results aimed at classifying all 
scattered linear orderings up to elementary equivalence and $n$-equivalence. Since this forms a large and complicated class, we restrict attention to ones which can be reasonably easily constructed. We 
shall see (in Theorem \ref{2.3}, Corollary \ref{3.7} and Theorem \ref{4.5}, and using the downward L\"oweinheim-Skolem theorem) that there are exactly $2^{\aleph_0}$ elementary equivalence classes of 
scattered orders. We principally focus on `monomials', and sums of these indexed by a finite set, or $\omega$, $\omega^*$, or $\zeta$. 

\begin{definition} \label{1.6} A \textbf{monomial} is a linear ordering which is the restricted anti-lexicographic product of a non-empty well-ordered sequence of orderings each of which is $\omega$, 
$\omega^*$, $\zeta$, or a non-zero natural number. It is said to be a \textbf{countable monomial} if it is the product of a countable such sequence.  \end{definition}

Notice that we require the family to be well-ordered, and this is so that when we order it anti-lexicographically, it is scattered, see Lemma \ref{2.4} below. (We only need consider anti-lexicographic 
products since lexicographic ones are isomorphic to the anti-lexicographic products obtained by reversing the order of the terms; the order the terms are taken in, and whether the product is lexicographic or 
anti-lexicographic, crucially affects its behaviour.) In addition, in order to obtain a linear order at all, the product needs to be taken to be `restricted', which means that it consists of the members of 
the product having `finite support'. The general notion is this: let I and $X_i$ for all $i \in I$ be linearly ordered sets and assume that $0 \in X_i$ for all $i \in I$ (where 0 may actually lie in 
$X_i$, or is some arbitrarily chosen `default value'). The infinite restricted product $\prod_{i \in I}X_i$ then stands for the set of functions $f: I \to \bigcup_{i \in I}X_i$ such that  $f(i) \in X_i$ for 
all $i$, and $\{i: f(i) \neq 0\}$ is finite. We say that $f$ has \textit{finite support}. The order is taken to be \textit{anti-lexicographic}, that is, by last difference (which extends the definition for 
the product of two linear orders to the general case). This means that for $f, g \in \prod_{i \in I}X_i$, $f < g$ if $(\exists i)(f(i) < g(i) \wedge ((\forall j > i)f(j) = g(j)))$. The hypothesis of 
finiteness of the support guarantees that such $i$ always exists. For example, $\omega \cdot \omega \cdot \omega \cdots$ is a scattered linear ordering (just equal to the usual ordinal power 
$\omega^{\omega}$) when ordered anti-lexicographically but $ \cdots \omega \cdot \omega \cdot \omega$ is dense as is seen in Lemma \ref{2.4}.

The paper is organized as follows. In section 2 we give further preliminary results needed, leading up to the `Two-phase Lemma' \ref{2.9}. In section 3 we give our main results concerning $n$-equivalence 
of monomials. Theorems \ref{3.4} and \ref{3.5} characterize the optimal length of equivalence of monomials which are finite products of $\omega$ and $\omega^*$. This is extended to monomials involving 
infinite terms, or infinitely many terms, or both, or involving a finite term or $\zeta$. Finally in section 4 we consider sums of finitely many, $\omega$, $\omega^*$, or $\zeta$ monomials. Because of the 
complications involved, we consider arbitrary sums just for powers of $\omega$ or $\omega^*$, and sums of just {\em two} more general monomials. These results should be enough to illustrate the issues 
arising.

\section{\textbf{Preliminary results}}

Scattered linear orderings may be analyzed in terms of \textit{Hausdorff rank} as given in \cite{rosenstein} (where it was called \textit{$F$-rank}). Here we just treat the countable case.

\begin{definition}  \label{2.1}
Let $V= \bigcup \{V_{\alpha}: \alpha < \omega_1\}$ where
\begin{itemize}
\item[(i)]  $ V_0 = \{0, 1\}$,
\item[(ii)] if $\alpha > 0$, $V_\alpha$ is the set of all linear orderings which may be written in the form 
$\sum\{L_i: i \in I\}$ where $I \cong \omega, \omega^*, \zeta = \omega^* + \omega$, or finite $n$, and each $L_i$ lies in $\bigcup\{V_\beta: \beta < \alpha\}$,
\item[(iii)] the {\em Hausdorff rank} of $L$ is the least $\alpha$, if any, such that $L \in V_\alpha$. 
 \end{itemize}
\end{definition}
The following theorem characterizes all countable scattered linear orderings.

\begin{theorem}[Hausdorff,\cite{rosenstein}]  \label{2.2} A countable linear ordering $L$ is in $V$ if and only if it is a 
countable scattered linear ordering.   \end{theorem}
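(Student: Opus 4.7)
The plan is to handle the two implications separately. For the direction $L \in V \Rightarrow L$ countable and scattered, I would use transfinite induction on Hausdorff rank. The base case $V_0 = \{0,1\}$ is immediate. For the inductive step, write $L = \sum_{i \in I} L_i$ with $I$ being $\omega$, $\omega^*$, $\zeta$, or finite $n$, and each $L_i$ of strictly lower rank, hence by induction countable and scattered. Countability of $L$ is then immediate. For scatteredness, suppose for contradiction that an order-embedding $\phi \colon \eta \to L$ existed. For each $i$, $\phi^{-1}(L_i)$ is a convex subset of $\eta$; if it contains more than one element it already embeds $\eta$, contradicting scatteredness of $L_i$, whereas if each such preimage has at most one element then $\phi$ descends to an order-embedding of $\eta$ into $I$, contradicting scatteredness of $I$.

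For the converse, given a countable scattered $L$, I would introduce the iterated finite condensation. Define convex equivalence relations $\sim_\alpha$ on $L$ by recursion: let $\sim_0$ be equality; let $x \sim_{\alpha+1} y$ iff the interval between $[x]_{\sim_\alpha}$ and $[y]_{\sim_\alpha}$ in $L/\sim_\alpha$ contains only finitely many classes; and for limit $\lambda$ set $\sim_\lambda = \bigcup_{\beta<\lambda} \sim_\beta$. Since $L \times L$ is countable and the chain $(\sim_\alpha)$ is weakly increasing with each strict increase adding at least one new pair, the chain stabilizes at some countable ordinal $\alpha_0$.

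The crucial claim is that $\sim_{\alpha_0}$ is the total relation, so that $L/\sim_{\alpha_0}$ is a single point. Were this false, $L/\sim_{\alpha_0}$ would be a nontrivial linear order in which no two distinct elements bound a finite interval of classes; hence no pair could be consecutive and every nonempty open interval of the quotient would be infinite, yielding density. A countable dense linear order with at least two elements contains a copy of $\eta$, and lifting one representative from each relevant class back to $L$ produces an embedding of $\eta$ into $L$, contradicting scatteredness.

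To finish, I would verify by transfinite induction that every $\sim_\alpha$-class lies in $\bigcup_{\beta \le \alpha} V_\beta$; applied at $\alpha = \alpha_0$ this gives $L \in V$. At a successor stage $\alpha+1$, a $\sim_{\alpha+1}$-class $C$ has $C/\sim_\alpha$ with all closed intervals finite, so is isomorphic to $\omega$, $\omega^*$, $\zeta$, or finite $n$, and $C$ is the corresponding sum of its $\sim_\alpha$-classes, each in $V_{\le \alpha}$ by induction. At a limit $\lambda$, a $\sim_\lambda$-class $C$ containing a chosen point $c$ is the nested union of the $\sim_{\beta_n}$-classes through $c$ for a cofinal sequence $\beta_n \uparrow \lambda$ (which exists since $\lambda < \omega_1$); telescoping writes $C$ as a sum indexed by $\omega^* + 1 + \omega = \zeta$, whose pieces are convex subsets of earlier $\sim_{\beta_n}$-classes. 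The main obstacle is this limit-stage bookkeeping: one must check that convex subsets of orderings in $V_\beta$ remain in $V_\beta$ (by an auxiliary induction on the sum expression defining each element of $V_\beta$), and that the telescoping decomposition, after absorbing possibly empty summands, lands in one of the four permissible index types $\omega$, $\omega^*$, $\zeta$, or finite $n$.
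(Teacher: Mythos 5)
The paper offers no proof of this statement: it is quoted as Hausdorff's theorem with a citation to Rosenstein, so there is nothing internal to compare against. Your argument is the standard proof via the iterated finite condensation (essentially the one in Rosenstein), and it is correct. The easy direction is fine: countability passes through sums, and the convexity argument (either some $\phi^{-1}(L_i)$ has two points and hence contains a copy of $\eta$ inside a scattered $L_i$, or $\phi$ descends to an embedding of $\eta$ into the scattered index set $I$) is exactly right. For the converse, the three ingredients you need are all present: stabilization of the chain $(\sim_\alpha)$ below $\omega_1$ because a strictly increasing $\omega_1$-chain of subsets of the countable set $L\times L$ is impossible; the observation that a stable nontrivial quotient is dense in itself, hence (being countable) embeds $\eta$, which lifts to $L$ via convexity of the classes; and the rank induction showing each class lies in $V$. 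You have also correctly isolated the two points that actually require work: that a linear order all of whose closed intervals are finite is one of $n$, $\omega$, $\omega^*$, $\zeta$ (embed it convexly into $\mathbb Z$ by counting from a base point), and, at limit stages, that convex suborderings of members of $V_\beta$ stay in $\bigcup_{\gamma\le\beta}V_\gamma$ (an auxiliary induction using the fact that convex subsets of $\omega$, $\omega^*$, $\zeta$, $n$ are again of one of these types), so that the $\zeta$-indexed telescoping $\cdots + D_1^- + D_0^- + C_0 + D_0^+ + D_1^+ + \cdots$ witnesses membership in $V_\lambda$. I see no gap; this is a complete and standard proof of the theorem the paper takes as given.
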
 

Thus Lemma \ref{1.2} tells us that every ordinal is elementarily equivalent to an ordinal with Hausdorff rank at most $\omega + 1$.

In terms of Hausdorff rank, we may view $\omega$ and $\omega^*$ as the simplest (infinite) scattered orderings, and ones built up from these and finite orders by $\omega$- (or $\omega^*$-) sums as the next 
most complicated. Even here, we have $2^{\aleph_0}$ elementarily inequivalent examples, as is shown in the following well-known result. We give two other examples of such families later in the paper, in 
Corollary \ref{3.7} for (infinite) monomials, and in Theorem \ref{4.5} for $\omega$-sums of powers of $\omega$ or $\omega^*$. 

\begin{theorem} There are $2^{\aleph_0}$ distinct equivalence classes of countable scattered linear orderings modulo $\equiv$.
\label{2.3} \end{theorem}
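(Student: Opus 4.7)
The plan is to establish the two bounds separately. The upper bound is immediate: the language of linear orders is countable, so there are at most $2^{\aleph_0}$ complete first-order theories, hence at most $2^{\aleph_0}$ elementary equivalence classes. The task is therefore to exhibit $2^{\aleph_0}$ pairwise inequivalent countable scattered examples.

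For each non-empty $X \subseteq \{n \in \omega : n \ge 1\}$, I would set
$$L_X = \sum_{n \in X}(\omega + n + \omega^*),$$
with the outer sum indexed by $X$ in its natural ordering as a subset of $\omega$. Each summand is a countable scattered linear order and the index has order type at most $\omega$, so by Theorem \ref{2.2}, $L_X$ is countable and scattered. To separate the $L_X$ elementarily, for each $k \ge 1$ introduce the sentence $\phi_k$ asserting the existence of a \emph{maximal discrete chain of length exactly $k$}: writing $x \triangleleft y$ for the formula $x < y \wedge \neg \exists z(x < z < y)$ (so that $y$ is the immediate successor of $x$), take
$$\phi_k := \exists x_1 \cdots \exists x_k \bigl(\textstyle\bigwedge_{i<k} x_i \triangleleft x_{i+1} \wedge \neg\exists y(y \triangleleft x_1) \wedge \neg\exists y(x_k \triangleleft y)\bigr).$$
The core claim to prove is that $L_X \models \phi_k$ if and only if $k \in X$; this suffices, since it implies that distinct $X$ yield theories differing on some $\phi_k$.

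The forward direction of the claim is witnessed by the $k$-block inside the summand $\omega + k + \omega^*$: its leading $\omega$ has no maximum, so the first element of the $k$-block has no immediate predecessor; its trailing $\omega^*$ has no minimum, so the last element has no immediate successor. The main obstacle is the reverse direction, where one must verify by a careful case analysis that no other finite maximal discrete chain exists in $L_X$. The subtle point is that between consecutive summands the immediate-successor link from the maximum of one $\omega^*$ to the minimum of the next $\omega$ produces a bi-infinite $\zeta$-shaped discrete block, which a priori might contain finite sub-chains satisfying $\phi_k$; one must check that any maximal discrete chain passing through such a boundary extends infinitely in at least one direction and so has no endpoint of the required form. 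Analogous arguments handle the leading $\omega$ of the first summand and, when $X$ is finite, the trailing $\omega^*$ of the last summand. Once this is done, $\{k \ge 1 : L_X \models \phi_k\} = X$, so the $2^{\aleph_0}$ choices of $X$ furnish $2^{\aleph_0}$ pairwise inequivalent countable scattered orders, matching the upper bound and completing the proof.
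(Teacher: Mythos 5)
Your proposal is correct and is essentially the paper's own argument: both code a subset $X$ of $\mathbb{N}$ into an $\omega$-indexed sum of $\zeta$-like blocks so that the finite maximal discrete chains occurring in the resulting order have exactly the lengths prescribed by $X$ (the paper uses $M_n(X) = \omega^* + \omega + n + 2$ for $n \in X$ and $\omega^* + \omega$ otherwise, summed over all $n < \omega$; you sum $\omega + n + \omega^*$ over $n \in X$), and both rest on the same verification that the junction blocks, the leading $\omega$, and any trailing $\omega^*$ yield only discrete chains that are infinite or lack a free endpoint. The only real difference is presentational: the paper packages the separation as an explicit $(n+4)$-move winning strategy for player I, whereas you express the same invariant by the sentences $\phi_k$ and add the routine upper bound that the paper leaves implicit.
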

\begin{proof} For each $X \subseteq \mathbb N$ we shall find $L(X)$ so that if $X \ne Y$, then $L(X) \not \equiv L(Y)$. We let $M_n(X)$ be of order-type $\omega^\ast + \omega + n + 2$ if $n \in X$ and of 
order-type $\omega^\ast + \omega$ if $n \not \in X$; and let $L(X)= \sum\{M_n(X): n < \omega\}$, which is obtained by concatenating $M_0(X), M_1(X), M_2(X), \ldots$ in that order. Let 
$X, Y \subseteq \mathbb N$ be such that $X \neq Y$. Let $n \in X \setminus Y$ (if $X \setminus Y \neq \emptyset$, or $n \in Y \setminus X$ otherwise). Then player I has a winning strategy in the 
$(n + 4)$-move game on $L(X)$ and $L(Y)$. For player I chooses in order the final $n+2$ elements of $M_n(X)$ for his first $n + 2$ turns. He then considers player II's $n + 2$ responses 
$b_1, b_2, \ldots ,b_{n+2}$ in $L(Y)$. If, for some $i$, there is an element of $L(Y)$ between $b_i$ and $b_{i+1}$ then player I chooses that element on his $(n+3)$rd turn and player II loses 
immediately. Otherwise, since there are no maximal discrete sequences of exactly $n+2$ elements in $L(Y)$, either $b_{n+2}$ has an immediate successor, or $b_1$ has an immediate predecessor in $L(Y)$. 
Player I chooses such an element $b_{n+3}$; suppose without loss of generality, that $b_{n+2} < b_{n+3}$. Then player II must choose an element $a_{n+3}$ of $L(X)$ beyond the last point $a_{n+2}$ of 
$M_n(X)$. But then player I chooses an element between $a_{n+2}$ and $a_{n+3}$, and as there is no element between $b_{n+2}$ and $b_{n+3}$, player II loses. This therefore provides a winning strategy 
for I. Thus if $X$ and $Y$ are distinct subsets of $\mathbb N$, then $L(X) \not \equiv L(Y)$, so there are $2^{\aleph_0}$ distinct equivalence classes of countable scattered linear orderings modulo 
$\equiv$.         \end{proof}

\vspace{.1in}

The next result shows why we insist that products are taken over well-ordered families (under the anti-lexicographic order).

\begin{lemma} \label{2.4} For any non-well-ordered set I and linear orders $\alpha_i > 1$ for all $i \in I$, $\prod_{i \in I} \alpha_i$ is not scattered.  \end{lemma}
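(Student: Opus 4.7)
The approach is to embed the order-type $\eta$ of $\mathbb Q$ into $\prod_{i \in I}\alpha_i$, which shows directly that the product is not scattered. Since $I$ is not well-ordered, it contains a strictly decreasing sequence $i_0 > i_1 > i_2 > \cdots$. The plan is to restrict attention to elements of the product supported within $\{i_k : k \geq 0\}$; under the anti-lexicographic order these are compared by the value at their largest coordinate of difference in $I$, which is the \emph{smallest} index $k$ of difference, so I expect a copy of the ``most significant bit first'' dyadic order to appear.

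First I pick, for each $k$, some $v_k \in \alpha_{i_k}$ with $v_k \ne 0_{i_k}$ (this exists since $\alpha_{i_k} > 1$). By the pigeonhole principle, after thinning the sequence I may assume that either $v_k > 0_{i_k}$ for every $k$ or $v_k < 0_{i_k}$ for every $k$; since $\eta \cong \eta^\ast$ and reversal preserves scatteredness (Lemma \ref{1.5}), it suffices to treat the first case. I then define $\varphi$ from the set of finite subsets of $\mathbb N$ into $\prod_{i \in I}\alpha_i$ by setting $\varphi(A)(i_k) = v_k$ when $k \in A$ and $\varphi(A)(i) = 0_i$ otherwise, and observe that each $\varphi(A)$ has finite support. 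For $A \ne B$ the set of coordinates where $\varphi(A)$ and $\varphi(B)$ differ is $\{i_k : k \in A \triangle B\}$, whose maximum in $I$ is $i_{k_0}$ with $k_0 = \min(A \triangle B)$; at $i_{k_0}$ one side equals $v_{k_0} > 0_{i_{k_0}}$ and the other equals $0_{i_{k_0}}$, so $\varphi(A) < \varphi(B)$ exactly when $k_0 \in B$.

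This shows $\varphi$ is an order-embedding of the finite subsets of $\mathbb N$, equipped with the order $A \prec B \iff \min(A \triangle B) \in B$, into $\prod_{i \in I}\alpha_i$. Finally, the map $A \mapsto \sum_{k \in A} 2^{-k-1}$ identifies $(\mathcal{P}_{\mathrm{fin}}(\mathbb N),\prec)$ with the set of dyadic rationals in $[0,1)$; restricted to $(0,1)$ this is a countable dense linear order without endpoints, hence a copy of $\eta$. The main obstacle is keeping straight the reversal between $\mathbb N$ (indexed in increasing $k$) and the subsequence in $I$ (where larger $i_k$ corresponds to smaller $k$), which is exactly why a dense order appears rather than a discrete one; a secondary technicality is the pigeonhole step needed to arrange all the $v_k$ on the same side of the chosen default $0_{i_k}$.
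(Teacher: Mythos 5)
Your proof is correct, and it reaches the conclusion by a recognisably different route from the paper's. The paper also restricts to elements supported in an $\omega^*$-ordered subset $I'$ of $I$, but it then proves only that this set is \emph{densely ordered}: given $f<g$ it picks $s \in I'$ below the last coordinate of difference and outside both supports, and inserts $h$ with $f<h<g$ by perturbing one of $f,g$ at $s$ (with a local case split according to whether $\alpha_s$ has an element above or below $0$). That argument leaves implicit the standard fact that an infinite densely ordered set embeds $\eta$. You instead build an explicit copy of $\eta$: after a pigeonhole normalisation putting all the chosen $v_k$ on the same side of the default values, you embed $(\mathcal{P}_{\mathrm{fin}}(\mathbb N),\prec)$ with the ``first difference'' order, and identify it with the dyadic rationals in $[0,1)$. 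Both proofs exploit the same underlying phenomenon --- under the last-difference order over a reversed $\omega$-sequence of coordinates, comparisons are decided at the \emph{least} index of difference, so one can always interpolate at lower indices --- but your version is fully constructive and self-contained, at the cost of the pigeonhole step and the appeal to $\eta \cong \eta^*$ for the ``all $v_k$ below $0$'' case (both of which are handled correctly; the paper avoids them by making the choice of side locally, at each insertion, rather than globally). Your computation that $A \mapsto \sum_{k\in A}2^{-k-1}$ is an order isomorphism onto the dyadic rationals is right, using that the tail $\sum_{k>k_0,\,k\in A}2^{-k-1}$ of a \emph{finite} set is strictly less than $2^{-k_0-1}$.
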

\begin{proof} Let $\beta = \prod_{i \in I} \alpha_i$. We look at $\beta$ as a linear ordering of functions as defined above. Since $I$ is not well-ordered, it has an infinite subset $I'$ ordered in type 
$\omega^*$ and we show that the set $X$ of elements of $\beta$ whose support is contained in $I'$ is densely ordered. Let $f, g \in X$ be such that $f < g$. Let $t \in I$ be such that for $i > t$, 
$f(i) = g(i)$ and $f(t) < g(t)$. Since $f(t) \ne g(t)$, $t \in I'$ so there is $s < t $ in $I'$ outside the union of the supports of $f$ and $g$.

In the first case there is $a > 0$ in $\alpha_s$, in which case we let 
 \begin{displaymath}
                   h(i) = \left\{ \begin{array}{ll} 
                  f(i) & \textrm{if $i \ne s$}\\
                   a & \textrm{if $i = s$}\\
                   \end{array} \right.
\end{displaymath}

Then the greatest (only) point at which $f$ and $h$ differ is $s$, and since $f(s)=0<a=h(s)$, we have $f<h$. Since $t$ is the greatest point at which $h$ and $g$ differ and $h(t) = f(t) < g(t)$,  also 
$h < g$. If 0 is the greatest in $\alpha_s$ (for example if $\alpha_s = \omega^\ast$) then pick $a < 0$ in $\alpha_s$ and instead let 

\begin{displaymath}
                   h(i) = \left\{ \begin{array}{ll} 
                  g(i) & \textrm{if $i \ne s$}\\
                   a & \textrm{if $i = s$}\\
                   \end{array} \right.                       
\end{displaymath}
and once more, $f < h < g$.
\end{proof}

\vspace{.1in}

Although $\zeta$ is allowed as a term in the definition of `monomial', we now show that its role is insignificant.

\begin{lemma} \label{2.5} (i) For any $B \neq \emptyset$, $\zeta \cdot B \equiv \zeta$ and $\omega \equiv \omega + \zeta \cdot B$. 

(ii) For any orderings $A$ and $B$ for which $A$ has a last element, $\omega \cdot A + \zeta \cdot B \equiv \omega \cdot A$. \end{lemma}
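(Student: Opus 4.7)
The plan is to establish (i) first and reduce (ii) to it. For (ii), the hypothesis that $A$ has a last element lets us split off the final copy of $\omega$ and write $\omega \cdot A = \omega \cdot A' + \omega$, so that
\[
\omega \cdot A + \zeta \cdot B = \omega \cdot A' + (\omega + \zeta \cdot B).
\]
The second statement of (i) says $\omega + \zeta \cdot B \equiv \omega$, so Lemma \ref{1.1}(i) yields $\omega \cdot A + \zeta \cdot B \equiv \omega \cdot A' + \omega = \omega \cdot A$.

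For (i), I would prove the two special cases $\zeta \cdot B \equiv \zeta$ and $\omega \equiv \omega + \zeta$ directly via Ehrenfeucht--Fra\"iss\'e arguments. The second half of (i) then follows by chaining: $\omega + \zeta \cdot B \equiv \omega + \zeta$ by Lemma \ref{1.1}(i) applied to the first equivalence, and $\omega + \zeta \equiv \omega$ by the second.

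For $\zeta \cdot B \equiv \zeta$, player II maintains the following invariant after $k$ moves of $G_n(\zeta, \zeta \cdot B)$: the map $a_i \mapsto b_i$ is order-preserving, and for every pair $i \ne j$ either the discrete distances agree with $d(a_i, a_j) = d(b_i, b_j) \le 3^{n-k}$, or both distances exceed $3^{n-k}$. Here $d$ counts the number of points strictly between, taken to be infinite for points lying in distinct $\zeta$-blocks of $\zeta \cdot B$. When I plays within threshold distance of some previously chosen point, II's response is essentially forced and the invariant at the reduced threshold survives by the triangle inequality; when I plays outside every threshold, II picks a response that is similarly far from all existing ones, which is always possible because both orderings are unbounded in both directions and $\zeta \cdot B$ (being nonempty) has further $\zeta$-blocks available. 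The strategy for $\omega \equiv \omega + \zeta$ is analogous, with the additional bookkeeping of distances from the minimum element at the same threshold; an element of the $\zeta$-tail of $\omega + \zeta$ is matched by II with a point of $\omega$ lying beyond the current threshold from $0$ and from every other prior response, which is feasible since $\omega$ is unbounded above.

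The main obstacle in both strategies is that a single move of I may impose several simultaneous distance constraints on II's reply, so one must verify that reducing the threshold at each move still leaves enough separation on both sides to interpolate a legal response. This is the standard distance-reduction bookkeeping for EF games on discrete orderings; since there are only $n$ moves and the threshold begins at $3^n$, it remains large enough throughout for II to find a valid reply.
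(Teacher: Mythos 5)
Your proposal is correct and follows essentially the same route as the paper: part (i) is proved by the standard interval-length threshold strategy for player II (the paper uses thresholds $2^{n-r}-1$ with intervals "equal or both large", yours uses $3^{n-k}$; the chaining through $\zeta\cdot B\equiv\zeta$ and $\omega+\zeta\equiv\omega$ is an immaterial reorganisation of the paper's direct treatment of $\omega+\zeta\cdot B$), and part (ii) is reduced to (i) by exactly the paper's decomposition $A=C+1$, $\omega\cdot A=\omega\cdot C+\omega$, together with Lemma \ref{1.1}.
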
  

\begin{proof} (i) Take any $n \ge 1$ and we show that $\zeta \cdot B \equiv_n \zeta$. Since $\zeta$ is infinite `in both directions', player II may play in such a way that if $a_1 < a_2 < \ldots < a_r$ and 
$b_1 < b_2 < \ldots < b_r$ are the first $r$ moves by the two players in $\zeta \cdot B$ and $\zeta$, so that for each $j$, $a_j$ and $b_j$ are the moves played on the same move (not necessarily the $j$th), 
then for $1 \le j < r$, $|(a_j, a_{j+1})|$ and $|(b_j, b_{j+1})|$ are either equal, or are both $\ge 2^{n-r} - 1$. (Of course, $(-\infty, a_1)$, $(-\infty, b_1)$ , $(a_r, \infty)$, $(b_r, \infty)$ are all 
infinite.) It is easy for player II to play so that this statement is true (since the required minimum length of each interval is essentially halved at each stage), and when $r = n-1$, there is still a 
point available for him to play. 

For $\omega$ and $\omega + \zeta \cdot B$, in $n$ moves, II follows a similar strategy, in addition ensuring that $|(-\infty, a_1)|$ and $|(-\infty, b_1)|$ are either equal, or both $\ge 2^{n-r} - 1$.

(ii) Write $X = \omega \cdot A + \zeta \cdot B$, $Y = \omega \cdot A$, and $A = C + 1$. Then $X = \omega(C + 1) + \zeta \cdot B$ and $Y = \omega(C + 1)$. Thus 
$X = \omega \cdot C + \omega + \zeta \cdot B$ and $Y = \omega \cdot C + \omega$, and the result follows from (i) (and Lemma \ref{1.1}). 
\end{proof}

\vspace{.1in}

We note that part (i) applies even if $B$ is not scattered, though in this paper we restrict to the scattered case. We also note that (ii) may be false without the assumption that $A$ has a last element. 
For instance, $\omega^2 + \zeta \not \equiv_4 \omega^2$. For on his first move, I plays $a_1$ in $\zeta$. Let $b_1 \in \omega^2$ be II's reply. Now I plays the first point $b_2$ of the copy of $\omega$ 
greater than $b_1$, and II's reply $a_2$ must be in $\zeta$. Now I plays the predecessor $a_3$ of $a_2$ in $\zeta$; let $b_3 \in [b_1, b_2)$ be II's reply. Since $b_2$ is a limit ordinal, I may now play in
$(b_3, b_2)$, and II cannot respond. 

\vspace{.1in}

In any monomial, we group together like terms, so that we can alternatively define a monomial to be a (restricted, anti-lexicographic) well-ordered product of orders of the form $\omega^\alpha$, 
$(\omega^*)^\beta$, $\zeta^\gamma$, $m^\delta$ for $m \in \omega$, $m \ge 2$, and non-zero ordinals $\alpha$, $\beta$, $\gamma$, $\delta$, no two consecutive ones of the same type, and these are then 
referred to as the {\em terms} of the monomial. Any monomial of interest to us will fall into one of three categories, namely a \textit{finite monomial product}, an \textit{$\omega$-monomial product} or a 
\textit{transfinite monomial product} (where the last one is indexed by an ordinal greater than $\omega$). We refer to all finite ordinals as \textit{trivial} monomials. For example, 
$\omega^2 \cdot \omega^{\ast^2} \cdot \omega^3 \cdot \zeta \cdot 4$ is a finite monomial product and 
$(\omega^\ast)^2 \cdot \omega^5 \cdot \zeta \cdot (\omega^\ast)^2 \cdot \omega^5 \cdot \zeta \cdots \hspace{0.1in} \omega^2 \cdot \zeta^3 \cdots$ is a transfinite monomial product. We refer to 
$\alpha, \beta$ and $\gamma$ as the \emph{exponents} of the terms $\omega^\alpha, \omega^\beta$, and $\zeta^\gamma$ respectively. A monomial in which $\zeta$ appears is called a {\em $\zeta$-product}.

\begin{corollary} \label{2.6} Any $\zeta$-product is elementarily equivalent to one of the form $A \cdot \zeta$, where $\zeta$ does not occur in $A$. \end{corollary}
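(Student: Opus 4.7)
The plan is to identify the earliest $\zeta$-factor in the given $\zeta$-product $M$ and to show that everything from that point on collapses to $\zeta$ via Lemma~\ref{2.5}(i). The result then follows by replacing this tail and invoking Lemma~\ref{1.1}.

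Concretely, represent $M$ as an anti-lexicographic product $M = T_0 \cdot T_1 \cdots$ over a well-ordered index set, with each $T_j$ of the grouped form $\omega^\alpha$, $(\omega^\ast)^\beta$, $\zeta^\gamma$, or $m^\delta$. Because $M$ is a $\zeta$-product, the set $\{j : T_j = \zeta^{\gamma_j}\}$ is non-empty and, by well-ordering, has a least element $i$. I would then split $M = A \cdot B$ with $A = T_0 \cdots T_{i-1}$ and $B = T_i \cdot T_{i+1} \cdots$; by the minimality of $i$ no $\zeta$ appears in $A$ (and $A$ is trivial if $i = 0$). It suffices, by Lemma~\ref{1.1}, to prove that $B \equiv \zeta$. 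Writing $C = T_{i+1} \cdot T_{i+2} \cdots$ for the (possibly trivial) remainder, $B = \zeta^{\gamma_i} \cdot C$. If $\gamma_i = 1$ and $C$ is trivial then $B = \zeta$ and we are done. Otherwise I use the identity $\zeta^{\gamma_i} = \zeta \cdot \zeta^{\delta}$, where $\delta$ satisfies $1 + \delta = \gamma_i$ (so $\delta = \gamma_i - 1 \ge 1$ if $\gamma_i$ is a finite integer $\ge 2$, and $\delta = \gamma_i$ if $\gamma_i$ is infinite). This rewrites $B = \zeta \cdot (\zeta^\delta \cdot C)$, and in each remaining case the bracketed factor $\zeta^\delta \cdot C$ is non-empty, so Lemma~\ref{2.5}(i) yields $B \equiv \zeta$. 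Combined with Lemma~\ref{1.1}, this gives $M = A \cdot B \equiv A \cdot \zeta$, as required.

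The only matters needing care are the two structural identities invoked: the splitting $M = (T_0 \cdots T_{i-1}) \cdot (T_i \cdot T_{i+1} \cdots)$, which is immediate from the finite-support definition of the anti-lexicographic product (each element decomposes uniquely into its restrictions to positions $< i$ and $\ge i$, and the last-difference ordering respects that decomposition); and the equality $\zeta^{1 + \delta} = \zeta \cdot \zeta^{\delta}$, which is equally immediate from the definition of restricted anti-lexicographic power. Neither is a real obstacle, and the substance of the proof is the single application of Lemma~\ref{2.5}(i) that absorbs the entire tail past the first $\zeta$-term.
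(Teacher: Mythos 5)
Your proposal is correct and follows essentially the same route as the paper: isolate the first occurrence of $\zeta$, write the product as $A \cdot (\zeta \cdot C)$ with $\zeta$ absent from $A$, collapse the tail via Lemma \ref{2.5}(i), and conclude with Lemma \ref{1.1}. Your extra care over the grouped exponent $\zeta^{\gamma_i}$ and the edge cases is a harmless refinement of the same argument.
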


\begin{proof} Let $A$ be a monomial in which $\zeta$ occurs. By considering the first occurrence of $\zeta$, we may write $A = A_1 \cdot \zeta \cdot A_2$, where $\zeta$ does not occur in $A_1$. By
Lemma \ref{2.5}, $\zeta \cdot A_2 \equiv \zeta$, so $A \equiv A_1 \cdot \zeta$. \end{proof}

\vspace{.1in}

Now there are various standard elementary equivalences which are well known or easily established, such as $\zeta \equiv \zeta \cdot B$ given in Lemma \ref{2.5}. Since we are focussing on optimal 
representatives for countable scattered linear orders, these standard equivalences lead us to rule out certain monomials from consideration (and a full treatment would reduce a general monomial to one of 
the simplified ones in our list). We do not give a completely precise definition of {\em simple}, or {\em optimal} form. The idea is that it should at least be a canonical representative of an equivalence 
class under $\equiv$ (or some $\equiv_n$), and should be of least possible Hausdorff rank. If it exists, then it should be possible to recognize it as such, but it might be hard to show that it does 
{\em not} exist. The principal restrictions which seem to be required are now given. We let $\Sigma$ be the family of monomials $A$ satisfying the following:

\begin{enumerate}
\item[(a)] No finite ordinal appears as a term in $A$ except on the extreme right. 

(This is because, if $n \neq 0$ is finite and $A$ is any non-trivial monomial, then $n \cdot A \equiv A$, since $n \cdot \omega = \omega$, $n \cdot \omega^\ast = \omega^\ast$ and $n \cdot \zeta = \zeta$.) 

\item[(b)] If $\omega^\alpha$ or $(\omega^*)^\alpha$ is a term of $A$, then $\alpha \le \omega$ (by Lemma \ref{1.2}). 
 
\item[(c)] Any $\zeta$-product has the form $A \cdot \zeta$ where $\zeta$ does not appear in $A$. 
\end{enumerate}

The above discussion shows that our first main task is to classify the restricted monomials that involve only products of $(\omega^\ast)^{\alpha}$, $\omega^{\beta}$ and $n$ where $n$ appears as the last 
term of the monomial (we exclude $\zeta$ since it can increase rank by at most 1).

Finally in this section we give some basic lemmas about $n$-equivalence. The following result gives finer detail about the level of elementary equivalence, from which Lemma \ref{1.2} can be deduced, and is 
a slightly stronger version of the corresponding result of Mostowski and Tarski quoted in \cite{mwesigye3}.

\begin{lemma} For any integer $n > 0$ and non-empty linear orders $X$ and $Y$, such that either $X$ and $Y$ both have a least element, or neither do,

{\rm (i)} $\omega^n \cdot X \equiv_{2n} \omega^n \cdot Y$,

{\rm (ii)} $\omega^n \not \equiv_{2n+1} \omega^n \cdot X$ if $|X| > 1$.
\label{2.7}   \end{lemma}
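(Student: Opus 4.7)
I plan to prove both parts by induction on $n$ via explicit Ehrenfeucht-Fra\"iss\'e strategies.

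For (i), I would exhibit a winning strategy for player II in $G_{2n}(\omega^n \cdot X, \omega^n \cdot Y)$. View each structure as a family of $\omega^n$-``blocks'' indexed by $X$ or $Y$. Player II mirrors I's within-block position exactly, and chooses her answering block in $Y$ so as to preserve the order of the blocks chosen so far and to match ``the smallest chosen block is the least of $Y$'' against the corresponding statement in $X$. With this rule the interval between any two consecutive chosen points is either a sub-interval of a single $\omega^n$-block (matched exactly) or a concatenation of whole blocks of the form $\omega^n \cdot Z$, non-empty on one side if and only if on the other. By Lemma~\ref{1.3}(i) any non-empty $\omega^n \cdot Z$ satisfies $\omega^n \cdot Z \equiv_{2n} \omega^n$, and Lemma~\ref{1.1} propagates the matching to the full sum. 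The base case $n = 1$ invokes the hypothesis on least elements directly to set up the very first block match.

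For (ii), I would give player I's winning strategy in $G_{2n+1}(\omega^n, \omega^n \cdot X)$. Pick $x_1 < x_2$ in $X$, which is possible since $|X| > 1$. On move 1, player I plays $a_1 = (x_2, 0) \in \omega^n \cdot X$; this $a_1$ is the supremum from below of an $\omega^n$-sequence (lying in the $x_1$-copy of $\omega^n$), whereas any $b_1 \in \omega^n$ has $(-\infty, b_1)$ an ordinal strictly less than $\omega^n$, so the top exponent of its Cantor normal form is at most $n - 1$. In the remaining $2n$ moves player I drills through that Cantor normal form: on each successive pair of moves he plays in $\omega^n$ the value obtained from the current $b$-position by removing its smallest summand, forcing player II's reply $a_{2r}$ into an interval of strictly smaller ``left-limit depth'', and then mirrors with an $a$-move in a final $\omega^n$-segment of the current interval. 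After $n$ such rounds the depth available to player II in $\omega^n$ is exhausted, and a final move by player I in a non-empty $a$-interval exhibits a point whose matching $b$-interval is empty.

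The main obstacle will be the precise move counting in (ii), since a naive appeal to Lemma~\ref{1.4} applied to $(-\infty, a_1)$ and $(-\infty, b_1)$ would demand $2n + 1$ further moves rather than the $2n$ that remain; the saving must come from the choice of $a_1$ being one level of limit depth deeper than anything in $\omega^n$. A secondary difficulty is that $(-\infty, a_1) = \omega^n \cdot X_{<x_2}$ is in general not an ordinal, so player I must commit to playing his $a$-moves inside a single convex $\omega^n$-copy to keep the Mostowski-Tarski-style drilling of Lemma~\ref{1.3}(ii) and Lemma~\ref{1.4} applicable.
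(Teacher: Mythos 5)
Your part (ii) is essentially the paper's argument: player I opens with the bottom point of a non-initial $\omega^n$-block, whose left neighbourhood has limit depth $n$ while every point of $\omega^n$ has depth at most $n-1$, and then descends through the Cantor normal form. The counting worry you raise is resolved in the paper by recursing on the exponent rather than peeling off summands one at a time: after two moves the position is $\omega^k$ against $\omega^k\cdot V$ with $|V|>1$ and $k\le n-1$, so the induction hypothesis for part (ii) finishes in $2k+1$ further moves, giving $2+(2k+1)\le 2n+1$ in total. Your sketch would close the same way once you commit to that form of the recursion.

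Part (i), however, has a genuine gap. The invariant you claim --- that every interval between consecutive chosen points is either an exactly mirrored piece of a single $\omega^n$-block or a union of whole blocks $\omega^n\cdot Z$, non-empty on one side if and only if on the other --- cannot be maintained. Already very early in the game player I can move in a block of $Y$ for which no order-compatible \emph{new} block of $X$ exists (take $X=1$, $Y=2$, or more generally force II into a block already containing a played point); II is then compelled to answer inside an old block, and the resulting pair of intervals is a tail of a block against a tail of a block followed by further whole blocks, which is neither of your two cases. Repairing this requires showing that such mismatched intervals are $(2n-r)$-equivalent, which is essentially the statement being proved, so the argument as written is circular or incomplete. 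Moreover the appeal to Lemma \ref{1.3}(i) for $\omega^n\cdot Z\equiv_{2n}\omega^n$ is not available: that lemma is stated for \emph{ordinals} $\beta$, whereas your $Z$ is an arbitrary convex set of indices, possibly without a least element; the paper explicitly flags this as the reason it cannot simply quote the Mostowski--Tarski result. The paper's proof avoids all of this by inducting with blocks of size $\omega$ rather than $\omega^n$: writing $\omega^{n+1}\cdot X=\omega\cdot(\omega^n\cdot X)$, the block game is played on index sets that are $2n$-equivalent by the induction hypothesis, so the mirroring can only fail in the last two moves, and those are absorbed by the $n=1$ base case applied to the surviving intervals $\omega+k$ versus $\omega+\omega\cdot Z+k$. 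You need some version of that one-factor-at-a-time decomposition for (i) to go through.
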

\begin{proof} Note that we are just assuming that $X$ and $Y$ are linearly ordered, not necessarily well-ordered, so we cannot deduce the result from the one in \cite{mwesigye3} about ordinals.

We prove (i) by induction. If $n = 1$, player II may play so that I plays the least point of one of the orders if and only if II plays the least point of the other order, and then II can respond to 
any second move by I. (This shows why the hypotheses on $X$ and $Y$ are required; if $X$ has a least element and $Y$ does not, player I can win immediately by playing the first point of 
$\omega \cdot X$.) For the induction step, assuming the result for $n$, let $A = \omega^{n+1} \cdot X$, $B = \omega^{n+1} \cdot Y$, $A' = \omega^n \cdot X$, $B' = \omega^n \cdot Y$, so that 
$A = \omega \cdot A'$ and $B = \omega \cdot B'$. By induction hypothesis, $A' \equiv_{2n} B'$, so II has a winning strategy $\sigma$ here. In the $2(n+1)$-move game on $A$ and $B$, he follows $\sigma$ `on 
the copies of $\omega$', making sure that on each move he plays the member of the current copy of $\omega$ corresponding to what I played.

Consider the first point at which II is unable to play the member of $\omega$ corresponding to what I has played. This can only happen in the last two moves. Without loss of generality, suppose that 
I's move $a$ lies in $A$, and let $a^- < a < a^+$ where $a^-$, $a^+$ are the closest points of $A$ to $a$ already played on left and right respectively ($a^- = - \infty$ or $a^+ = \infty$ are allowed if 
there are none such), and let $b^-$, $b^+$ be the corresponding points of $B$. Since II cannot respond, $b^-$ and $b^+$ must lie in consecutive copies of $\omega$ (if there was another copy of $\omega$ in 
between, he could choose a suitable member of it; if $b^-$ and $b^+$ are in the same copy, then so are $a^-$ and $a^+$, hence also $a$, and now II can certainly play the point corresponding to $a$). Hence 
$(b^-, b^+) \cong \omega + k$ for some finite $k$, and $(a^-, a^+) \cong \omega + \omega \cdot Z + k$ for some linear order $Z$ (since the argument just given shows that $a^-$ and $a^+$ lie in distinct 
copies). By the basis case, $\omega \equiv_2 \omega + \omega \cdot Z$, so also $\omega + k \equiv_2 \omega + \omega \cdot Z + k$, and II can play successfully for the last 2 moves.

(ii) is also proved by induction. On his first move, I plays the least member $b_1$ of a copy of $\omega^n$ (but not the first) in $\omega^n \cdot X$. Let $a_1 \in \omega^n$ be II's response. If 
$a_1 = 0$, then I wins at once by playing to the left of his first move in $\omega^n \times X$. Otherwise he plays $a_2 < a_1$ in $\omega^n$ so that $[a_2, a_1) \cong \omega^k$ for some $k$, which must be 
$< n$ (it is possible that $k = 0$, as must happen for instance in the basis case, $n = 1$). Let II's second move in $\omega^n \times X$ be $b_2$. Then $(b_2, b_1) \cong \omega^n \times Z$ for
some non-empty $Z$. If $(a_2, a_1) = \emptyset$ (that is, $k = 0$) then I wins on the third move by playing in between $b_2$ and $b_1$ in $B$. Otherwise $k > 0$, and $(a_2, a_1) \cong \omega^k$, and by 
induction hypothesis, I can win in the remaining $2n - 1$ moves by playing between $a_2$ and $a_1$ in $A$, and between $b_2$ and $b_1$ in $B$, since $2k + 1 \le 2n - 1$.  \end{proof}

\vspace{.1in}

\begin{definition} \label{2.8} Let $A$ and $B$ be elementarily inequivalent linear orderings and $n$ be a non-zero natural number. Then $n$ is the \emph{optimal length of elementary equivalence} of $A$ and 
$B$ if $A \equiv_n B$ and $A \not \equiv_{n + 1} B$. \end{definition}

In this case, we shall refer to a strategy that enables player I to win in $n + 1$ moves as an \textit{optimal strategy} for 
player I, and a strategy that enables player II to win in $n$ moves as an \textit{optimal strategy} for player II. We 
denote the optimal length of elementary equivalence of a game on $A$ and $B$ by $l(A,B)$. We remark that $l(A,B)$ is a unique 
non-zero natural number unless $A$ and $B$ are elementarily equivalent (when we could write $l(A,B) = \infty$).
 
\begin{lemma}[2-phase lemma] \label{2.9} Let $A'$, $B'$ be non-empty linear orders, $m$ a positive integer, and $A = \omega^m \cdot A'$, $B = \omega^m \cdot B'$, such that $l(A', B') = n$. 

(i) If $A'$, $B'$ either both have a least element, or neither does, then $l(A, B) = 2m + n$.

(ii) If one of $A'$, $B'$ has a least element but the other does not, then $l(A, B) = 1$.
\end{lemma}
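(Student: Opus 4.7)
The plan for part (ii) is to use the observation that $\omega^m \cdot L$ has a least element if and only if $L$ does (since the first element, if any, lives at the bottom of the first $\omega^m$-block). Under the hypothesis, exactly one of $A$, $B$ has a least element; say WLOG $A$ does but $B$ does not. Player I will win the 2-move game by playing the least element of $A$: after II's response $b \in B$, I picks any $b' < b$, to which II has no reply. Hence $A \not\equiv_2 B$, while $A \equiv_1 B$ is trivial, so $l(A, B) = 1$.

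For part (i), my plan is to induct on $m$, with $m = 0$ trivial. For the induction step, I will write $\omega^{m+1} \cdot A' = \omega \cdot (\omega^m \cdot A')$ and invoke the induction hypothesis $l(\omega^m \cdot A', \omega^m \cdot B') = 2m + n$ (the least-element condition being preserved by the observation used in part (ii)). This reduces the task to the $m = 1$ case: $l(\omega \cdot X, \omega \cdot Y) = l(X, Y) + 2$ whenever $X$, $Y$ satisfy the least-element condition.

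For the upper bound $\omega \cdot X \equiv_{k+2} \omega \cdot Y$ (assuming $X \equiv_k Y$), I will adapt the proof of Lemma \ref{2.7}(i). Player II fixes a winning strategy $\tau$ in $G_k(X, Y)$, views each move as a pair (copy-index in $X$ or $Y$, position-within-copy), plays $\tau$'s response on copy-indices, and plays the matching position within each copy of $\omega$. The only failure mode can occur in the last two moves, where a local interval between two already-played points has the form $\omega + l$ on one side and $\omega + \omega \cdot Z + l$ on the other; this is resolved by $\omega \equiv_2 \omega + \omega \cdot Z$, exactly as in the proof of Lemma \ref{2.7}(i). For the lower bound $\omega \cdot X \not\equiv_{k+3} \omega \cdot Y$ (assuming $X \not\equiv_{k+1} Y$), player I uses a winning strategy $\sigma'$ for I in $G_{k+1}(X, Y)$, playing the least element of each block that $\sigma'$ designates. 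If II always responds with the least element of her chosen block, the outer inconsistency that $\sigma'$ forces after $k+1$ moves translates directly to a play inconsistency (since order among the plays is determined by block order). If II instead plays a non-least element of some block while the matching $a$ is the least of its block, player I exploits the within-block asymmetry via a Lemma \ref{2.7}(ii)-style continuation, winning in at most two further moves.

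The hard part will be the failure analysis in the last two moves of the upper bound: I must verify that the local structural reduction to the basis case of Lemma \ref{2.7}(i) still works when $\tau$ is doing nontrivial work on the outer level, in particular that II's fallback move remains consistent with both the $\tau$-controlled copy correspondence and the within-copy matching already set up. In parallel, for the lower bound, a careful case analysis is required to show that all of II's possible deviations (non-least placement, repetition, unexpected block choice) can be converted into a win for I within the promised budget of $2m + n + 1$ moves.
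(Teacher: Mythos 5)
Your proposal is correct and follows essentially the same two-phase strategy as the paper's own proof: player I runs a block-level winning strategy by playing least points of copies and punishes any non-least reply with a Lemma \ref{2.7}(ii)-type continuation, while player II combines a block-level winning strategy with position-matching inside copies and repairs the last two moves via $\omega \equiv_2 \omega + \omega \cdot Z$, exactly as in the proof of Lemma \ref{2.7}(i). The only structural difference is that you induct on $m$ so as to work with $\omega$-blocks, whereas the paper treats $\omega^m$-blocks directly and invokes Lemma \ref{2.7} wholesale for the within-block phase; this is a cosmetic repackaging.
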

\begin{proof} Let $\sigma$ and $\tau$ be optimal strategies for players I and II in the $(n+1)$-move and $n$-move games respectively on $A'$ and $B'$.

We first give a winning strategy for player I in the $(2m + n + 1)$-move game on $A$ and $B$. For the first $n + 1$ moves, I plays according to $\sigma$ on the `copies' of $\omega^m$ in $A$, on each move 
playing the first point of the copy. Let's assume that player II plays so that he hasn't yet lost after these $n + 1$ moves. This means that the partial map determined by these $n + 1$ moves of the two 
players in $A$ and $B$ is order-preserving. Since $\sigma$ is a winning strategy, the map cannot be (strictly) order-preserving on the copies, and this means that player II must have played a point of a 
copy of $\omega^m$ which contains a point that has already been played. Therefore, player II must at some point in these first $n + 1$ moves have played a point which is not the first in its copy of 
$\omega^m$. Without loss of generality, suppose that the first such point $b$ played by II lies in $B$, and this is in response to $a \in A$ played by I. Then for some $b' < b$, and $k < m$, 
$[b', b) \cong \omega^k$. Player I now plays $b'$ as his next move. Let $a' \in A$ be II's reply. Then $a' < a$ and the interval $(a', a)$ has order-type of the form $\omega^m \cdot Z$ for some non-empty 
$Z$. If there is no point between $b'$ and $b$ (so that $k = 0$), I wins on the next move by playing in $A$ between $a'$ and $a$. Otherwise, $(b', b) \cong \omega^k$ and $(a', a) \cong \omega^m \cdot Z$, so 
by Lemma \ref{2.7}(ii), player I can win in the next $2k + 1$ moves, and $n + 2 + 2k + 1 \le n + 2 + 2m - 1 = 2m + n + 1$.

Next we note that if $A'$ has a least element and $B'$ does not then the same applies to $A$ and $B$, and I can win the game on $A'$ and $B'$, and also the game on $A$ and $B$, in 2 moves, by playing the 
least element of $A'$, or $A$ as the case may be, on his first move; since $B'$, $B$ have no least, whatever II plays on his first move, I wins by playing to its left. Thus actually 
$l(A', B') = l(A, B) = 1$, giving (ii). Similarly if $B'$ has a least but $A'$ does not. 

So now assume that $A'$, $B'$ either both have a least element, or neither does. Observe that we may assume that under $\tau$, I plays the first element of one of $A'$ and $B'$ (if it exists) if and only if 
II plays the first element of the other structure. Indeed, on all moves except the last, II {\em must} play thus, since otherwise he loses on the next move as I will play to the left of the non-first 
element. And on his last move, there can be no disadvantage for II in doing so.

We can now give a winning strategy for player II in the $(2m + n)$-move game on $A$ and $B$. For the first $n$ moves, player II uses $\tau$ on $A'$ and $B'$, playing the corresponding point within each copy 
to that played by I. There are now $2m$ moves remaining. It suffices to show that II can win the game restricted to $(a_1, a_2)$ in $A$ and $(b_1, b_2)$ in $B$, where $a_1$ and $a_2$ are consecutive points 
already played in $A$, and $b_1$ and $b_2$ are the corresponding points in $B$ (or one or other may be $\pm \infty$ if the other is the greatest or least so far played), in the remaining moves. In other 
words, we have to see that $(a_1, a_2) \equiv_{2m} (b_1, b_2)$. Note that for some ordinals $\alpha_1, \alpha_2 < \omega^m$, and $a_1', a_2' \in A'$, $b_1', b_2' \in B'$, $a_1 = (\alpha_1, a_1')$, 
$a_2 = (\alpha_2, a_2')$, $b_1 = (\alpha_1, b_1')$,  and $b_2 = (\alpha_2, b_2')$ (where these are now {\em ordered pairs}, and not intervals). Since $\tau$ is winning for II `on the blocks' in $n$ moves, $a_1' = a_2' \Leftrightarrow b_1' = b_2'$, and in this case, 
the intervals $(a_1, a_2)$ and $(b_1, b_2)$ are actually equal (to the same ordinal $\le \omega^m$), and therefore $(a_1, a_2) \equiv_{2m} (b_1, b_2)$. Otherwise, $a_1' < a_2'$ and $b_1' < b_2'$, and 
$(a_1, a_2)$ and $(b_1, b_2)$ are isomorphic to $\omega^m \times X + \alpha_2$, $\omega^m \times Y + \alpha_2$, for some non-empty $X$ and $Y$, both having a least element, and by Lemma \ref{2.7}(i), these 
are $2m$-equivalent. This analysis also applies to the case $a_1 = b_1 = -\infty$. Here by assumption on $\tau$, $a_2'$ is least in $A' \Leftrightarrow b_2'$ is least in $B'$, in which case 
$(a_1, a_2) \cong \alpha_2 \cong (b_1, b_2)$. Since $\omega^m$ has no greatest, the corresponding problem does not arise on the right.
\end{proof}

\section{\textbf{The class of monomials of countable scattered linear orderings}}

In this section we look at monomials of the forms

$M_0^{n_0} \cdot M_1^{n_1} \cdots M_{s-1}^{n_{s-1}}$,

$M_0^{n_0} \cdot M_1^{n_1} \cdot M_2^{n_2} \cdots$,

and $M_0^{n_0} \cdot M_1^{n_1} \cdots M_{\omega}^{n_{\omega}} \cdot M_{\omega+1}^{n_{\omega+1}} \cdots \hspace{0.1in} M_{\omega+t-1}^{n_{\omega+t-1}}$

\noindent (finite monomial, $\omega$-monomial, transfinite monomial respectively) where each $M_i$ is $\omega$ or $\omega^\ast$ and the two types alternate, and the $n_i$ are non-zero. Mostly we suppose 
that the two monomials we are comparing begin with the same one of $\omega$, $\omega^*$, as otherwise things are rather easy, as explained by the first result.

\begin{theorem} \label{3.1} Let $A = M_0^{n_0} \cdot M_1^{n_1} \cdot M_2^{n_2} \cdots M_{s-1}^{n_{s-1}}$ and $B = N_0^{n_0} \cdot N_1^{n_1} \cdots N_{t-1}^{n_{t-1}}$ where $s$, $t$, $m_i$, $n_j$ are positive integers, and $M_i$ alternate between $\omega$ and $\omega^*$ (or the other way round), and so do the $N_j$, and $M_0 = \omega \Leftrightarrow N_0 = \omega^*$. Then if $s = 1$ or $t = 1$ (or both), $l(A, B) = 1$, and otherwise, $l(A, B) = 2$.  \end{theorem}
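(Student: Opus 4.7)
I would split the argument based on whether $\min(s, t) = 1$ or $\min(s, t) \ge 2$, since these behave very differently.

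Suppose first that $s = 1$; the case $t = 1$ is symmetric. Then $A = M_0^{m_0}$ is a pure power of $\omega$ or of $\omega^*$, and so has either a first or a last element but not both. Because $M_0 = \omega \Leftrightarrow N_0 = \omega^*$, and because an anti-lexicographic product has a first (respectively last) element precisely when each of its factors does, the innermost factor $N_0$ already rules out the corresponding extremum in $B$, regardless of $t$. Player I therefore plays the extremum of $A$ on move 1; if $b_1 \in B$ is II's reply, then on move 2, I plays a point of $B$ on the side of $b_1$ where $A$ has no further points, which II cannot match. Together with the trivial 1-equivalence of any two nonempty linear orders, this gives $l(A, B) = 1$.

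Now assume $s, t \ge 2$. Then among the alternating sequences $M_0, \ldots, M_{s-1}$ and $N_0, \ldots, N_{t-1}$ both $\omega$ and $\omega^*$ occur, so neither $A$ nor $B$ has a first or a last element. Player II wins the 2-move game by playing any element on move 1 and using this two-sided unboundedness to match I's second choice on whichever side is required. Hence $A \equiv_2 B$, so $l(A, B) \ge 2$.

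It remains to show $A \not\equiv_3 B$. By Lemma \ref{1.5} we may assume $M_0 = \omega$ and $N_0 = \omega^*$ (otherwise reverse both structures). The crucial asymmetry is that every element of $A$ has an immediate successor, supplied by $\omega$-successorship inside its innermost copy of $\omega^{m_0}$, whereas the maximum $b_1$ of any copy of $(\omega^*)^{n_0}$ sitting inside $B$ has no immediate successor: the copies are indexed by $N_1^{n_1} \cdot N_2^{n_2} \cdots$, which begins with $N_1 = \omega$ and hence contains indices strictly above that of $b_1$'s copy, and any such later copy, being an $(\omega^*)^{n_0}$ with no least element, leaves $b_1$ approached from above. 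Player I's three-move winning strategy is then to play such $b_1$; let $a_1$ be II's response and play $a_2$ the successor of $a_1$ in $A$; II is forced to play some $b_2 > b_1$, and I then plays any point of the nonempty interval $(b_1, b_2)$, which II cannot match since $(a_1, a_2) = \emptyset$. The main obstacle is just keeping track of extremality and successorship in these nested anti-lexicographic products, but both facts follow directly from the definition of monomial.
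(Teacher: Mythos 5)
Your proof is correct and follows essentially the same route as the paper's: the $s = 1$ (or $t = 1$) case is handled identically via the presence or absence of an extremum, and for $s, t \ge 2$ your three-move strategy is the exact mirror image of the paper's (you exploit a point of $B$ with no immediate successor against $A$, all of whose points have immediate successors, where the paper plays a point of $A$ with no immediate predecessor against $B$, all of whose points have immediate predecessors). No gaps.
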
   
\begin{proof} For the first part, let us suppose that $s = 1$ and $M_0 = \omega$, the other cases being similar. Then $A$ has a least member and $B$ does not. Here I plays the least member $a_1$ of $A$ on 
his first move, and then to the left of whatever $b_1 \in B$ II plays, and wins on the second move.

If $s, t \ge 2$, then neither $A$ nor $B$ has a least or greatest element, and this ensures that II can win in 2 moves. To see that I can win in 3 moves, again assume for ease that $M_0 = \omega$, and I 
plays on his first move the least member $a_1$ of a copy of $\omega^{m_0}$. II's reply $b_1$ lies in a copy of $(\omega^*)^{n_0}$, so has an immediate predecessor $b_2$, which I plays on his second move. Since $a_1$ has no immediate predecessor in $A$, I wins on the third move by playing between II's moves $a_2$ and $a_1$. \end{proof}

\vspace{.1in}

The next result is of use when we consider `extra' parts of a monomial beyond the significant sections.

\begin{theorem} \label{3.2} Let $A = M_0^{n_0} \cdot M_1^{n_1} \cdot M_2^{n_2} \cdots M_{s-1}^{n_{s-1}} \cdot X$ and $B = M_0^{n_0} \cdot M_1^{n_1} \cdots M_{s-1}^{n_{s-1}} \cdot Y$ where for all $i$, 
$n_i$ is a non-zero finite number, $X$ and $Y$ are non-empty linear orders, and $M_0$, $M_1, \ldots$ alternate between $\omega$ and $\omega^*$ (or the other way round). Then

{\rm (i)}  if $X$ and $Y$ either both have least elements, or neither do, and similarly for greatest elements, then $A \equiv_{2(n_0 + n_1 + \cdots + n_{s-1})} B$,

{\rm (ii)}  if $|X| > 1$ then 

$M_0^{n_0} \cdot M_1^{n_1} \cdot M_2^{n_2} \cdots M_{s-1}^{n_{s-1}}  \not \equiv_{2(n_0 + n_1 + \cdots + n_{s-1}) + 1}
M_0^{n_0} \cdot M_1^{n_1} \cdots M_{s-1}^{n_{s-1}} \cdot X$.   \end{theorem}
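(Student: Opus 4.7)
The plan is to prove both parts by induction on $s$, using the 2-phase lemma (Lemma \ref{2.9}) as the main tool. Write $C = M_1^{n_1} \cdot M_2^{n_2} \cdots M_{s-1}^{n_{s-1}}$ (empty when $s = 1$), so that $A = M_0^{n_0} \cdot (C \cdot X)$, $B = M_0^{n_0} \cdot (C \cdot Y)$, and in the notation of (ii), $P = M_0^{n_0} \cdot C$ and $P \cdot X = M_0^{n_0} \cdot (C \cdot X)$.

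For the base case $s = 1$, part (i) is Lemma \ref{2.7}(i) when $M_0 = \omega$, with the matching-least hypothesis supplied directly by the theorem; when $M_0 = \omega^*$ I would reverse using Lemma \ref{1.5}(ii) to rewrite $(\omega^*)^{n_0} \cdot X$ as the reverse of $\omega^{n_0} \cdot X^*$, and then apply Lemma \ref{2.7}(i) to $X^*, Y^*$, noting that matching-greatest for $X, Y$ becomes matching-least for $X^*, Y^*$. Part (ii) follows the same way from Lemma \ref{2.7}(ii).

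For the inductive step, the key observation is that since the $M_i$ alternate, $C$ begins (on the inside) with $M_1$, the opposite type of $M_0$. Hence if $M_0 = \omega$ then $M_1 = \omega^*$ forces $C$ to lack a least element, and therefore so do $C \cdot X$ and $C \cdot Y$; dually, if $M_0 = \omega^*$ then $C$, $C \cdot X$, $C \cdot Y$ all lack a greatest element. This automatically supplies the matching hypothesis of Lemma \ref{2.9} (or of its reversal-dual for $(\omega^*)^{n_0}$, obtained from Lemma \ref{2.9} via Lemma \ref{1.5}(ii)) when it is applied with $m = n_0$ and primed orders $C \cdot X, C \cdot Y$ for part (i), or $C, C \cdot X$ for part (ii).

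With the matching condition in hand, both halves fall out. For (i), the inductive hypothesis gives $C \cdot X \equiv_{2(n_1 + \cdots + n_{s-1})} C \cdot Y$, and the 2-phase lemma promotes this to $A \equiv_{2(n_0 + n_1 + \cdots + n_{s-1})} B$. For (ii), the inductive hypothesis gives $l(C, C \cdot X) \le 2(n_1 + \cdots + n_{s-1})$, and the 2-phase lemma yields $l(P, P \cdot X) = 2n_0 + l(C, C \cdot X) \le 2(n_0 + n_1 + \cdots + n_{s-1})$, so $P \not\equiv_{2(n_0 + \cdots + n_{s-1}) + 1} P \cdot X$. The only genuine subtlety is the $M_0 = \omega^*$ case of the 2-phase lemma, which is not literally the statement of Lemma \ref{2.9}; I would extract it as a one-line corollary via Lemma \ref{1.5}(ii) rather than redo the proof, and this is the main cosmetic obstacle.
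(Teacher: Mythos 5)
Your proof is correct and follows essentially the same route as the paper: induction on $s$, with Lemma \ref{2.7} as the base case and Lemma \ref{2.9} driving the inductive step via the decomposition $A = M_0^{n_0}\cdot(M_1^{n_1}\cdots M_{s-1}^{n_{s-1}}\cdot X)$. You are merely more explicit than the paper about two points it leaves implicit, namely that the alternation of the $M_i$ guarantees the matching least/greatest hypothesis of the 2-phase lemma, and that the $M_0=\omega^*$ case is obtained by reversal via Lemma \ref{1.5}(ii).
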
   
\begin{proof} (i) We use induction on $s$. If $s = 1$ then the result follows from Lemma \ref{2.7}(i) (noting that there the hypothesis was just on {\em least} elements, but here since $M_0$ is allowed to be 
$\omega$ or $\omega^*$, we need the hypothesis on least and greatest). For the induction step, we assume the result for $s$, and write 
$A' = M_1^{n_1} \cdot M_2^{n_2} \cdots M_s^{n_s} \cdot X$ and $B' = M_1^{n_1} \cdot M_2^{n_2} \cdots M_s^{n_s} \cdot Y$. By induction hypothesis, 
$A' \equiv_{2(n_1 + n_2 + \cdots + n_s)} B'$, and from Lemma \ref{2.9} we deduce that $A \equiv_{2(n_0 + n_1 + \cdots + n_s)} B$.

(ii) The basis case follows from Lemma \ref{2.7}(ii), and the induction step follows by considering $M_1^{n_1} \cdot M_2^{n_2} \cdots M_s^{n_s}$ and 
$M_1^{n_1} \cdot M_2^{n_2} \cdots M_s^{n_s} \cdot X$, which by induction hypothesis are $2(n_1 + n_2 + \cdots + n_s) + 1$-inequivalent, and again applying Lemma \ref{2.9}. \end{proof}

\vspace{.1in}

For the next result we require a slightly different strengthening of Lemma \ref{2.7}. 

\begin{lemma} \label{3.3} For any integers $n \ge m  > 0$ and non-empty linear orders $X$ and $Y$,

{\rm (i)} if neither $X$ nor $Y$ has a least element, then $\omega^m \cdot X \equiv_{2m+1} \omega^n \cdot Y$,

{\rm (ii)} if neither $X$ nor $Y$ has a greatest or least element, then 
$\omega^m \cdot X \equiv_{2m+2} \omega^n \cdot Y$.

\end{lemma}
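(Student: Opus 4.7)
The plan is to reduce both parts to the 2-phase Lemma \ref{2.9} by absorbing the excess $n-m$ factors of $\omega$ into the right-hand factor. I set $B' := \omega^{n-m}\cdot Y$, interpreted as $Y$ itself when $n=m$, so that $\omega^n\cdot Y = \omega^m\cdot B'$. Before applying the 2-phase lemma I need to check its structural hypothesis, namely that neither $X$ nor $B'$ has a least element. For $X$ this is by assumption, and for $B'$ it is because the least element of $\omega^{n-m}\cdot Y$ would have to come from a least element of $Y$, which does not exist. For part (ii) I would additionally note that $B'$ has no greatest element---if $n>m$ because $\omega^{n-m}$ is a limit ordinal, and if $n=m$ directly from the hypothesis on $Y$.

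With this setup the problem reduces to determining how many moves II can survive on the sub-pair $(X, B')$. For part (i) it suffices that II wins $G_1(X, B')$, which is immediate since any single-point partial map is order-preserving. The strategy-transformation in the proof of Lemma \ref{2.9} then lifts this to a winning II-strategy in $G_{2m+1}(\omega^m\cdot X, \omega^m\cdot B') = G_{2m+1}(\omega^m\cdot X, \omega^n\cdot Y)$, which is the desired conclusion. For part (ii) I would argue $X\equiv_2 B'$: after player I plays $a_1$ in one ordering, II replies with any $b_1$ in the other, and whichever side player I's second move lies on, II can answer because every point of both $X$ and $B'$ admits strictly smaller and strictly larger elements. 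The same lifting through Lemma \ref{2.9}(i) produces a winning II-strategy in $G_{2m+2}(\omega^m\cdot X, \omega^n\cdot Y)$.

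The main subtlety to navigate is that Lemma \ref{2.9}(i) is phrased via the optimal length $l(A',B')=n$, which implicitly assumes $A' \not\equiv B'$. Its proof, however, really only constructs a winning II-strategy in $G_{2m+n}$ on the products from a winning II-strategy in $G_n$ on $(A', B')$ plus the hypothesis on least elements, with no appeal to the optimality of $n$, so the argument goes through whether or not $X$ and $B'$ are elementarily equivalent. In the degenerate case $X \equiv B'$, Lemma \ref{1.1} directly yields $\omega^m\cdot X \equiv \omega^n\cdot Y$, handling both parts at once.
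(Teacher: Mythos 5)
Your proposal is correct and takes essentially the same route as the paper: the authors also write $\omega^n\cdot Y=\omega^m\cdot B'$ with $B'=\omega^{n-m}\cdot Y$, observe that $l(X,B')\ge 1$ (resp.\ $\ge 2$ when greatest elements are also absent), and invoke Lemma \ref{2.9}(i) to get $l(A,B)\ge 2m+1$ (resp.\ $2m+2$). Your extra remark about the degenerate case $X\equiv B'$ only makes explicit what the paper's use of ``$l(A',B')\ge k$'' already tacitly covers.
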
   
\begin{proof} (i) Let $A = \omega^m \cdot X$ and $B = \omega^n \cdot Y$, and write $A' = X$, $B' = \omega^{n-m} \cdot Y$. Then $A'$ and $B'$ are non-empty, and so $l(A', B') \ge 1$, and neither of them has a least element. So by Lemma \ref{2.9}(i), $l(A, B) \ge 2m + 1$.

(ii) With the same $A'$ and $B'$, this time neither $A'$ nor $B'$ has a greatest or least, so $l(A', B') \ge 2$, and by Lemma \ref{2.9}(i) again, $l(A, B) \ge 2m + 2$.
\end{proof}

\vspace{.1in}
 
We are now able to give a more general result about monomials, first comparing ones of equal lengths. 

\begin{theorem} \label{3.4} Let $A = M_0^{m_0} \cdot M_1^{m_1} \cdots M_{s-1}^{m_{s-1}} \neq B = M_0^{n_0}  \cdot M_1^{n_1}\cdots M_{s-1}^{n_{s-1}}$ where $m_i $ and $n_i$ are non-zero finite numbers for 
every $i < s$ and $M_0$, $M_1, \ldots$ alternate between $\omega$ and $\omega^*$ and let $t$ be the smallest $i$ such that $m_i \ne n_i$. (Assume without loss of generality that $m_t < n_t$.) 
\begin{itemize}
\item[(i)]   If $t = s-1$ , then $l(A,B) = {2(m_0 + m_1+ \cdots + m_t)}$,

\item[(ii)]  If $t = s-2$, then $l(A,B) = {2(m_0+ m_1 + \cdots + m_t) + 1}$, 

\item[(iii)] If $t < s-2$,  then $l(A,B)= {2(m_0 + m_1 + \cdots + m_t) + 2}$. 
\end{itemize}
\end{theorem}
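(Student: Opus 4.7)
The plan is to reduce to the case $t = 0$ by iterating the 2-phase lemma (Lemma~\ref{2.9}), and then to compute the remaining length by a short case analysis. For the reduction I would strip off the common prefix $M_0^{m_0} \cdot M_1^{m_1} \cdots M_{t-1}^{m_{t-1}}$ one factor at a time. At the $j$-th step ($j < t$), write $A = M_j^{m_j} \cdot A^{(j+1)}$ and $B = M_j^{m_j} \cdot B^{(j+1)}$; the tails $A^{(j+1)}, B^{(j+1)}$ both begin with a factor of type opposite to $M_j$, so each lacks a least element (when $M_j = \omega$) or a greatest element (when $M_j = \omega^*$). The hypothesis of Lemma~\ref{2.9} (or its $\omega^*$-analogue, obtained by reversal via Lemma~\ref{1.5}) therefore holds, and iterating yields $l(A,B) = 2(m_0 + \cdots + m_{t-1}) + l(\tilde A, \tilde B)$, where $\tilde A = M_t^{m_t} \cdots M_{s-1}^{m_{s-1}}$ and $\tilde B = M_t^{n_t} \cdots M_{s-1}^{n_{s-1}}$.

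It then suffices to show that $l(\tilde A, \tilde B)$ equals $2m_t$, $2m_t + 1$, or $2m_t + 2$ in cases (i), (ii), (iii) respectively. I would take $M_t = \omega$ throughout, the $M_t = \omega^*$ case following again by reversal. In case (i), $\tilde s := s - t = 1$, so $\tilde A = \omega^{m_t}$ and $\tilde B = \omega^{m_t} \cdot \omega^{n_t - m_t}$; Lemma~\ref{2.7}(i) applied with $X = 1$ and $Y = \omega^{n_t - m_t}$ (both with a least element) gives $\tilde A \equiv_{2m_t} \tilde B$, while Lemma~\ref{2.7}(ii) gives $\tilde A \not\equiv_{2m_t + 1} \tilde B$ since $|Y| > 1$.

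In cases (ii) and (iii), $\tilde s \ge 2$; I would factor $\tilde A = \omega^{m_t} \cdot X$ and $\tilde B = \omega^{m_t} \cdot Y$ where $X = M_{t+1}^{m_{t+1}} \cdots M_{s-1}^{m_{s-1}}$ and $Y = \omega^{n_t - m_t} \cdot M_{t+1}^{n_{t+1}} \cdots M_{s-1}^{n_{s-1}}$. Both $X$ and $Y$ lack a least element: $X$ because its innermost factor is $(\omega^*)^{m_{t+1}}$, and $Y$ because the factor $(\omega^*)^{n_{t+1}}$ lies immediately outside its innermost factor $\omega^{n_t - m_t}$. So by Lemma~\ref{2.9}, $l(\tilde A, \tilde B) = 2m_t + l(X, Y)$. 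Now $X$ and $Y$ are alternating monomials of lengths $\tilde s - 1$ and $\tilde s$, starting with the opposite types $M_{t+1}$ and $M_t$ respectively, so Theorem~\ref{3.1} applies directly: in case (ii), $\tilde s - 1 = 1$ gives $l(X,Y) = 1$; in case (iii), both lengths are at least $2$ and we obtain $l(X,Y) = 2$. Assembling the contributions yields the three stated formulas.

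The main delicate point is the bookkeeping for Lemma~\ref{2.9}: at every application one must verify that the two tails simultaneously lack the appropriate endpoint, and one has to invoke Lemma~\ref{1.5} to switch between the $\omega$ and $\omega^*$ versions of the 2-phase lemma as $M_j$ alternates.
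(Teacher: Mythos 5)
Your proposal is correct, and for parts (i) and (ii) it is essentially the paper's argument: the paper also proceeds by peeling off one factor at a time with Lemma \ref{2.9} (implicitly using reversal for the $\omega^*$ steps) and lands on the same base cases, handled by Lemma \ref{2.7} in (i) and by the observation that $(\omega^*)^{m_1}$ has a greatest element while $\omega^{n_0-m_0}\cdot(\omega^*)^{n_1}$ does not in (ii) (your appeal to Theorem \ref{3.1} for the latter is an equivalent packaging). Where you genuinely diverge is the base case of (iii), $t=0$. The paper splits it into two halves: an explicit strategy for player I showing $A \not\equiv_{2m_0+3} B$ (playing the first point of an $\omega^{n_0}$-block, forcing a comparison of $\omega^k$ with a multiple of $\omega^{n_0}$, then invoking Lemma \ref{2.7}(ii)), plus Lemma \ref{3.3}(ii) for $A \equiv_{2m_0+2} B$. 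You instead absorb the surplus $\omega^{n_t-m_t}$ into the second coordinate, so that $\tilde A = \omega^{m_t}\cdot X$ and $\tilde B = \omega^{m_t}\cdot Y$ with $X$, $Y$ alternating monomials of lengths $\ge 2$ beginning with opposite types; Theorem \ref{3.1} gives $l(X,Y)=2$ exactly, and since Lemma \ref{2.9}(i) is stated as an equality, both the upper and lower bounds drop out in one stroke. This is a tidier route: it eliminates the bespoke strategy argument and the separate appeal to Lemma \ref{3.3}(ii) (whose own proof is in any case just your decomposition fed into Lemma \ref{2.9}(i) without the matching converse). The only points needing care, which you flag, are that $Y=\omega^{n_t-m_t}\cdot(\omega^*)^{n_{t+1}}\cdots$ really has no least element (true, since the second factor is $(\omega^*)^{n_{t+1}}$), that all exponents of $X$ and $Y$ are positive so Theorem \ref{3.1} applies, and that $l(\tilde A,\tilde B)$ must be computed before the outward iteration of Lemma \ref{2.9} so that the hypothesis $l(A',B')=n$ finite is available at each stage; none of these is a gap.
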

\begin{proof} Throughout we assume without loss of generality that $M_0 = \omega$.

(i) We use induction. If $s = 1$ then the result follows from Lemma \ref{2.7}. Otherwise, if $s > 0$, write $A$ as $\omega^{m_0} \cdot A'$, and $B$ as $\omega^{m_0} \cdot B'$, where
$A' = M_1^{m_1} \cdot M_2^{m_2} \cdots M_{s-1}^{m_{s-1}}$ and $B' = M_1^{m_1} \cdot M_2^{m_2} \cdots M_{s-2}^{m_{s-2}} \cdot M_{s-1}^{n_{s-1}}$. By induction hypothesis, 
$l(A', B') = 2(m_1 + \ldots + m_{s-1})$. Since neither $A'$ nor $B'$ has a least element, by Lemma \ref{2.9}, $l(A,B) = 2(m_0 + m_1 + \cdots + m_{s-1})$. 

(ii) We first show that $A \not \equiv_{2(m_0+ m_1 + \cdots + m_t) + 2} B$. If $s \le 2$, then as $s-2$ exists, $s = 2$, and 
$A = \omega^{m_0} \cdot (\omega^*)^{m_1}$, $B = \omega^{n_0} \cdot (\omega^*)^{n_1}$ where $m_0 < n_0$. Thus $A = \omega^{m_0} \cdot A'$, $B = \omega^{m_0} \cdot B'$ where $A' = (\omega^*)^{m_1}$
and $B' = \omega^{n_0 - m_0} \cdot (\omega^*)^{n_1}$. Since $A'$ has a greatest and $B'$ does not, $l(A', B') = 1$, so by Lemma \ref{2.9}(i), $l(A, B) = 2m_0 + 1$ (as neither $A'$ nor $B'$ has a least). 

If $s \ge 3$, we use induction. Let $A' = M_1^{m_1} \cdot M_2^{m_2} \cdots M_{s-1}^{m_{s-1}}$ and $B' = M_1^{n_1} \cdot M_2^{n_2}\cdots M_{s-1}^{n_{s-1}}$, so that $A = \omega^{m_0} \cdot A'$ and 
$B = \omega^{m_0} \cdot B'$. By induction hypothesis, $l(A', B') = 2(m_1+ m_2 + \cdots + m_{s-2}) + 1$. Since $A'$, $B'$ do not have least elements, by Lemma \ref{2.9}(i) it follows that
$l(A, B) = 2(m_0+ m_1 + \cdots + m_t) + 1$.
 
(iii) First consider the case where $t = 0$, and we start by showing that $A \not \equiv_{2m_0 + 3} B$. Let I choose the first point $b_1$ of an $\omega^{n_0}$-block of $B$, and let $a_1$ be II's response. 
Then either $a_1$ is the first point in its $\omega^{m_0}$-block, in which case, as $M_1 = \omega^*$, there is an immediately preceding block, so I can play $a_2 < a_1$ so that 
$[a_2, a_1) \cong \omega^{m_0}$, or if not, I can play $a_2 < a_1$ so that $[a_2, a_1) \cong \omega^k$ where $k < m_0$ (possibly $k = 0$). Let $b_2$ be II's reply. Then $b_2 < b_1$ and so 
$[b_2, b_1)$ is a multiple of $\omega^{n_0}$. By Lemma \ref{2.7}(ii), $\omega^k \not \equiv_{2k+1} (b_2, b_1)$, so I can win in at most $2k + 1 \le 2m_0 + 1$ more moves (if $k = 0$, then 
$(a_2, a_1) = \emptyset$, so I wins on the next move by playing between $b_2$ and $b_1$), meaning that he has used at most $2m_0 + 3$ moves in all.

The fact that $A \equiv_{2m_0 + 2} B$ follows at once from Lemma \ref{3.3}(ii).

Now consider the general case $t > 0$, and we use induction. We write $A = \omega^{m_0} \cdot A'$ and $B = \omega^{m_0} \cdot B'$, where $A' = M_1^{m_1} \ldots M_{s-1}^{m_{s-1}}$ and 
$B' = M_1^{n_1} \ldots M_{s-1}^{n_{s-1}}$. By induction hypothesis, $l(A', B') = 2(m_1 + \ldots + m_t) + 2$, and the result now follows by appeal to Lemma \ref{2.9}.
\end{proof}

\vspace{.1in}

We extend the same result to the case of monomials of different lengths. 

\begin{theorem} \label{3.5} Let $A = M_0^{m_0} \cdot M_1^{m_1} \cdots M_{s-1}^{m_{s-1}}$ and $B = M_0^{n_0}  \cdot M_1^{n_1}\cdots M_{t-1}^{n_{t-1}}$ where $m_i $ and $n_i$ are non-zero finite numbers for 
$i < s$, $j < t$, $s < t$, and $M_0$, $M_1, \ldots$ alternate between $\omega$ and $\omega^*$ and let $u$ be the smallest $i$ such that $m_i \ne n_i$ (or, if $m_i = n_i$ for all $i < s$ then we let 
$u = s$). 
\begin{itemize}
\item[(i)]  If $u = s$ or $s-1$, then $l(A,B) = 2(m_0 + m_1+ \cdots + m_{s-2}) + 1$,

\item[(ii)] If $u = s-2$, and $m_{s-2} < n_{s-2}$, then $l(A,B)= 2(m_0 + m_1 + \cdots + m_{s-3} + m_{s-2}) + 1$, and if $n_{s-2} < m_{s-2}$, then 
$l(A,B)= 2(m_0 + m_1 + \cdots + m_{s-3} + n_{s-2}) + 2$,

\item[(iii)] If $u < s-2$,  then $l(A,B)= 2(m_0 + m_1 + \cdots + m_{u-1} + \min(m_u, n_u)) + 2$. 
\end{itemize}
\end{theorem}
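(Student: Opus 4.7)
The plan is to parallel the proof of Theorem \ref{3.4}, using induction together with the 2-phase Lemma \ref{2.9} to peel off the innermost common factor $\omega^{m_0}$ (after WLOG setting $M_0 = \omega$ via Lemma \ref{1.5}). For the inductive step with $u \ge 1$, so $m_0 = n_0$, I would write $A = \omega^{m_0} \cdot A'$ and $B = \omega^{m_0} \cdot B'$ with $A' = M_1^{m_1} \cdots M_{s-1}^{m_{s-1}}$ and $B' = M_1^{n_1} \cdots M_{t-1}^{n_{t-1}}$. Both $A'$ and $B'$ have $(\omega^*)^{m_1}$ as their innermost factor and hence lack a least element, so Lemma \ref{2.9}(i) gives $l(A, B) = 2m_0 + l(A', B')$. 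The reduced instance has parameters $(s', t', u') = (s-1, t-1, u-1)$, which stays in the same one of cases (i), (ii), (iii), so by induction and a routine index-shift the claimed formula compounds correctly.

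Three base cases remain. For case (i) at $s = 1$, $A = \omega^{m_0}$ has a least element while $B$ of length $\ge 2$ contains an $(\omega^*)^{n_1}$ factor and so has none, giving $l(A, B) = 1$ to match the empty front-sum. For case (ii) at $s = 2$, $u = 0$, I would peel off $\omega^{\min(m_0, n_0)}$ via Lemma \ref{2.9}(i) and compute the residual $l$-value by hand. In subcase (a), $m_0 < n_0$, the residues $A' = (\omega^*)^{m_1}$ and $B' = \omega^{n_0 - m_0} \cdot (\omega^*)^{n_1} \cdots$ satisfy that $A'$ has a greatest but $B'$ does not (its innermost $\omega^{n_0 - m_0}$-factor precludes one), so $l(A', B') = 1$ by a one-move win and $l(A, B) = 2m_0 + 1$. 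In subcase (b), $n_0 < m_0$, the residues each have length $\ge 2$ and begin with opposite monomial types, so Theorem \ref{3.1} gives $l(A', B') = 2$ and hence $l(A, B) = 2n_0 + 2$.

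The main obstacle will be the base case for (iii), with $u = 0$ and $s \ge 3$. Writing $A = \omega^{m_0} \cdot X$ and $B = \omega^{n_0} \cdot Y$ with WLOG $m_0 \le n_0$, the upper bound $A \equiv_{2m_0 + 2} B$ follows directly from Lemma \ref{3.3}(ii), since both $X$ and $Y$ are alternating $\omega/\omega^*$ monomials of length $\ge 2$ and thus have neither a least nor a greatest element. For the lower bound I would adapt the Theorem \ref{3.4}(iii) strategy essentially verbatim: player I plays the first point $b_1$ of some $\omega^{n_0}$-block of $B$, and for any response $a_1 \in A$ then plays $a_2 < a_1$ with $[a_2, a_1) \cong \omega^k$ for some $k \le m_0$ (taking either the immediately preceding $\omega^{m_0}$-block of $A$ or a shorter initial segment of $a_1$'s current block). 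Player II's reply $b_2 < b_1$ necessarily makes $[b_2, b_1)$ a nontrivial multiple of $\omega^{n_0}$, so Lemma \ref{2.7}(ii) lets player I finish in at most $2k + 1 \le 2m_0 + 1$ further moves, for a total of at most $2m_0 + 3$. The extended tail of $B$ plays no role in this argument, which is the only substantive difference from the setting of Theorem \ref{3.4}(iii).
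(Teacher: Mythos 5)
Your proposal is correct and follows essentially the same route as the paper's proof: induction via the 2-phase Lemma \ref{2.9}(i) peeling off the common innermost factor, with base cases at $s=1$ for (i), at $s=2$, $u=0$ for (ii), and at $u=0$, $s\ge 3$ for (iii) handled by the Theorem \ref{3.4}(iii)-style block-cutting strategy together with Lemma \ref{3.3}(ii) for the upper bound. The only cosmetic differences are that you cite Theorem \ref{3.1} for the residual value $l(A',B')=2$ in case (ii)(b), where the paper re-derives that two-line argument directly, and that you make the Lemma \ref{3.3}(ii) upper bound in case (iii) explicit where the paper leaves it implicit.
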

\begin{proof} (i) First suppose that $s = 1$. Assume without loss of generality that $M_0 = \omega$. Thus $A = \omega^{m_0}$ and $B = \omega^{n_0} \cdot (\omega^*)^{n_1} \cdots$ where $m_0 = n_0$ if
$u = 1$ and $m_0 \neq n_0$ if $u = 0$. By Lemma \ref{2.9}(ii),  
$l(A, B) = 1 = 2(m_0 + \ldots + m_{s-2}) + 1$.

If $s = 2$, then $A = \omega^{m_0} \cdot (\omega^*)^{m_1}$ and $B = \omega^{m_0} \cdot (\omega^*)^{n_1} \cdot \omega^{n_2} \cdots$ (where again $m_1 = n_1$ or $m_1 \neq n_1$ depending on the value of $u$). 
Here if we let $A' =  (\omega^*)^{m_1}$ and $B' = (\omega^*)^{n_1} \cdot \omega^{n_2} \cdots$, then by the case $s = 1$ (with $\omega^*$ in place of $\omega$), $l(A', B') = 1$. As neither $A'$ nor $B'$ has 
a least, by Lemma \ref{2.9}(i), $l(A, B) = 2m_0 + 1$, as desired.

Now let $s > 2$ and we use induction. Writing $A' = M_1^{m_1} \cdots M_{s-1}^{m_{s-1}}$, $B' = M_1^{n_1} \cdots M_{t-1}^{n_{t-1}}$, by the induction hypothesis, $l(A', B') = 2(m_1 + \ldots + m_{s-2}) + 1$, 
and by appeal to Lemma \ref{2.9} again, $l(A, B) = 2(m_0 + m_1 + \ldots + m_{s-2}) + 1$. 

(ii) If $s = 2$ then assuming $M_0 = \omega$, $A = \omega^{m_0}(\omega^*)^{m_1}$ and $B = \omega^{n_0}(\omega^*)^{n_1}\omega^{n_2} \cdots$. If $m_0 < n_0$ then we write $A = \omega^{m_0} \cdot A'$ and
$B = \omega^{m_0} \cdot B'$, where neither $A' = (\omega^*)^{m_1}$ nor $B' = \omega^{n_0 - m_0}(\omega^*)^{n_1}\omega^{n_2} \cdots$ has a least element. Clearly $l(A', B') = 1$, so by Lemma \ref{2.9}(i), 
$l(A, B) = 2m_0 + 1$. If $n_0 < m_0$ then $A = \omega^{n_0} \cdot A'$ and $B = \omega^{n_0} \cdot B'$, where $A' = \omega^{m_0 - n_0}(\omega^*)^{m_1}$, $B' = (\omega^*)^{n_1} \omega^{n_2} \cdots$ again do not
have least elements. This time, $l(A', B') = 2$, giving $l(A, B) = 2n_0 + 2$. For I can win in 3 moves by playing the first point of the final $\omega^{m_0 - n_0}$ block in $A'$. Whatever $b_1 \in B'$ II 
plays has an immediate predecessor $b_2$ which I plays, and then I wins on the third move by playing in $A'$ between $a_2$ and $a_1$. Since however $A'$ and $B'$ have neither greatest {\em nor} least, 
$A' \equiv_2 B'$.

For general $s > 2$ we argue inductively, and write $A = \omega^{m_0}\cdot A'$, $B = \omega^{m_0} \cdot B'$. By induction hypothesis the result holds for $A'$ and $B'$, and this lifts to $A$ and $B$ by using Lemma \ref{2.9}(i).

(iii) Here we have $A = M_0^{m_0} \cdots M_{u-1}^{m_{u-1}}M_u^{m_u} \cdots M_{s-1}^{m_{s-1}}$ and $B = M_0^{m_0} \cdots M_{u-1}^{m_{u-1}}$ $M_u^{n_u} \cdots M_{t-1}^{m_{t-1}}$. First suppose that $u = 0$ and
$M_0 = \omega$, so $A = \omega^{m_0} \cdots M_{s-1}^{m_{s-1}}$, $B = \omega^{n_0} \cdots M_{t-1}^{n_{t-1}}$, $m_0 < n_0$, $s > 2$. Player I plays the first point $b_1$ of an $\omega^{n_0}$ block in $B$. 
Then as in the proof of Theorem \ref{3.4}, whatever $a_1 \in A$ II plays, I can play $a_2 < a_1$ so that $[a_2, a_1) \cong \omega^k$ for some $k \le m_0$ (in the same $\omega^{m_0}$ block as $a_1$ if it is 
not the first point of the block, or else in the immediately preceding block if it is the first). Then $[b_2, b_1)$ must be a multiple of $\omega^{n_0}$, and as $m_0 < n_0$, by Lemma \ref{2.7}(ii), I can 
win in at most $2m_0 + 1$ further moves, making at most $2m_0 + 3$ in all.

A similar argument applies if $n_0 < m_0$, and the general case follows by induction using Lemma \ref{2.9}(i) as before.
\end{proof}

\begin{corollary} \label{3.6} If $A =  M_0^{m_0} \cdot M_1^{m_1} \cdots M_{s-1}^{m_{s-1}}$ and $B =  M_0^{n_0} \cdot M_1^{n_1} \cdots M_{t-1}^{n_{t-1}}$ where $m_i, n_j$ for $i < s, j < t$
are non-zero natural numbers, then $A \equiv B$ if and only if $s = t$ and $m_i = n_i$ for all $i$.  \end{corollary}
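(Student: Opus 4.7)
The plan is to reduce the corollary directly to Theorems \ref{3.4} and \ref{3.5}, since between them these two results compute $l(A,B)$ in every case where $A$ and $B$ differ as formal monomials (subject to the standing convention in this section that the $M_i$ alternate between $\omega$ and $\omega^*$).

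The backward implication is immediate: if $s = t$ and $m_i = n_i$ for all $i < s$, then $A$ and $B$ are the same linear order, so certainly $A \equiv B$.

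For the forward implication I would argue the contrapositive, showing that if $(s,(m_i)_{i<s}) \neq (t,(n_j)_{j<t})$ then there is a finite $n$ with $A \not\equiv_n B$, which precludes $A \equiv B$. There are two cases:

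\textbf{Case 1:} $s = t$ but $m_i \ne n_i$ for some $i$. Let $u$ be the least such index. Theorem \ref{3.4} gives a finite value for $l(A,B)$, namely $2(m_0 + \cdots + m_u)$, $2(m_0 + \cdots + m_u)+1$, or $2(m_0 + \cdots + m_u)+2$ according as $u = s-1$, $u = s-2$, or $u < s-2$. In particular $A \not\equiv_{l(A,B)+1} B$, so $A \not\equiv B$.

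\textbf{Case 2:} $s \ne t$. Without loss of generality $s < t$. Theorem \ref{3.5} then supplies a finite value of $l(A,B)$ in each of its subcases (i)--(iii), which is exhaustive because $u \le s$ in the notation of that theorem. Hence again $A \not\equiv_{l(A,B)+1} B$, and so $A \not\equiv B$.

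There is no real obstacle: the content is entirely in Theorems \ref{3.4} and \ref{3.5}, and the only care required is to observe that every pair of distinct formal monomials with alternating bases $\omega,\omega^*$ falls into exactly one of the configurations handled by those two theorems, so a finite bound on $l(A,B)$ is always available.
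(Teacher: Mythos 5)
Your proposal is correct and follows essentially the same route as the paper: the backward direction is the trivial isomorphism, and the forward direction invokes Theorem \ref{3.5} to rule out $s \neq t$ and Theorem \ref{3.4} to rule out differing exponents, in each case because those theorems supply a finite value of $l(A,B)$ and hence a finite level at which the orders are inequivalent. The only cosmetic difference is that you phrase it as a two-case contrapositive rather than first deducing $s=t$ and then arguing by contradiction on the exponents.
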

\begin{proof} $\Longrightarrow$ It follows from Theorem \ref{3.5} that $s = t$. Suppose that $m_j \ne n_j$ for some $j$, and let $u$ be the least such. Theorem \ref{3.4} gives a precise value for $m$ that 
depends on $u$ such that $A \equiv_m B$ but $A \not \equiv_{m+1} B$.  Therefore $A \not \equiv B$ which is a contradiction. Hence $m_j = n_j$ for all $j$. 

$\Longleftarrow$  Suppose that $s = t$ and $m_i = n_i$ for all $i < s$. Then $A \cong B$ and hence $A \equiv B$. 
\end{proof}

\vspace{.1in}

Corollary \ref{3.6} tells us that the simple form of a finite product monomial, say $A =  M_0^{n_0} \cdot M_1^{n_1} \cdots M_{s-1}^{n_{s-1}}$ with finite exponents, is $A$ itself and it is unique. The 
following corollary tells us about the simple form of an $\omega$-monomial product with finite exponents. 
 
\begin{corollary} \label{3.7} If $A = M_0^{m_0} \cdot M_1^{m_1} \cdots$ and $B =  M_0^{n_0} \cdot M_1^{n_1} \cdots$ are $\omega$-monomial products where $m_i, n_i$ are non-zero natural numbers, then 
$A \equiv B$ if and only if $m_i = n_i$ for all $i$.       \end{corollary}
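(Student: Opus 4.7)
The $(\Longleftarrow)$ direction is immediate: if $m_i = n_i$ for all $i$, then $A$ and $B$ are identical as monomials, so $A \equiv B$. For $(\Longrightarrow)$, I would prove the contrapositive by induction on the least index $t$ with $m_t \neq n_t$, using the 2-phase lemma (Lemma \ref{2.9}) to peel off $\omega^{m_0}$ at each step. By Lemma \ref{1.5} and the symmetry of the Ehrenfeucht-Fra\"iss\'e game under reversal, I may assume at each stage that $M_0 = \omega$.

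For the base case $t = 0$, assume $m_0 < n_0$ and write $A = \omega^{m_0} \cdot A'$, $B = \omega^{m_0} \cdot B'$, where $A' = (\omega^*)^{m_1} \cdot \omega^{m_2} \cdots$ and $B' = \omega^{n_0 - m_0} \cdot (\omega^*)^{n_1} \cdots$. Neither $A'$ nor $B'$ has a least element (in $A'$ the innermost factor $(\omega^*)^{m_1}$ has no least, and in $B'$ the indexing of $\omega^{n_0-m_0}$-blocks starts with $(\omega^*)^{n_1}$ and so has no least block). The crucial asymmetry to exploit is: \emph{every element of $A'$ has an immediate predecessor}, while \emph{some element of $B'$ does not}. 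For the first, $(\omega^*)^{m_1}$ is a reversed ordinal in which every element has an immediate predecessor, so for $a = (a_1, a_2, \ldots) \in A'$ the tuple $(a_1^-, a_2, \ldots)$ is easily checked to be the immediate predecessor of $a$ in $A'$. For the second, take $b_1 = (0, z_1, z_2, \ldots) \in B'$ whose innermost coordinate is the least of $\omega^{n_0 - m_0}$ (which exists because $n_0 > m_0$); any $b' < b_1$ must differ from $b_1$ in a non-innermost coordinate, but then cofinally many elements of the $\omega^{n_0 - m_0}$-block fixed by the outer coordinates of $b'$ lie strictly between $b'$ and $b_1$. Hence player I wins the $2$-move game on $A'$, $B'$ by first playing such $b_1$ and then replying to II's response $a_1 \in A'$ with $a_1^-$; this gives $l(A', B') = 1$, and Lemma \ref{2.9}(i) yields $l(A, B) = 2m_0 + 1 < \infty$, so $A \not\equiv B$.

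For the inductive step with $t \geq 1$, the equality $m_0 = n_0$ allows us to write $A = \omega^{m_0} \cdot A'$ and $B = \omega^{m_0} \cdot B'$, where $A' = M_1^{m_1} \cdot M_2^{m_2} \cdots$ and $B' = M_1^{n_1} \cdot M_2^{n_2} \cdots$ are $\omega$-monomials whose first difference is at position $t - 1$. By the induction hypothesis, $l(A', B') < \infty$, and since both $A'$, $B'$ start with $(\omega^*)^{m_1}$ and hence lack a least element, Lemma \ref{2.9}(i) gives $l(A, B) = 2m_0 + l(A', B') < \infty$.

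The main obstacle is the careful verification, directly from the finite-support anti-lexicographic definition of the infinite product, of the immediate-predecessor asymmetry between $A'$ and $B'$ in the base case; once that local structural observation is in hand, the 2-phase lemma drives the rest of the argument routinely.
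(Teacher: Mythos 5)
Your overall route---prove the contrapositive, peel off $\omega^{m_0}$ with the 2-phase Lemma \ref{2.9}, and induct on the first index of disagreement---is essentially the paper's own argument (the paper just cites the strategy of Theorem \ref{3.4}, whose case (iii) is exactly this induction). The structural observations in your base case are also correct: every element of $A' = (\omega^*)^{m_1}\cdot\omega^{m_2}\cdots$ has an immediate predecessor, the element $b_1$ of $B' = \omega^{n_0-m_0}\cdot(\omega^*)^{n_1}\cdots$ whose innermost coordinate is least has none, and neither order has a least element.

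However, the game-theoretic conclusion you draw from this is wrong. The two moves $b_1$, then $a_1^-$, do not defeat player II: since $B'$ has no least element, II answers $a_1^-$ with any $b_2 < b_1$, and the two-point map $a_1 \mapsto b_1$, $a_1^- \mapsto b_2$ is order-preserving, so II survives the 2-move game. Player I needs a \emph{third} move, played inside the nonempty interval $(b_2, b_1)$, which II cannot match because $(a_1^-, a_1)$ is empty. In fact $l(A', B') = 2$, not $1$: both $A'$ and $B'$ are infinite with neither a least nor a greatest element, and (as the paper itself observes in the proof of Theorem \ref{4.6}) any two such orders are 2-equivalent, so $A' \not\equiv_2 B'$ is simply false. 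The damage is contained---Lemma \ref{2.9}(i) with the corrected value gives $l(A, B) = 2m_0 + 2 < \infty$, in agreement with Theorem \ref{3.4}(iii), so the corollary's conclusion still follows---but as written your base case asserts a false value of $l(A', B')$ and rests on a two-move strategy that does not win. The inductive step and the reduction to $M_0 = \omega$ by reversal are fine.
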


\begin{proof} $\Longrightarrow$ Suppose that $m_i \neq n_i$ for some $i$, and let $t$ be the smallest such $i$. By following the same strategy as in Theorem \ref{3.4}, player I can win in 
$2(n_0 + n_1 + \cdots + n_t) + 3$ moves, which shows that $A \not \equiv B$. 

$\Longleftarrow$ Conversely, if $m_i = n_i$ for all $i$ , then $A \cong B$, so $A \equiv B$.  \end{proof} 

\begin{corollary} \label{3.8} Let $A = M_0^{n_0} \cdot M_1^{n_1} \cdots$ and $B =  M_0^{n_0} \cdot M_1^{n_1} \cdots N$  where $N$ is any monomial. Then $A$ is elementarily equivalent to $B$. \end{corollary}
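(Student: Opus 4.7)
The plan is to reduce the statement to Theorem~\ref{3.2}(i) by truncating the $\omega$-monomial product at a sufficiently large finite stage. Fix a positive integer $n$; it suffices to show $A \equiv_n B$, since $n$ is arbitrary.

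Choose $k$ large enough that $2(n_0 + n_1 + \cdots + n_{k-1}) \geq n$, and set
$A_k = M_0^{n_0} \cdot M_1^{n_1} \cdots M_{k-1}^{n_{k-1}}$, $X = M_k^{n_k} \cdot M_{k+1}^{n_{k+1}} \cdots$, and $Y = X \cdot N$. By associativity of the anti-lexicographic product, $A = A_k \cdot X$ and $B = A_k \cdot Y$, so to invoke Theorem~\ref{3.2}(i) it suffices to verify that $X$ and $Y$ each lack both a least and a greatest element (i.e.\ both $X$ and $Y$ share the same ``no least'' and ``no greatest'' status).

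I claim that $X$ has no greatest and no least. Given $f \in X$ with (necessarily finite) support contained in $\{k,\ldots,r\}$, since the $M_i$ alternate between $\omega$ and $\omega^*$ for all $i$, there exists $j > r$ with $M_j = \omega$; defining $g$ to agree with $f$ except at $j$ and setting $g(j) = 1$, we obtain $g > f$ in $X$, because $g$ and $f$ agree above $j$ and $g(j)>f(j)=0$. A symmetric choice with $M_j = \omega^*$ produces an element of $X$ strictly below $f$. Thus $X$ has neither extreme. Since $Y$ consists of $|N|$ anti-lexicographically stacked copies of $X$, the existence of a least (resp.\ greatest) element of $Y$ would force $X$ to have one; hence $Y$ also lacks both extremes.

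Theorem~\ref{3.2}(i) therefore yields $A \equiv_{2(n_0 + \cdots + n_{k-1})} B$, and by the choice of $k$ this gives $A \equiv_n B$. As $n$ was arbitrary, $A \equiv B$. The only non-routine ingredient is the structural observation that the infinite alternating tail $X$ has neither a least nor a greatest element; once this is in hand, the result is a direct application of Theorem~\ref{3.2}(i).
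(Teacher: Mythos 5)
Your proof is correct and follows essentially the same route as the paper: for each $n$ one truncates the $\omega$-product at a finite prefix $A_k$ with $2(n_0+\cdots+n_{k-1})\ge n$ and applies the finite-prefix equivalence result (Theorem \ref{3.2}(i), resting on the two-phase Lemma \ref{2.9}) to the tails $X$ and $X\cdot N$. Your explicit verification that the alternating infinite tail $X$ (and hence $X\cdot N$) has neither a least nor a greatest element is exactly the hypothesis the paper leaves implicit in its one-line citation, so this is a faithful, slightly more careful rendering of the intended argument.
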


\begin{proof} By the proof of Theorem \ref{3.4}, for each $n$, $A \equiv_n B$, and hence $A \equiv B$. \end{proof}

\vspace{.1in}

We now consider monomials that involve infinite exponents. 

\begin{theorem} If $A = M_0^{m_0} \cdot M_1^{m_1} \cdots M_{t-1}^{m_{t-1}} \cdot M_t^{\lambda} \cdot M_{t+1}^{m_{t+1}} \cdots$ where every $m_i$ is a non-zero natural number and $\lambda \ge \omega$ and  
$B = M_0^{m_0} \cdot M_1^{m_1} \cdots M_{t-1}^{m_{t-1}} \cdot M_t^\lambda \cdot M_{t+1}^{n_{t+1}}$ where $n_{t+1} \geq 1$, then $A \equiv_n B$ for all $n$, so $A \equiv B$. \label{3.9} \end{theorem}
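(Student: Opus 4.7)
The plan is first to invoke Lemma \ref{1.1}(i) to strip off the common left factor $M_0^{m_0} \cdots M_{t-1}^{m_{t-1}}$, reducing the claim to
\[
M_t^\lambda \cdot X \equiv_n M_t^\lambda \cdot Y \qquad \text{for all } n,
\]
where $X = M_{t+1}^{m_{t+1}} \cdot M_{t+2}^{m_{t+2}} \cdots$ is the suffix of $A$ beyond $M_t^\lambda$ and $Y = M_{t+1}^{n_{t+1}}$ is the corresponding suffix of $B$. Both $X$ and $Y$ are non-empty and begin with the same term $M_{t+1}$, so they have matching leftmost endpoint structure.

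For each fixed $n$, I would describe a winning strategy for player II in the $n$-move game by a two-phase argument in the spirit of Lemma \ref{2.9}. View each of the two structures as a sequence of ``blocks,'' each block being a copy of $M_t^\lambda$, indexed by $X$ on one side and by $Y$ on the other. In the outer phase, II maintains an order-preserving correspondence between the finitely many blocks touched in $X$ and those touched in $Y$; since both $X$ and $Y$ are infinite with matching leftmost endpoint type and $n$ is fixed in advance, any such finite block-configuration arising during play embeds from one into the other with room to spare. In the inner phase, the blocks on both sides are pairwise isomorphic copies of $M_t^\lambda$, and II plays the image point within each touched block under the fixed isomorphism.

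The hypothesis $\lambda \ge \omega$ is used essentially to guarantee that II has room to manoeuvre inside each block: writing $\lambda = \omega + \mu$, we have $M_t^\lambda \cong M_t^\omega \cdot M_t^\mu$, and within the $M_t^\omega$ factor the argument of Lemma \ref{2.7}(i) (or, equivalently, iterating Lemma \ref{2.9}) shows that any interval between played points can be matched on both sides up to whatever finite equivalence level the remaining moves demand. The chief obstacle will be coordinating the two phases when I plays at or near a block boundary, so that II's reply simultaneously respects the running block-configuration isomorphism and the intra-block correspondences. This I would handle by induction on $n$ together with repeated application of Lemma \ref{2.9}, which ensures that at each move enough of the remaining budget can be devoted to the inner game in the relevant block(s) to preserve both invariants.
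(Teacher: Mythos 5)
You have the right general shape --- reduce by Lemma \ref{1.1}(i) to the game on $M_t^\lambda\cdot X$ versus $M_t^\lambda\cdot Y$ and then play a two-phase strategy --- but your outer phase rests on a false claim, and that claim is exactly where the content of the theorem lies. You assert that player II can maintain an order-preserving correspondence between the touched blocks because any finite block-configuration ``embeds from one into the other with room to spare.'' In general $X$ and $Y$ are \emph{not} $n$-equivalent for large $n$ (cf.\ Theorems \ref{3.4} and \ref{3.5}, which give finite values of $l(X,Y)$ when $X\not\cong Y$), and player I can break any block correspondence within two rounds. Concretely, if $M_{t+1}=\omega$ then $Y=\omega^{n_{t+1}}$ has a least element while $X=\omega^{m_{t+1}}\cdot(\omega^*)^{m_{t+2}}$ does not (so your ``matching leftmost endpoint structure'' also fails): player I plays in the least block on the $Y$ side, and whatever block II answers in on the $X$ side has blocks below it, into which I plays next; II is then forced to send two distinct blocks of $X$ to a single block of $Y$. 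At that moment your inner-phase prescription (``play the image point under the fixed isomorphism'') is no longer available, and II survives only because the interval he must now match inside one block of $B$ against an interval spanning several blocks of $A$ is still $r$-equivalent for the $r$ moves remaining. That is the real role of $\lambda\ge\omega$: exploiting a block-level discrepancy costs player I at least $2k$ further moves, where $k$ is the depth to which he must descend inside a block (Lemmas \ref{2.7} and \ref{2.9}), and $\lambda\ge\omega$ lets II push $k$ beyond any bound fixed in advance. Your closing paragraph names the right lemmas but assigns them the wrong job (smoothing block boundaries rather than rescuing II after the block correspondence necessarily collapses), so the argument as written does not go through.

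The paper makes the quantitative point in one line: since $\lambda\ge\omega$, for every finite $k$ one may write $M_t^\lambda=M_t^k\cdot M_t^{\lambda'}$ and absorb $M_t^k$ into the common finite-exponent prefix, so that Theorem \ref{3.2}(i) gives $A\equiv_{2(m_0+\cdots+m_{t-1}+k)}B$; letting $k$ grow yields $A\equiv_n B$ for all $n$. Repairing your outer phase along the lines above would amount to reproving that instance of Theorem \ref{3.2}.
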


\begin{proof} By Theorem \ref{3.2}, for each $k$, $A \equiv_{2(m_0 + m_1 + \ldots + m_{t-1} + k)} B$, and hence $A \equiv_n B$ for all $n$.   \end{proof}

\vspace{.1in}

We remark that the same method shows that 
$M_0^{m_0} \cdot M_1^{m_1} \cdots M_{t-1}^{m_{t-1}} \cdot M_t^\lambda \cdot M_{t+1}^{m_{t+1}} \equiv M_0^{m_0} \cdot M_1^{m_1} \cdots M_{t-1}^{m_{t-1}} \cdot M_t^\mu \cdot M_{t+1}^{n_{t+1}}$. 

\vspace{.1in}

By Corollary \ref{3.8}, the simple form of a monomial which has at least one term  with infinite  exponent is obtained by replacing the first such term by the one with exponent $\omega$, replacing the next 
term if any by one with exponent 1, and removing all subsequent terms.

\begin{example} \label{3.10} {\rm (i)}  $\omega^\omega \cdot \omega ^ \ast \equiv \omega^\omega \cdot (\omega^*)^n$ for any non-zero ordinal $n$.

{\rm (ii)}  $\omega^\omega \cdot (\omega^*)^{n_1} \cdot \omega^{n_2} \equiv \omega^\omega \cdot \omega^\ast $.

{\rm (iii)}  $M_0^{n_0} \cdot M_1^{n_1} \cdots M_s^{n_s} \cdot \omega^\omega \cdot \omega^\ast  
\equiv M_0^{n_0} \cdot M_1^{n_1} \cdots M_s^{n_s} \cdot \omega^\omega \cdot (\omega^*)^{k_1} \cdot \omega^{k_2} \cdot (\omega^*)^{k_3} \cdots$.

{\rm (iv)}  $M_0^{n_0} \cdot M_1^{n_1} \cdots M_s^{n_s} \cdot \omega^\omega \cdot \omega^\ast \equiv M_0^{n_0} \cdot M_1^{n_1} \cdots M_s^{n_s} \cdot \omega^\alpha \cdot \omega^\ast $ for any 
$\alpha \ge \omega$. 
\end{example}

\begin{theorem} Any member of $\Sigma$ has a unique simple form.  \label{3.11} \end{theorem}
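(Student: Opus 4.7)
The plan is to show that if $A, B \in \Sigma$ are both in simple form and $A \equiv B$, then $A = B$. Together with existence of the simple form via the reductions of Example \ref{3.10} and conditions (a)--(c), this yields uniqueness. I would partition simple-form members of $\Sigma$ into three shapes: (i) all exponents finite with a product of finite length; (ii) all exponents finite with a product of length $\omega$; and (iii) at least one infinite exponent, in which case by Example \ref{3.10} the monomial terminates within at most one further term.

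First I would argue that $A$ and $B$ must occupy the same shape. A shape-(i) monomial has a distinguishable least or greatest element depending on its extremal factors, whereas a shape-(ii) monomial lacks this pattern in the same location; and a shape-(iii) monomial has strictly greater Hausdorff rank than any shape-(i) or shape-(ii) monomial. Each such distinction produces a finite $n$ with $A \not\equiv_n B$ via the game arguments of Lemma \ref{2.9} and Theorem \ref{3.5}, contradicting $A \equiv B$. The presence or absence of a trailing $\zeta$-factor is similarly detectable by a game argument in the style of Theorem \ref{2.3}, and a trailing finite ordinal (allowed only on the extreme right by (a)) is forced by the multiplicity of isolated least or greatest elements in the appropriate layer of the sum decomposition.

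Once $A$ and $B$ share a common shape, I apply the earlier classification: Corollary \ref{3.6} handles shape (i), Corollary \ref{3.7} handles shape (ii), and for shape (iii) I strip off the common finite-exponent prefix (whose length, namely the position of the first infinite exponent, must match by combining Lemma \ref{1.2} with Lemma \ref{2.9}) and then observe that the remaining at-most-two terms are determined: the infinite exponent is $\omega$ by (b), the trailing term if any has exponent $1$ by the reduction of Example \ref{3.10}, and the types of these remaining factors in $\{\omega, \omega^\ast\}$ are forced by the alternation requirement together with the agreement of the previous term. The main obstacle is pinning down that the first infinite-exponent position is elementarily detectable: the rank of a finite-exponent tail is bounded by its accumulated finite exponents (by Lemma \ref{2.9}), whereas an $\omega$-exponent term produces strictly greater Hausdorff rank, and so a sufficiently deep game probe exposes any mismatch. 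Once this step is in hand, the remainder of the proof reduces to routine applications of the results established earlier in this section.
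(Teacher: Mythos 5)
Your proposal is correct and follows essentially the same route as the paper, whose own proof of Theorem \ref{3.11} simply cites Theorems \ref{3.4}, \ref{3.5}, \ref{3.9} and Corollary \ref{3.8}: one shows that distinct simple forms are pairwise elementarily inequivalent by the classification results of Section 3, exactly as you outline. The one caution is that your appeal to Hausdorff rank is not by itself a valid discriminator, since rank is not an elementary invariant (e.g.\ $\omega^\omega \equiv \omega^\omega \cdot 2$ despite differing ranks), but this does no harm because in each case you also supply the correct game-theoretic justification via Lemma \ref{2.9} and the strategies of Theorems \ref{3.4} and \ref{3.5}.
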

\begin{proof}  This is a consequence of Theorems \ref{3.4}, \ref{3.5}, \ref{3.9}, and Corollary \ref{3.8}.  \end{proof} 

\begin{theorem} Any monomial product is elementarily equivalent some monomial product of rank at most $\omega + 1$.  \label{3.12}  \end{theorem}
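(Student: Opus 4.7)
The plan is to derive Theorem \ref{3.12} directly from Theorem \ref{3.11} and an elementary Hausdorff-rank calculation based on Definition \ref{2.1}. Given any monomial $A$, first apply the three equivalence-preserving reductions (a)--(c) used to define $\Sigma$: the identity $n \cdot M \equiv M$ to remove interior finite-ordinal factors, Lemma \ref{1.2} to cap every exponent at $\omega$, and Corollary \ref{2.6} to push any $\zeta$ to the outermost position. This yields $A \equiv A_0$ with $A_0 \in \Sigma$, so it suffices to prove the rank bound for an elementarily equivalent monomial of each $A_0 \in \Sigma$.

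Next I would invoke Theorem \ref{3.11} to replace $A_0$ by its unique simple form. Reading off the classification from Corollaries \ref{3.6}--\ref{3.8}, Theorem \ref{3.9} with the remark following it, and Example \ref{3.10}, the simple form falls into one of a small list of canonical shapes (each possibly followed by a single outermost $\zeta$-factor): a finite product $M_0^{m_0} \cdots M_{s-1}^{m_{s-1}}$ of alternating $\omega$- and $\omega^*$-factors with finite exponents; an $\omega$-length product $M_0^{m_0} \cdot M_1^{m_1} \cdots$ of the same type; or an infinite-exponent product $P \cdot M_t^\omega \cdot M_{t+1}$ in which $P$ is a finite-exponent prefix and $M_{t+1}$ (if present) carries exponent $1$.

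I would then compute the rank of each shape directly from Definition \ref{2.1}, using throughout the basic fact that an $\omega$-, $\omega^*$-, $\zeta$-, or $n$-indexed sum of structures of rank $\alpha$ lies in $V_{\alpha+1}$. A finite product has finite rank $m_0 + \cdots + m_{s-1} < \omega$; an $\omega$-length product has rank exactly $\omega$, obtained as the supremum of its finite partial-product ranks; and for the infinite-exponent shape, the inner factor $M_t^\omega = \sum_{n<\omega} M_t^n$ lies in $V_\omega$, prepending the finite-exponent prefix $P$ adds only finitely many further $\omega$- or $\omega^*$-layers and so keeps the structure in $V_\omega$, while the outer $M_{t+1}$ (if present) promotes the rank to exactly $\omega+1$.

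The step I expect to require the most care is the combined case in which the simple form has the infinite-exponent shape \emph{and} an outermost $\zeta$-factor, since the naive rank bound there is $\omega+2$. I would handle this by decomposing $\zeta = \omega^* + \omega$ and distributing on the outside, then applying Lemma \ref{1.1} together with Example \ref{3.10}(i,iv) and Theorem \ref{3.9} to absorb each of the two resulting summands back into an equivalent infinite-exponent simple form of rank $\omega+1$; adding the two yields an equivalent monomial of rank at most $\omega+1$, completing the argument. Everything else in the proof is bookkeeping against Definition \ref{2.1}.
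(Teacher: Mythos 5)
Your overall route is the same as the paper's: reduce to the simple form via the $\Sigma$-normalisations and Theorems \ref{3.4}, \ref{3.5}, \ref{3.9}, Corollary \ref{3.8}, and then read off the Hausdorff rank of each canonical shape from Definition \ref{2.1}. The paper's proof is exactly this (stated as a one-line citation plus the remark that $\zeta$ adds at most $1$ to the rank and that no terms survive after $\zeta$ in a simple form), so the bulk of your argument is fine and the rank bookkeeping for the finite, $\omega$-length, and infinite-exponent shapes is the intended content.

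The last paragraph, however, is both unnecessary and wrong as written. It is unnecessary because the combined case you worry about does not occur: the infinite-exponent reduction (the remark after Corollary \ref{3.8}) replaces the first infinite-exponent term by exponent $\omega$, keeps at most one further term with exponent $1$, and \emph{deletes everything after that}; so a $\zeta$ occurring after the first infinite-exponent term either disappears or is itself the surviving term $M_{t+1}$, giving $P \cdot M_t^{\omega} \cdot \zeta$ of rank $\omega+1$, while a $\zeta$ occurring before any infinite exponent absorbs (via Corollary \ref{2.6}) the entire infinite-exponent tail. The shape $P \cdot M_t^{\omega} \cdot M_{t+1} \cdot \zeta$ with $M_{t+1} \neq \zeta$ is never a simple form. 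It is wrong because your proposed repair fails on its own terms: writing $X \cdot \zeta = X \cdot \omega^{*} + X \cdot \omega$ and reducing each summand to something of rank $\omega+1$ produces, by Definition \ref{2.1}, a two-term sum of members of $V_{\omega+1}$, which lies only in $V_{\omega+2}$; and in any case the result is a \emph{sum} of two monomials, not a monomial product, so it cannot serve as the witness demanded by the statement. You should delete that paragraph and instead note the order in which the reductions are applied, as above.
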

\begin{proof}  This also follows from \ref{3.4}, \ref{3.9}, \ref{3.9}, and \ref{3.8}, and since, as we saw earlier, $\zeta$ can only cause the rank of a monomial product to grow by at most 1 up to elementary 
equivalence and that no terms that come after $\zeta$ appear in the simple form of any given monomial.  \end{proof}

\vspace{.1in}

From the above work, we draw the following conclusions.

Any $X \in \Sigma$ is elementarily equivalent to a linear ordering of rank at most $\omega + 1$. Hence for monomials involving $\omega^\ast, \omega$, and $n$, the bound on the rank of such monomials modulo 
elementary equivalence is the same as the bound on the rank of ordinals modulo elementary equivalence. The presence of $\zeta$ causes rank to grow by at most 1 up to elementary equivalence. In fact, the 
bound on the rank of members of the new class of monomials  up to elementary equivalence is still $\omega + 1$.

Briefly considering the non-scattered case, we let $\Sigma_q$ be the monomial class of countable linear orderings which allows $\eta$, the order-type of the rationals, to appear as a term, that is, 
$\Sigma_q$ is a class of linear orderings that are products of $\omega^{r_i}, (\omega^\ast)^{s_i}, \zeta^{t_i}, \eta$ and $n$ where $s_i, k_i$ and $t_i$ are non-zero ordinals. We note that any 
$L \in \Sigma_q$ is either a scattered countable monomial (if $\eta$ does not appear), a dense countable monomial or falls in between. In fact as in Corollary \ref{2.6}, if $\eta$ occurs at all, we may 
suppose up to elementary equivalence that it occurs just once, and at the end, so that the monomial has the form $A \cdot \eta$, where $A$ is scattered.

\textbf{\section{\textbf{On sums of monomials}}}

The next stage in building scattered orderings according to Definition \ref{2.1} is to look at sums of monomials over $\omega, \omega^*$, $\mathbb Z$, or a finite set as index set. The general case is too 
complicated for our current techniques, so we just give a few illustrative special cases. We assume non-redundancy of the expression as in the following definition.

\vspace{.1in}

\begin{definition} \label{4.1} \textit{A monomial sum}, $W$, is a linear ordering which is a sum of monomials over an index set $\omega, \omega^*$, $\mathbb Z$, or $m \in \omega$, such that 
if $M$ and $N$ are consecutive terms of $W$, then $M+N \neq M, N$. \end{definition}

Each monomial occurring in a monomial sum is a \textit{term}. First we look at the cases where the terms are just powers of $\omega$ or $\omega^*$, and the index set is $\mathbb Z$. We remark on a special 
case, relevant for what follows, in which the terms on the right are eventually all powers $\omega^{n_i}$ of $\omega$, from the $k$th on, say. The definition of `monomial sum' entails that 
$n_k \ge n_{k+1} \ge \ldots$, so this sequence is eventually constant, from the $l$th term on, say. Then $\omega^{n_l} + \omega^{n_{l+1}} + \ldots = \omega^{n_l} \cdot \omega = \omega^{n_l + 1}$. The 
$\mathbb Z$-indexed sum thus {\em could} be replaced by an $\omega^*$-indexed one (though we do not do this, as we are seeking to compare similar sums of non-trivial terms). If however beyond the $k$th, all
terms are powers of $\omega^*$, the powers could increase. Dual remarks apply to behaviour on the left, with the roles of $\omega$ and $\omega^*$ interchanged.

\begin{lemma} \label{4.2} If $A = \sum \{t_i: i \in {\mathbb Z}\}$ and $B = \sum \{u_i: i \in {\mathbb Z}\}$ are monomial sums, and there are natural numbers $m_i$ and $n_i$ such that for each $i$, 
$t_i = \omega^{m_i}$ or $(\omega^*)^{m_i}$, and $u_i$ is either $\omega^{n_i}$ or $(\omega^*)^{n_i}$, then $A \cong B$ if and only if there is an automorphism $\theta$ of $\mathbb Z$ such that for all $i$, 
$u_i = t_{\theta i}$. \end{lemma}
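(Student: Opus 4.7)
The plan for ``$\Longleftarrow$'' is immediate: a shift automorphism $\theta$ of $\mathbb{Z}$ with $u_i = t_{\theta(i)}$ allows me to glue the obvious isomorphisms $t_i \to u_{\theta(i)}$ into an isomorphism $A \to B$.

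For ``$\Longrightarrow$'', given an isomorphism $\varphi: A \to B$, the central task is to recover the block decomposition intrinsically, so that $\varphi$ must respect it. I would use the following order-theoretic notion: call an interval $I$ of a linear order \emph{left-indecomposable} if every proper final segment $\{y \in I: y > x\}$ of $I$ is order-isomorphic to $I$; define \emph{right-indecomposable} dually. Since $\alpha + \omega^m = \omega^m$ for every $\alpha < \omega^m$, each $\omega^{m_i}$-block is left-indec, and by the reversal formalism of Lemma \ref{1.5} each $(\omega^*)^{m_i}$-block is right-indec. The key step is to show that in the monomial sum $A$, these blocks are \emph{maximal} under inclusion as left- or right-indecomposable intervals: any attempt to enlarge a block by adjoining points from an adjacent block of the opposite type, or across a ``gap''-type boundary (where $\omega^{m_i}$ meets $(\omega^*)^{m_{i+1}}$), introduces some tail or head of a type different from the whole, breaking indecomposability; conversely, no interval straddling a proper boundary enjoys either property.

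Since both indecomposability and inclusion-maximality are order invariants preserved by $\varphi$, it must send blocks of $A$ onto blocks of $B$. The induced map $\theta: \mathbb{Z} \to \mathbb{Z}$ on block indices is order-preserving, hence an automorphism of $\mathbb{Z}$, i.e.\ a shift. Moreover, the order types $\omega^m$ and $(\omega^*)^n$ are pairwise non-isomorphic (distinguished by being well-ordered versus reverse well-ordered, and by Hausdorff rank), so $\varphi|_{t_i}$ being an isomorphism onto $u_{\theta(i)}$ forces $t_i = u_{\theta(i)}$.

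The main obstacle will be the boundaries of ``jump'' type---where an $(\omega^*)^{m_i}$-block is immediately followed by an $\omega^{m_{i+1}}$-block via a pair of adjacent points. There, a left-indec $\omega^{m_{i+1}}$-type interval containing the $\omega$-block can absorb finitely many top points of the preceding $\omega^*$-block, since $n + \omega^{m_{i+1}} \cong \omega^{m_{i+1}}$ for any finite $n$. My resolution will be to observe that although the blocks as \emph{point sets} are ambiguous up to finite perturbation at such boundaries, the \emph{sequence} of order types $(t_i)$ is invariant under these perturbations (removing finitely many top points from $(\omega^*)^{m_i}$ preserves its type, and adjoining finitely many bottom points to $\omega^{m_{i+1}}$ preserves its type), which is precisely what the statement asserts.
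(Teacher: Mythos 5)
Your overall architecture (intrinsic characterization of the blocks as maximal left-/right-indecomposable intervals, so that any isomorphism must respect them) is genuinely different from the paper's proof, which instead takes an arbitrary isomorphism $\varphi$, looks at the finite set of $u_j$'s met by $\varphi(t_i)$, and pins down that intersection pattern by direct ordinal arithmetic. However, as it stands your argument has a real gap at exactly the point you flag. At a ``jump'' boundary the blocks are \emph{not} maximal left-indecomposable intervals, and worse, no maximal one exists: if $t_{i-1}$ has a last element then $n+\omega^{m_i}$ is left-indecomposable for \emph{every} finite $n$, so the chain $\omega^{m_i}\subsetneq 1+\omega^{m_i}\subsetneq 2+\omega^{m_i}\subsetneq\cdots$ has no maximal member (its union $\omega^*+\omega^{m_i}$ is not left-indecomposable). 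Hence the sentence ``indecomposability and inclusion-maximality are preserved by $\varphi$, so it sends blocks onto blocks'' has no content at these boundaries, and your proposed repair is circular: the observation that the type sequence is invariant under \emph{finite} perturbation only helps once you have shown that $\varphi(t_i)$ differs from some $u_j$ by a finite set, which is precisely what the maximality argument was supposed to deliver. To close this you would need to prove, say, that a left-indecomposable interval of $B$ which is not properly contained in a left-indecomposable interval with infinitely many additional points must agree with some $u_j$ up to a finite set --- and at that point you are doing the same case analysis as the paper.

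The second, related, gap is that you never invoke the non-redundancy clause of Definition \ref{4.1} ($M+N\neq M,N$ for consecutive terms), yet without it the lemma is simply false: e.g.\ the $\mathbb Z$-sum with repeating pattern $(\omega^*)^5,\ \omega,\ \omega^2$ is isomorphic to the one with repeating pattern $(\omega^*)^5,\ \omega^2$ because $\omega+\omega^2=\omega^2$, and no index bijection exists. So any correct proof must use this hypothesis somewhere; in your framework it is exactly what prevents a left-indecomposable interval from swallowing an entire adjacent block (in the paper it appears as the step ruling out $\varphi(t_i)$ meeting more than two consecutive $u_j$'s infinitely). Two smaller points: with ``natural numbers $m_i$'' the hypothesis admits singleton blocks $\omega^0=1$, which are neither left- nor right-indecomposable under your definition and need separate treatment (the paper devotes a paragraph to them); and an order-preserving map $\mathbb Z\to\mathbb Z$ need not be an automorphism, so the surjectivity of $\theta$ requires running your argument on $\varphi^{-1}$ as well.
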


\begin{proof} Automorphisms of $\mathbb Z$ are of course just translations. Clearly if there is an automorphism of $\mathbb Z$ as stated, then $A \cong B$. Conversely, suppose that $\varphi: A \to B$
is an isomorphism. We shall show that for each $i$, $\varphi(t_i) = u_j$ for some $j$, and the result then follows on letting $\theta(i) = j$.

First suppose that $t_i$ is a singleton, i.e. $m_i = 0$, and let $j$ be such that $\varphi(t_i) \subseteq u_j$. We show that $u_j$ is also a singleton, which establishes that $\varphi(t_i) = u_j$. If this 
does not hold, then $\varphi(t_i)$ must be a proper subset of $u_j$, so $n_j \neq 0$. Assume without loss of generality that $u_j = \omega^{n_j}$. Not all $t_k$ for $k \ge i$ can be singletons. For then they
would all have to map into $u_j$ under $\varphi$, and $u_{j+1}$ would be disjoint from the image of $\varphi$. Let $k > i$ be least such that $t_k$ is not a singleton. Then $t_k = (\omega^*)^{m_k}$ with
$m_k > 0$ (as otherwise, the singletons would be `absorbed' into $t_k$, contrary to the definition of `monomial sum'). But since $\varphi$ is an isomorphism, and the points of $B$ greater than 
$\varphi(t_i)$ begin with an $\omega$-sequence, this is impossible. 

Next assume that $m_i \neq 0$. Now $\varphi(t_i)$ intersects a finite non-empty set $J_i$ of $u_j$s. This $J_i$ is minimal such that $\varphi(t_i) \subseteq \bigcup_{j \in J_i}u_j$. Suppose without loss of 
generality that $t_i = \omega^{m_i}$, and let $j$ and $k$ be the least and greatest of $J_i$ respectively. Then each $\varphi(t_i) \cap u_r$ is well-ordered. If $j < r < k$, $u_r \subseteq \varphi(t_i)$, 
and so $u_r = \omega^{n_r}$. At the left hand end point, $I = \varphi(t_i) \cap u_j$ is a final segment of $u_j$, so either $u_j = \omega^{n_j}$, in which case $\varphi(t_i) \cap u_j \cong \omega^{n_j}$ or 
$u_j = (\omega^*)^{n_j}$ and $\varphi(t_i) \cap u_j$ is finite. At the right hand end point, $F = \varphi(t_i) \cap u_k$ is an infinite initial segment of $u_k$ isomorphic to $\omega^{m_i}$, so that 
$u_k = \omega^{n_k}$ and $m_i \le n_k$. Thus $\varphi(t_i) = I \cup \omega^{n_{j+1}} \cup \ldots \cup \omega^{n_{k-1}} \cup F \cong \omega^{n_j} + \omega^{n_{j+1}} + \ldots + \omega^{n_{k-1}} + 
\omega^{m_i}$, or $I + \omega^{n_{j+1}} + \ldots + \omega^{n_{k-1}} + \omega^{m_i}$ where $I$ is finite. In each case, this is isomorphic to $\omega^{m_i}$, so by definition of $B$ a monomial sum, either 
$j = k$, or $k = j+1$ and $I$ is finite. To sum up, there are a finite set $I$ and an initial segment $F$ of $\omega^{n_k}$ isomorphic to $\omega^{m_i}$ such that $\varphi(t_i) = I \cup F$. Applying the 
same argument to $\varphi^{-1}u_k$ (noting that $u_k$ cannot be a singleton) we see that actually $m_i = n_k$, and hence $\varphi(t_i) = u_j$ as required.            \end{proof} 

\begin{lemma} \label{4.3} If $\alpha$, $\beta$, and $\gamma$ are ordinals, and $m > 1$, then

(i) $\gamma$ infinite $\Rightarrow \alpha \not \equiv_4 \beta + \omega^m + \gamma^*$,
 
(ii) $\beta$ infinite $\Rightarrow \alpha \not \equiv_4 \beta + (\omega^*)^m + \gamma^*$. \end{lemma}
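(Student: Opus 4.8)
The plan is to give player I a short, uniform winning strategy against an \emph{arbitrary} ordinal $\alpha$, built around a single local feature that $B$ possesses but no well-ordering does: a point with no immediate successor that is not the largest element, i.e.\ a point $p$ with $\{x:x>p\}$ non-empty but having no least element. In any ordinal every non-maximal point $a$ has an immediate successor (the least element above it), so this configuration cannot be reproduced in $\alpha$. First I would show how such a $p$ yields a win for I in three moves, hence \emph{a fortiori} $\alpha\not\equiv_4 B$, whatever $\alpha$ is. Player I plays $p$, and lets $a_1$ be II's reply in $\alpha$. If $a_1$ is the greatest element of $\alpha$, I plays any point of $B$ above $p$ and wins at once, since II cannot go above $a_1$. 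Otherwise $a_1$ has an immediate successor in $\alpha$, which I plays; II must answer with some $b_2>p$, and since $p$ has no immediate successor, $(p,b_2)\neq\emptyset$. Player I then plays a point of $(p,b_2)$, and II has to reply inside $(a_1,a_1+1)=\emptyset$, so II loses.

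It then remains to locate such a $p$ in each case. Since the well-ordered initial part ($\beta+\omega^m$ in (i), $\beta$ in (ii)) contains no point of this kind, $p$ must come from the reversed blocks. In (ii) this always succeeds, precisely because $m>1$: by Lemma \ref{1.5} we have $(\omega^\ast)^m=(\omega^m)^\ast$, and the element of $(\omega^m)^\ast$ corresponding to the ordinal $\omega<\omega^m$ has $\{x>p\}=\omega^\ast$, which has no least element. In (i) the element of $\gamma^\ast$ corresponding to a non-zero limit ordinal below $\gamma$ works in the same way, and such a limit ordinal is available as soon as $\gamma>\omega$.

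The main obstacle is therefore the borderline configuration of (i) with $\gamma=\omega$, where $B=\beta+\omega^m+\omega^\ast$ has no point of the above form at all: the trailing $\omega^\ast$ contributes only points with immediate successors, and $\beta+\omega^m$ has none. Here the sole witness to non-well-ordering is the two-sided gap $G$ between $\beta+\omega^m$ and $\omega^\ast$: it is approached from below by a \emph{cofinal sequence of limit ordinals} --- this is exactly where $m>1$ is needed, since for $m\ge 2$ the ordinal $\omega^m$ is a limit of limit ordinals --- and from above by the descending $\omega^\ast$, which has no least element. The plan is to have I straddle $G$, playing a point of $\omega^\ast$ together with a limit ordinal of $\omega^m$ just below $G$, so as to cut out an interval of type $[\text{no greatest}]+[\text{no least}]$, and then to play the limit-of-limits structure below $G$ off against the bare $\omega^\ast$ above it in the remaining moves.

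I expect the delicate heart of the whole argument to be the verification that four moves genuinely suffice in this last case, uniformly in $\alpha$: one must check that player II cannot exploit the fact that the ordinal part below $G$ is identical in $\alpha$ and $B$ to postpone the mismatch, and it is here, and only here, that the hypothesis $m>1$ is essential (for $m=1$ the gap is approached from below merely by an $\omega$ of successors, and no such inequivalence holds). This boundary case is also where one should double-check the exact constant $4$ in the statement, since everywhere else the sharper bound $\alpha\not\equiv_3 B$ already follows from the three-move strategy above.
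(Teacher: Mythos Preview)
Your three-move strategy---play a point $p\in B$ with no immediate successor that is not the maximum, then force II into the empty interval $(a_1,a_1+1)$---is correct, and it does dispose of (ii) entirely (using $m>1$ to find $p$ inside $(\omega^*)^m$) and of (i) whenever $\gamma>\omega$. You have also correctly isolated the residual case $\gamma=\omega$ in (i), where $B=\beta+\omega^m+\omega^*$ contains no such $p$.

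The gap is that you do not actually prove this residual case; you only sketch a plan and flag it as ``the delicate heart of the whole argument''. In fact your plan, as stated, does not work in four moves. If I spends moves 1 and 2 in $B$ straddling the gap $G$ (some $b_1\in\omega^*$ and a limit $b_2\in\omega^m$), the resulting interval $(b_2,b_1)$ has order type $\omega^m+\omega^*$, which has both a least and a greatest element and is therefore $\equiv_2$ to any successor ordinal $\ge 3$; so the remaining two moves cannot distinguish it from II's interval $(a_2,a_1)$ if II has been careful. More adaptive variants (e.g.\ playing $b_1=\max B$ first, then walking down by predecessors) run into the same problem: II can keep matching with successor ordinals and survive four rounds.

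The paper avoids this difficulty by having player I open in $\alpha$ rather than in $B$. Write $\alpha=\lambda+k$ with $\lambda$ a limit or $0$ and $k$ finite. If $k=0$, I plays $\max B$ and wins in two moves. If $k>0$, I plays $a_1=\lambda$, which has no immediate predecessor. Now if II replies with $b_1\in\gamma^*$, that point \emph{always} has an immediate predecessor (since $\gamma^*$ has no least element), and I wins in three moves; if II replies with $b_1\in\beta+\omega^m$, then because $m>1$ there is a limit $b_2>b_1$ in $\omega^m$, and II's reply $a_2>a_1$ lies among the final $k$ points of $\alpha$, hence is a successor, so I wins in four moves via its predecessor. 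This handles all infinite $\gamma$ uniformly, with no separate $\gamma=\omega$ analysis. The moral is that fixing a foothold in $\alpha$ at the limit point $\lambda$ is what makes the four-move bound go through; starting in $B$ gives II too much freedom in $\alpha$.
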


\begin{proof} We treat (i) and (ii) simultaneously. Let us write $\alpha = \lambda + k$ where $\lambda$ is a limit ordinal or zero and $k$ is finite. If $k = 0$, I plays the greatest point of $\gamma^*$
and wins in 2 moves. If $k > 0$, I plays the first point $a_1$ of the final $k$-block of $\alpha$. If II replies with $b_1$ in $\gamma^*$, (in (i)) or $(\omega^*)^m + \gamma^*$ (in (ii)), and $b_1$ is
not least in this set, I plays its predecessor $b_2$, and as $a_1$ has no predecessor, wins in 3 moves. If $b_1$ is the least in $\gamma^*$ (this case does not arise in (ii)), I plays the $\omega$th
point of $\gamma^*$ from the right (which may equal $b_1$). Note that this has no successor. Then if $a_2$ is II's reply, either $a_2$ is the maximum of $\alpha$, in which case I wins on the next move, or
else I can play its successor, and wins in 4 moves.

Otherwise, $b_1 \in \beta + \omega^m$ (in (i)), or $\beta$ (in (ii)). Now in case (i) I plays some $b_2 > b_1$ in $\beta + \omega^m$ with no predecessor, possible since $m > 1$. Whatever $a_2 > a_1$ I plays
in $\alpha$ has no predecessor, so I wins in 4 moves. In case (ii), player I instead plays some $b_2$ in the $(\omega^*)^m$ block with no successor, and whatever $a_2 > a_1$ I plays is either greatest in
$\alpha$ or has a successor, so I wins in 2 more moves.  \end{proof} 

\vspace{.1in}

For technical reasons, which will be explained, we focus of $\omega$-sums. Before formulating our main result on these, or indeed on $\mathbb Z$-sums, we have to deal with the fact that by Lemma \ref{2.5}, 
$\omega + \omega^* + \omega \equiv \omega$, which means that we have to make a further restriction in the monomial sums which arise in our main theorem. We summarize these in the following result.

\begin{lemma} \label{4.4} If $k, l, m, n \ge 1$, then $\omega^k + (\omega^*)^l + \omega^m \equiv \omega^n \leftrightarrow k = l = 1$ and $m = n$. More precisely, if $m \neq n$ then
$\omega^k + (\omega^*)^l + \omega^m \not \equiv_{2\min(m, n) + 2} \omega^n$, if $l > 1$ then $\omega^k + (\omega^*)^l + \omega^m \not \equiv_3 \omega^n$, and if $l = 1, m = n$, and $k > 1$, then
$\omega^k + (\omega^*)^l + \omega^m \not \equiv_6 \omega^n$.   
\end{lemma}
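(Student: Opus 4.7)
My plan is to handle the equivalence (backward direction) first, then the three inequivalences case by case.

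For the equivalence $\omega + \omega^* + \omega^n \equiv \omega^n$, I would apply Lemma \ref{2.5}(i) with $B = 1$ to get $\omega \equiv \omega + \zeta = \omega + \omega^* + \omega$. For $n = 1$ this is already $A \equiv \omega^1$. For $n \ge 2$, additive indecomposability of $\omega^n$ gives $\omega + \omega^n = \omega^n$, so I can rewrite $A = (\omega + \omega^* + \omega) + \omega^n$ and apply Lemma \ref{1.1}(i) to replace the parenthesized term by $\omega$, yielding $A \equiv \omega + \omega^n = \omega^n$.

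For $A \not\equiv_{2\min(m,n)+2} \omega^n$ when $m \ne n$, my strategy for player I is to open with $a_1$ equal to the first point of the rightmost $\omega^m$-block of $A$, so that $(a_1, \infty)_A \cong \omega^m$. By indecomposability of $\omega^n$, any reply $b_1 \in \omega^n$ satisfies $(b_1, \infty)_B \cong \omega^n$. The residual game on these right-intervals is then $\omega^m$ versus $\omega^n$, and Lemma \ref{1.3}(ii) (taking the base exponent to be $\min(m,n)$ and $\beta = \omega^{|m-n|} > 1$) gives I a winning strategy there in $2\min(m,n)+1$ moves. Since all of I's subsequent moves stay in the right-intervals, order preservation forces II's replies there as well, so the subgame win lifts to a win in $2\min(m,n) + 2$ total moves.

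For $A \not\equiv_3 \omega^n$ when $l > 1$, the strategy exploits that $(\omega^*)^l$ contains an element with no immediate successor in $A$: the maximum of the second-from-top block, namely the point $(0, \ldots, 0, 1) \in (\omega^*)^l$, has above it only the top block, which is isomorphic to $(\omega^*)^{l-1}$ and has no minimum because $l - 1 \ge 1$. Player I plays this $a_1$; the reply $b_1 \in \omega^n$ has an immediate successor $b_1 + 1$ by discreteness. On move 2, I plays $b_2 = b_1 + 1$ so that $(b_1, b_2)_B = \emptyset$. Any reply $a_2 > a_1$ by II leaves $(a_1, a_2)_A$ non-empty (since $a_1$ has no immediate successor), so on move 3 player I picks a point there and II has no legal response.

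For $A \not\equiv_6 \omega^n$ when $l = 1$, $m = n$, $k > 1$, the plan is a two-move setup followed by a four-move reduction to Lemma \ref{4.3}(i). Player I opens with $a_1 = 0 \in A$, forcing II to reply $b_1 = 0$ (otherwise I wins on move 2 by playing $0 \in B$). On move 2, I plays $a_2 = $ first point of the final $\omega^n$-block of $A$, and II must respond with some $b_2 \ge 1$. The open intervals then satisfy $(a_1, a_2)_A \cong \omega^k + \omega^*$ (the deleted minimum of $\omega^k$ is absorbed since $1 + \omega^k = \omega^k$) and $(b_1, b_2)_B$ is an ordinal. Applying Lemma \ref{4.3}(i) with $\beta = 0$, $m = k$, and $\gamma = \omega$, all hypotheses are satisfied ($m > 1$ comes precisely from $k > 1$, and $\gamma = \omega$ is infinite), so I wins the subgame in 4 more moves; lifting this to the full game (again by order preservation) gives a win in $2 + 4 = 6$ moves. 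The main delicacy here is recognizing $\omega^k + \omega^*$ in the precise form $\beta + \omega^m + \gamma^*$ required by Lemma \ref{4.3}(i), which works only because $k > 1$.
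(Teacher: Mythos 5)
Your proposal is correct. The backward direction, the $m\neq n$ case, and the $l>1$ case all coincide with the paper's own argument (the paper also reduces $\omega+\omega^*+\omega^m$ to $\omega^m$ via Lemma \ref{2.5}, also opens the $m\neq n$ case with the least point of the $\omega^m$ block and finishes on the right via Lemma \ref{1.3}(ii), and also opens the $l>1$ case with a point of $(\omega^*)^l$ having no immediate successor --- it uses the $\omega$th point from the right where you use the maximum of the second-from-top block, an immaterial difference). Where you genuinely diverge is the $k>1$, $l=1$, $m=n$ case. The paper has player I open with the \emph{last} point of the $\omega^*$ term, so that the right-hand sides match exactly and the action moves to $A^{<a_1}\cong\omega^k+\omega^*$ versus an ordinal $B^{<b_1}<\omega^n$; it then wins by a bare-hands case analysis on the Cantor normal form $\omega^rp_r+\dots+\omega p_1+p_0$ of $B^{<b_1}$, spread over the remaining five moves. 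You instead spend two moves pinning down the minima and the left endpoint of the final $\omega^n$ block, which traps an arbitrary ordinal $(b_1,b_2)$ against $(a_1,a_2)\cong\omega^k+\omega^*$, and then invoke Lemma \ref{4.3}(i) with $\beta=0$, $\gamma=\omega$ as a black box (legitimately: the lemma is quantified over all ordinals $\alpha$, so player I need not predict $b_2$). Both routes give the bound $6$; yours is shorter and reuses machinery the paper already proved and deploys elsewhere in Section 4, at the cost of one extra set-up move that Lemma \ref{4.3}'s budget of $4$ exactly absorbs, while the paper's explicit analysis is self-contained and incidentally exhibits the finer structure of where player II's replies fail.
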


\begin{proof} First we note that if $m \neq n$ then player I can win by playing the least point $a_1$ of the $\omega^m$ block. Whatever point $b_1$ of $\omega^n$ player II plays, then by Lemma \ref{1.3},
player I wins on the right in at most $2\min(m, n) + 1$ more moves, since $A^{>a_1} \cong \omega^m$ and $B^{>b_1} \cong \omega^n$. So from now on assume that $m = n$. If $l > 1$ then player I plays the 
$\omega$th point $a_1$ of $(\omega^*)^l$ from the right, noting that this has no immediate successor. Whatever $b_1 \in \omega^n$ player II plays, $b_1$ has a successor $b_2$, which I now plays, and 
whatever $a_2 > a_1$ II plays in $\omega^k + (\omega^*)^l + \omega^m$, I wins (in 3 moves) by playing between $a_1$ and $a_2$. So from now on we may suppose that $l = 1$ and $m = n$.

If $k > 1$, then player I plays the last point $a_1$ in the copy of $\omega^*$. Let II's reply be $b_1 = \omega^rp_r + \ldots + \omega p_1 + p_0 \in \omega^n$. Since $A^{>a_1} \cong B^{>b_1}$ 
($\cong \omega^n$) we concentrate on the left, where $A^{<a_1} \cong \omega^k + \omega^*$ and for some coefficients $p_i < \omega$, for $i \le r$, where $r < m$, 
$B^{<b_1} \cong \omega^rp_r + \ldots + \omega p_1 + p_0$. If $p_0 = 0$ then player I wins at once since $A^{<a_1}$ has a greatest but $B^{<b_1}$ does not, so we now suppose that $p_0 \neq 0$. If 
$p_r = \ldots = p_2 = p_1 = 0$ then player I plays $a_2 = \omega$ in $A^{<a_1}$. This has no immediate predecessor, but every point of $B^{<b_1}$ does have (or is minimal), and so again I wins quickly. 
Otherwise there is a least $i > 0$ such that $p_i \neq 0$. Player I plays the first point $b_2$ of the last $\omega^i$ block in $B^{<b_1}$. Suppose that Player II plays $a_2$. If $a_2 \in \omega^*$, then 
Player I plays the least point $b_3$ of the final $p_0$ block of $(b_2, b_1)$ and wins since $b_3$ has no immediate predecessor, but any move that II can make {\em does}. Otherwise, $a_2 \in \omega^k$, so 
$(a_2, a_1) \cong \omega^k + \omega^*$ and $(b_2, b_1) \cong \omega^i + p_0$. Player I plays the least point $b_3$ of the final $p_0$ block of $(b_2, b_1)$. Once more, if II plays in $\omega^*$, I wins at
once. If II plays $a_3 \in \omega^k$, then $(a_3, a_1) \cong \omega^k + \omega^*$ and $(b_3, b_1)$ is finite, and again I wins by playing the $\omega$th point of $(a_3, a_1)$.

Finally observe that $\omega + \omega^* + \omega^m = \omega + \omega^* + \omega + \omega^m \equiv \omega + \omega^m = \omega^m$.  \end{proof} 

\vspace{.1in}

The above result explains the reason for the restriction taken on monomial sums. Let us say that a monomial sum is {\em special} if no three consecutive terms are of the form $\omega$, $\omega^*$, 
$\omega^m$ or $(\omega^*)^m$, $\omega$, $\omega^*$ for any $m \ge 1$. Note that we may reduce any monomial sum in the previous sense to an elementarily equivalent special one using Lemma \ref{4.4}. The only 
seeming exception would be if the terms eventually alternate between $\omega$ and $\omega^*$ on the right, or on the left; but then the right is $\omega + \zeta \cdot \omega \equiv \omega$ by Lemma 
\ref{2.5} (and similarly for such behaviour on the left), so we can reduce to a monomial over bounded domain (which we don't actually treat, but we could do so).

\begin{theorem} \label{4.5} If $A = \sum \{t_i: i \in \omega\}$ and $B = \sum \{u_i: i \in \omega\}$ are special monomial sums, and there are natural numbers $m_i$ and $n_i$ such that for 
each $i$, $t_i = \omega^{m_i}$ or $(\omega^*)^{m_i}$, and $u_i$ is either $\omega^{n_i}$ or $(\omega^*)^{n_i}$, then $A \cong B$ if and only if $A \equiv B$. \end{theorem}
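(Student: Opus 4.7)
The direction $A \cong B \Rightarrow A \equiv B$ is immediate, so I plan to argue the contrapositive: assuming $A \not\cong B$, I will produce some $n$ with $A \not\equiv_n B$. First I would verify an $\omega$-sum analogue of Lemma \ref{4.2}: under the special monomial sum hypothesis, $A \cong B$ if and only if $t_i = u_i$ for every $i$. The argument parallels Lemma \ref{4.2}---any isomorphism must carry the $i$-th block of one sum onto the $i$-th block of the other, since the boundaries between consecutive blocks are detectable (as immediate-successor pairs, or cuts of prescribed one-sided cofinality, or as max/min endpoints), and the specialness hypothesis is precisely what rules out collapses like $\omega + \omega^* + \omega \equiv \omega$ from Lemma \ref{2.5} and Lemma \ref{4.4}.

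With uniqueness in hand, I let $k$ be the least index where $t_k \ne u_k$, and write $A = P + S$, $B = P + T$, where $P = t_0 + \cdots + t_{k-1}$, $S = t_k + t_{k+1} + \cdots$, and $T = u_k + u_{k+1} + \cdots$. The plan then has two sub-steps: (i) show $S \not\equiv_m T$ for some $m$, and (ii) lift this to $A \not\equiv_{m+c} B$ for a constant $c$ depending only on $P$.

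For (i), I would proceed by cases on $(t_k,u_k)$. If both are powers of $\omega$ with different exponents, Player I plays a point at the right boundary of $t_k$ (e.g.\ the first element of $t_{k+1}$ when it is an $\omega$-type, or a suitable cofinal point otherwise) to localize the game inside $t_k$ versus $u_k$, and then runs the strategy of Lemma \ref{2.7}(ii) to separate $\omega^{m_k}$ from $\omega^{n_k}$. The case of two $\omega^*$-powers is dual. If instead $t_k$ and $u_k$ differ in type, Player I plays the extremal element of whichever has one on the relevant side; the absence of a matching extremum on the other side forces a quick win, as in Lemma \ref{4.3}. For (ii), Player I runs his $(S,T)$-strategy inside $(A,B)$, and whenever Player II plays in the $P$-part of the opposite structure, Player I spends a matching move in $P$ on the other side; since $P$ is identical in $A$ and $B$ and has bounded complexity, only finitely many such matching moves are ever required, and this is made precise by Lemma \ref{1.1}.

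The main obstacle will be sub-step (i), especially the localization argument: whether the following block $t_{k+1}$ (respectively $u_{k+1}$) is an $\omega$-type or $\omega^*$-type changes which boundary points Player I can safely play to isolate $t_k$ from the rest, so the case analysis must branch on these neighbouring blocks. Sub-step (ii) is conceptually routine, but one must check carefully that Player II cannot exploit moves in $P$ to waste Player I's strategy; the identity of the two copies of $P$, together with the specialness of the sums, is what keeps the extra budget $c$ finite.
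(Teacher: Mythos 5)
There is a genuine gap, and it sits exactly where you have labelled the work ``conceptually routine'': the lifting step (ii). Lemma \ref{1.1} states that $S \equiv_n T$ implies $P + S \equiv_n P + T$; it transfers winning strategies for player \emph{II}, i.e.\ it preserves equivalence, not inequivalence. What you need is the converse-flavoured statement that $S \not\equiv_m T$ forces $P + S \not\equiv_{m+c} P + T$, and this is false for general linear orders: $\omega^* + \omega$ is certainly not elementarily equivalent to the empty order, yet $\omega + (\omega^* + \omega) \equiv \omega + \emptyset$ by Lemma \ref{2.5}. Specialness is designed to block such collapses, but showing that it actually does --- that is, that player I can still win on $P+S$ versus $P+T$ when player II refuses to respect the cut between $P$ and the tails --- is the substance of the theorem, not an appendage to it. Your proposed mechanism (``Player I spends a matching move in $P$ on the other side'') does not address the real danger, which is that player II answers a move in $S$ with a \emph{shifted} point of $T$ (or of $P$) in a way that keeps all finite configurations order-isomorphic for longer than your budget $c$ allows. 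The paper's proof never performs this clean prefix-splitting; instead player I's strategy interleaves play on the common well-ordered initial part (pinning down block boundaries via the strategy of Lemma \ref{1.4}) with punishment moves via Lemma \ref{4.3} and Lemma \ref{4.4} whenever player II strays, and the move count is extracted from that interleaving.

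The localization in sub-step (i) has the same problem one level down. When player I plays a boundary point of $t_k$, player II is under no obligation to answer with the corresponding boundary point of $u_k$; establishing that any other answer loses within a predetermined number of moves is precisely what Cases 3--6 of the paper's argument do, by a case split on where the first reversed block occurs, whether the orders are ordinals up to that point, and an induction on the number of blocks between the first reversed block and the first discrepancy. Note also that your reduction to ``$S \not\equiv_m T$ where $S$ and $T$ differ in their first term'' is not a genuine simplification: it is the original theorem in the case $k = 0$, with all the same difficulties (ordinal versus non-ordinal tails, $\omega^*$ blocks downstream of the discrepancy) still present. Finally, the hard direction of your proposed uniqueness lemma ($A \cong B$ only if $t_i = u_i$ for all $i$) is never needed --- the paper only uses the trivial direction to extract the least $N$ with $t_N \not\cong u_N$ --- so you can drop that entirely; but the two steps above need to be replaced by something with the full force of the paper's case analysis.
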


\begin{proof} Clearly $A \cong B \Rightarrow A \equiv B$, so we concentrate on showing that $A \not \cong B \Rightarrow A \not \equiv B$. We therefore assume that $A$ and $B$ are not isomorphic, and aim to
show that they are not elementarily equivalent, that is, that player I has a winning strategy in some finite Ehrenfeucht-Fra\"iss\'e game on $A$ and $B$. For ease we suppose that whenever $m_i$ or $n_i$ 
equals 0, then the corresponding term is a power of $\omega$ rather than $\omega^*$. Since $A \not \cong B$, there is a least $N$ such that $t_N \not \cong u_N$. We shall describe an Ehrenfeucht-Fra\"iss\'e 
game on a (finite) number of moves on $A$ and $B$, whose precise value (depending on $N$) can be read off from the proof, and which is winning for player I.

\vspace{.05in}

\noindent{\bf Case 1}: $A = \alpha$ and $B = \beta$ are both ordinals. Since $A \not \cong B$, $\alpha \not \cong \beta$. Since $m_0 \ge m_1 \ge m_2 \ge \ldots$, this sequence is eventually constant, and it
follows that $\alpha < \omega^\omega$, and similarly $\beta < \omega^\omega$. Hence, by Lemma \ref{1.4}, player I has a winning strategy.

\vspace{.05in}

\noindent{\bf Case 2}: One of $A$ and $B$, $A$ say, is an ordinal, the other not.  If some member $b_1$ of $B$ has no immediate successor, then player I plays it on his first move. Whatever $a_1 \in A$ 
player II plays, $a_1$ has a successor $a_2$, which I now plays, and whatever $b_2 > b_1$ II plays in $B$, I can play $b_3$ between $b_1$ and $b_2$, and wins on the third move. If however, every member of 
$B$ has a successor, then any $i$ such that $u_i = (\omega^*)^{n_i}$ must have $n_i = 1$, and furthermore this cannot happen for consecutive terms. If such $i$ is 0, then $A$ has a least member and $B$ does 
not, so I wins in 2 moves by playing the least member of $A$ on his first move, following it by a member of $B$ less than whatever II plays. If such $i$ is non-zero, then we have 
$u_{i-1} + u_i + u_{i+1} = \omega^{n_{i-1}} + \omega^* + \omega^{n_{i+1}}$, and as every member of $B$ has a successor, $n_{i-1} > 0$, and as it is special, $n_{i-1} > 1$. On his first two moves, player I 
plays the first point $b_1$ of $u_{i-1}$ and the last point $b_2$ of $u_i$. If $a_1$ and $a_2$ are II's responses in $A$, then $(a_1, a_2)$ is well-ordered, so I wins in 4 more moves by Lemma \ref{4.3}.

\vspace{.05in}

From now on we therefore assume that neither $A$ nor $B$ is an ordinal. Let $\alpha, \beta$ be ordinals (possibly 0) such that for some $i, j$, $\bigcup_{k < i}t_k \cong \alpha$, 
$\bigcup_{k < j}u_k \cong \beta$, $t_i = (\omega^*)^{m_i}$, $u_j \cong (\omega^*)^{n_j}$.

\vspace{.05in}

\noindent{\bf Case 3}: In the notation just introduced, $\alpha \neq \beta$ (which is equivalent to $N < \max(i,j)$). By interchanging $A$ and $B$ if necessary, suppose that $\alpha > \beta$. Then by Lemma 
\ref{1.4}, $\alpha \not \equiv_{2m_0 + i} \beta$. Using the winning strategy provided by the proof of that lemma, I plays the first points $a_1, a_2, \ldots, a_i$ of $t_0, t_1, \ldots, t_{i-1}$ 
respectively, and let II's replies be $b_1, b_2, \ldots, b_i$. If the game continued as far as this, then $\beta \ge \alpha$, contrary to assumption. Hence there is a least $k$ such that 
$b_k \in \bigcup_{l \ge j}u_l$. Note that $a_k \le \min(t_{i-1})$. Now I stops playing the above sequence, and instead plays $b_{k+1} \in u_j$ so that $b_{k+1} < b_k$. Whatever $a_{k+1} < a_k$ II now plays, 
$A^{< a_{k+1}}$ is an ordinal and $B^{< b_{k+1}} \cong \beta + (\omega^*)^{n_j}$, so I can win in 4 more moves by appealing to Lemma \ref{4.3}, except in one case, which is when $u_{j-1} = \omega$ and 
$u_j = \omega^*$ (i.e. $n_j = 1$). But now, as $B$ is a special monomial sum, $u_{j+1}$ must equal $(\omega^*)^{n_{j+1}}$. If $b_{k+1}$ is the maximum of $u_j$, it therefore has no successor, so I wins in 
two more moves by playing the successor $a_{k+2}$ of $a_{k+1}$. If $b_{k+1}$ is not the maximum of $u_j$, then I plays the maximum $b_{k+2}$ of $u_j$ on his next move. As this is the first point of $B$ 
having no immediate successor , to avoid losing quickly, player II must play the first point $a_{k+2}$ of $A$, if any, having no successor (which could only be the greatest point of some $\omega^*$-block
which is followed by an $(\omega^*)^{m_l}$-block), so $a_{k+2} \in \bigcup_{l \ge i}t_l$. Now $(b_{k+1}, b_{k+2})$ is finite, and I plays the first point $a_{k+3}$ of $t_{i-1}$, which lies in 
$(a_{k+1}, a_{k+2})$ since $a_k \le \min(t_{i-1})$. This has no immediate predecessor, and therefore he wins on the next move. 

\vspace{.05in}

\noindent{\bf Case 4}: $\alpha = \beta$ (which implies that $N \ge i$), and for $i \le k \le N$, $t_k = (\omega^*)^{m_k}$ and $u_k = (\omega^*)^{n_k}$. Player I plays the greatest point $a_1$ of $t_N$. If 
II replies with $b_1 \in \alpha$, then I can win by appeal to Lemma \ref{4.3}. Note that this is inapplicable only in the case in which $t_{i-1} = \omega$ and $t_i = \omega^*$; but then, as $A$ is a
special monomial sum, $t_{i+1}$ must equal $(\omega^*)^{m_{i+1}}$, and hence the greatest point $a_2$ of $t_i$ has no successor, and I wins by playing it on his second move. If 
$b_1 \in \bigcup_{i \le k \le N}u_i$ then I restricts the play to $A^{< a_1} \cong \alpha + \gamma^*$ and $B^{< b_1} \cong \alpha + \delta^*$ for non-isomorphic ordinals $\gamma$ and 
$\delta < \omega^\omega$. (If $b_1 \in \bigcup_{i < k < N}u_i$ then $\delta < \gamma$; if $b_1 \in u_N$ then $\gamma \neq \delta$ by definition of $N$.) He plays on $\gamma^* \cup \delta^*$ using a winning 
strategy provided by Lemma \ref{1.4} until II plays in $\alpha$ (in $A$ or $B$), and then he again wins using Lemma \ref{4.3}. If $b_1 \in \bigcup_{l > N}u_l$ then I plays the greatest point $b_2$ of $u_N$, 
and the play proceeds in the way just described with the roles of $A$ and $B$ reversed.

\vspace{.05in}

\noindent{\bf Case 5}: $\alpha = \beta$, and for $i \le k \le N$, $t_k = (\omega^*)^{m_k}$ but $u_N = \omega^{n_N}$ (which implies that for $i \le k < N$, $u_k = (\omega^*)^{m_k}$). Player I again plays
the greatest point $a_1$ of $t_N$, and if II's reply $b_1$ is in $\bigcup_{l < N}u_l$, the same argument as in the previous case applies. In fact this is still true even if $b_1$ lies in the first
$\omega$ points of $u_N$, since then $B^{< b_1} \cong \bigcup_{l < N}u_l$. So we now assume that $b_1 \in \bigcup_{l \ge N}u_l$, and $B^{< b_1}$ has an initial infinite well-ordered set beyond
$\bigcup_{l < N}u_l$.

Player I now plays the least point $b_2$ of $u_N$, and by the arguments given in Case 4, player II must play some point $a_2$ of $t_{N-1}$ (since these are precisely the points for which 
$A^{<a_2} \cong B^{<b_2}$). If there is a least point $b_3$ of $B$ greater than the first $\omega$ points of $u_N$, player I plays this on his third move. Then $b_3$ has no predecessor, but whatever 
$a_3 \in (a_2, a_1)$ II now plays {\em does}, so I wins in two more moves. If there is no least point of $B$ greater than the first $\omega$ points of $u_N$, then $u_N = \omega$, and 
$u_{N+1} = (\omega^*)^{n_{N+1}}$, and as the monomial sums are special, $n_{N+1} > 1$. Player I plays some $b_3 \in u_{N+1}$ less than $b_1$. Now $(b_2, b_3) \cong \omega + (\omega^*)^{n_{N+1}}$ and
whatever $a_3$ II plays, $(a_2, a_3)$ is a reversed ordinal, so player I wins in 4 more moves by Lemma \ref{4.3} (for the reversed ordering).

\vspace{.05in}

\noindent{\bf Case 6}: $\alpha = \beta$, and for some $k \le N$, $t_l = u_l = (\omega^*)^{m_l}$ for $i \le l < k$, and $t_k = \omega^{m_k}$.

Thus $\bigcup_{l < i}t_l \cong \bigcup_{l < i}u_l \cong \alpha$, and $\bigcup_{i \le l < k}t_l = \bigcup_{i \le l < k}u_l \cong \gamma^*$, for some ordinal $\gamma$. 

Player I plays the greatest point $a_1$ of $t_{k-1}$. If II's reply lies in $\bigcup_{l < k-1}u_l$ then we may argue as in Case 4 that I can win. If $b_1 \in u_{k-1}$, but is not among the final $\omega^*$ 
points, then I plays the greatest point $b_2$ of $u_{k-1}$. Let $a_2$ be II's reply. If $(a_1, a_2)$ is well-ordered, either $b_1$ has no successor or I can play $b_3 \in (b_1, b_2)$ having no successor,
and wins in 2 more moves. Otherwise, if possible, I plays $a_3 \in (a_1, a_2)$ with no predecessor and wins in 2 more moves. If this is not possible, then $m_k = 1$ and $t_{k+1} = (\omega^*)^{m_{k+1}}$, in 
which case I plays some point $a_3$ of $t_{k+1}$ in $(a_1, a_2)$. Since the monomials are special, $m_{k+1} > 1$ and I wins by appeal to Lemma \ref{4.3}.

Next suppose that $b_1 \in \bigcup_{l \ge k}u_l$. Now player I plays the greatest point $b_2$ of $u_{k-1}$ and the whole argument is rerun with $A$ and $B$ interchanged.

The only case remaining is that $b_1$ lies in the final $\omega^*$ points of $u_{k-1}$ (or in the reversed argument, that $a_2$ lies in the final $\omega^*$ points of $t_{k-1}$). If 
$u_k = (\omega^*)^{n_k}$, player I plays the greatest point $b_2$ of $u_k$. If $a_2$ is II's reply, I plays $a_3 \in (a_1, a_2)$ if possible having no predecessor, and wins in 2 more moves. If this is not 
possible, then $m_k = 1$ and $t_{k+1} = (\omega^*)^{m_{k+1}}$, and as $A$ is a special monomial sum, $m_{k+1} > 1$. Player I plays the greatest point $a_3$ of $t_{k+1}$, and wins using Lemma \ref{4.3}.

Otherwise, $u_k = \omega^{n_k}$. Now we see that $A^{<a_1} \cong B^{< b_1} \cong \alpha + \gamma^*$, and $A^{>a_1} \cong \bigcup_{l \ge k}t_l$, and $B^{>b_1} \cong \bigcup_{l \ge k}u_l$. We may now argue 
inductively, since there are fewer blocks between $t_k$, $u_k$ and $t_N$, $u_N$ than from $t_0$, $u_0$.     \end{proof} 

\vspace{.1in}

The size of the class of such structures up to elementary equivalence is therefore $2^{\aleph_0}$. We remark that the result we would really like is just as for Theorem \ref{4.5}, but with $\omega$-sums
replaced by $\mathbb Z$-sums. The problem in proving the result in this case is as follows. Assuming that $A \not \cong B$, player I endeavours to find a finite Ehrenfeucht-Fra\"iss\'e game
on $A$ and $B$ that he can win. He plays some $a_1 \in A$ for instance, and player II replies with $b_1 \in B$. We now know that $A^{>a_1} \not \cong B^{>b_1}$ or $A^{<a_1} \not \cong B^{<b_1}$, suppose the
former. This essentially reduces the problem to an $\omega$-sum, so should be amenable to the methods presented in Theorem \ref{4.5}. The trouble is however, that the number of moves required by player I
may depend on the choice of $b_1$ by player II, and player I may not be able to predict this number in advance, which it is essential that he can do.

\vspace{.1in}

Next we consider sums of more general terms, however only in a special case, namely of two terms $A = A_1 + A_2$ and $B = B_1 + B_2$, where $A_1, A_2$ start differently (one with $\omega$, the other with 
$\omega^*$) and $B_1, B_2$ start in the same ways respectively. Generally we'd expect that $l(A, B) = \min(l(A_1, B_1), l(A_2, B_2))$, since we could imagine that the players will play either on the left, 
or on the right, but this isn't necessarily true. For an easy example consider $A_1 = \omega^*$, $B_1 = (\omega^*)^2$, $A_2 = B_2 = \omega \cdot \omega^*$, where $l(A_1, B_1) = 2$, $l(A_2, B_2) = \infty$, 
but $l(A, B) = 3$ (see below). A precise list of the ways in which the expected result fails is included in the following result. 

\begin{theorem} \label{4.6} Let $A = A_1 + A_2$ and $B = B_1 + B_2$ where $A_1 = M_0^{m_0} \cdot M_1^{m_1} \cdots M_{s-1}^{m_{s-1}}$, $A_2 = N_0^{n_0} \cdot N_1^{n_1} \cdots N_{t-1}^{n_{t-1}}$ and 
$B_1 = M_0^{p_0} \cdot M_1^{p_1} \cdots M_{u-1}^{p_{u-1}}$, $B_2 = N_0^{q_0} \cdot N_1^{q_1} \cdots N_{v-1}^{q_{v-1}}$ be sums of two monomials where 
\begin{enumerate}
\item[(i)] the $M_i$ alternate between $\omega$ and $\omega^*$, and so do the $N_i$ and,
\item[(ii)] $M_0 = \omega \Leftrightarrow N_0 = \omega^*$.
\end{enumerate}
Then $l(A, B) = \min(l(A_1, B_1), l(A_2, B_2))$, except when $M_0 = \omega^*$, in the following cases:

$s = u = 1$, $t, v \ge 2$,  and $1 = \min(m_0, p_0)$, $m_0 \neq p_0$,

$t = v = 1$, $s, u \ge 2$,  and $1 = \min(n_0, q_0)$, $n_0 \neq q_0$,

$s = 1$, $u \ge 2$, and $t \ge 2$ or $t = 1$ and $m_0 \ge 2$,

$u = 1$, $s \ge 2$, and $v \ge 2$ or $v = 1$ and $p_0 \ge 2$,

$t = 1$, $v \ge 2$, and $s \ge 2$ or $s = 1$ and $n_0 \ge 2$,

$v = 1$, $t \ge 2$, and $u \ge 2$ or $u = 1$ and $q_0 \ge 2$, \hspace{.5in} in which case $l(A,B) = 3$

$s = 1$, $u \ge 2$, and $t = m_0 = 1$,

$u = 1$, $s \ge 2$, and $v = p_0 = 1$,

$t = 1$, $v \ge 2$, and $s = n_0 = 1$,

$v = 1$, $t \ge 2$, and $u = q_0 = 1$, \hspace{.5in}
in which case $l(A, B) = 2$.           \end{theorem}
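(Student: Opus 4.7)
The plan is to establish the expected formula $l(A,B) = \min(l(A_1,B_1), l(A_2,B_2))$ as the generic outcome, then to handle separately the exceptional cases where the boundary structure between $A_1$ and $A_2$ (and between $B_1$ and $B_2$) gives Player II extra moves.

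For the upper bound $l(A,B) \ge \min(l(A_1,B_1), l(A_2,B_2))$, Player II uses a parallel strategy: when Player I plays in $A_i$ (resp.\ $B_i$) for $i \in \{1,2\}$, II responds in $B_i$ (resp.\ $A_i$) using the optimal strategy $\tau_i$ for the subgame on $(A_i,B_i)$. Condition (i) forces $A_1$ to end with a block of one sort ($\omega$ or $\omega^*$) and $A_2$ to begin with the opposite sort, and similarly for $B$. Hence no order relations across the split $A_1 \mid A_2$ can be broken that are not already tracked within each side, and the partial isomorphism survives all $\min(l(A_1,B_1), l(A_2,B_2))$ moves.

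For the matching generic lower bound $l(A,B) \le \min(l(A_1,B_1), l(A_2,B_2))$, let $i$ realize the minimum. Player I plays his winning strategy $\sigma_i$ for the $(l(A_i, B_i)+1)$-move game, restricting all moves to $A_i \cup B_i$. If II always responds on side $i$, then $\sigma_i$ succeeds. Otherwise II crosses over once, say answering some $a \in A_i$ by $b \in B_j$ with $j \ne i$; I then wins in $O(1)$ extra moves by picking a point in $A_j$ or $B_i$ exploiting the structure of the $\omega^{m}$-blocks adjacent to the split (via Lemma~\ref{2.7}) to force a mismatch. The non-exceptional hypothesis is precisely what guarantees that such crossovers cannot be turned into long-term advantage for II.

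The exceptional cases all have $M_0 = \omega^*$, so the junction of $A_1$ and $A_2$ places an $\omega$- or $\omega^*$-power at the end of $A_1$ (depending on the parity of $s$) adjacent to the $\omega$-power $\omega^{n_0}$ starting $A_2$, and likewise for $B$. In each listed sub-case the specific discrepancy between $s$ and $u$ (or $t$ and $v$), together with the small parameter $m_0 = 1$, $p_0 = 1$, etc., lets II cross the boundary \emph{once} and mimic the short side using material from the other half, gaining one or two moves over the naive bound. The absorptions from Lemma~\ref{2.5} (e.g.\ $\omega \equiv \omega + \zeta \cdot B$ and $\omega^* + \omega = \zeta$) are the principal source of these extra moves. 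For each sub-case I will give (a) an explicit winning strategy for II in $l(A,B)$ moves, routinely crossing over the first time I probes the short side and then mirroring on the opposite half; and (b) a winning strategy for I in $l(A,B)+1 \in \{3,4\}$ moves, of the flavour "play a point with no immediate predecessor or successor, then force II into a position lacking that property," in the spirit of Lemma~\ref{4.3} and of Cases~3--6 of the proof of Theorem~\ref{4.5}.

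The main obstacle is the exceptional-case analysis itself. The ten listed sub-cases halve to five symmetric pairs under the duality of Lemma~\ref{1.5} (swapping $\omega$ with $\omega^*$ and interchanging $A_1 \leftrightarrow A_2$, $B_1 \leftrightarrow B_2$), but within each pair one must give strategies that depend delicately on the parities of $s, u, t, v$ and on which of $m_0, p_0, n_0, q_0$ equal $1$. In parallel, one has to verify that the generic lower-bound argument genuinely fails in each exception---that is, that II's crossover really does extend the game by one or two moves rather than being matchable by a cleverer $\sigma_i$ for I. I anticipate this can be organized as a finite table of concrete three- and four-move games, each resolved by direct inspection together with the few basic equivalences already established.
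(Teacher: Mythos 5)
Your overall architecture matches the paper's: Player II wins $\min(l(A_1,B_1),l(A_2,B_2))$ moves by running the two optimal strategies in parallel on whichever side I plays (this part is fine and is exactly what the paper does), Player I tries to confine play to the side realizing the minimum, and the exceptional cases are treated separately. But your lower bound has a genuine gap. You have I simply follow $\sigma_i$ ``restricting all moves to $A_i\cup B_i$'' and claim that if II crosses over, I wins in $O(1)$ extra moves. Two things go wrong. First, the budget: if the crossover happens on a late move of $\sigma_i$, the two or three punishing moves may not fit inside the total allowance of $\min(\dots)+1$. Second, the punishment itself is not automatic: a crossover response $b\in B_2$ to a move $a\in A_1$ need not break the partial isomorphism, and I can only exploit it if his own earlier moves were placed so that $b$ has an adjacent point (a predecessor or successor) whose image must land strictly between two already-played points on the other side. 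The paper resolves both problems at once by \emph{not} playing $\sigma_i$ directly: it re-runs the two-phase decomposition of Theorems \ref{3.4} and \ref{3.5}, writing $A_1=M_0^{m_0}\cdot A'$, $B_1=M_0^{m_0}\cdot B'$, having I play an $l$-move auxiliary game on the blocks (always choosing the first or last point of a block), and reserving $2m_0$ moves. A crossover during the first $l$ moves is then punished in $2$ moves, which fits because at least $2m_0\ge 2$ moves remain; and in the second phase the play is already trapped between previously played points on the left, so crossover is impossible. This also forces a separate check of the basis cases of \ref{3.4} and \ref{3.5}, which the paper carries out and which your sketch omits entirely.

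On the exceptional cases, your proposal is a plan rather than a proof, and the mechanism you name is not the right one: Lemma \ref{2.5} ($\zeta$-absorption) plays no role there. What actually creates the exceptions is that an endpoint distinction which gives I a one- or two-move win in the isolated game $G(A_1,B_1)$ (e.g.\ $A_1=(\omega^*)^{m_0}$ has a greatest element but $B_1$ does not, so $l(A_1,B_1)=1$) is destroyed by concatenation: in $A=A_1+A_2$ that greatest element merely becomes a point without an immediate successor, or even acquires a successor when $A_2$ has a least element, and the resulting games are $2$- or $3$-move games decided by which of the four orders have endpoints and which points have neighbours. Each listed sub-case needs the explicit short strategies the paper supplies; ``a finite table resolved by direct inspection'' is where all of that work actually lives.
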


\begin{proof} First we deal with the exceptional cases. We remark that in all of these, the formula $l(A, B) = \min(l(A_1, B_1), l(A_2, B_2))$ is invalid. In the first, for instance, $l(A_1, B_1) = 2$, and 
$l(A_2, B_2) \ge 2$, which gives $\min(l(A_1, B_1), l(A_2, B_2)) = 2$, and if $s = 1$ and $u \ge 2$, then $l(A_1, B_1) = 1$.

We first consider $s = u = 1$, $A_1 = \omega^*$, $B_1 = (\omega^*)^{p_0}$, $p_0 > 1$, and $t, v \ge 2$, so that $A_2$, $B_2$ have no least. For a winning strategy in $G_4(A, B)$, player I plays the last 
point $b_1$ of the last but one copy of $\omega^*$ in $(\omega^*)^{p_0}$, so that $B_1^{> b_1} \cong \omega^*$. If II plays $a_1 \in A$ which has a successor $a_2$, then II plays $a_2$ on his second move, 
and whatever $b_2 > b_1$ in $B$ II plays, as $b_1$ has no immediate successor, I can play $b_3$ between $b_1$ and $b_2$ and win on the third move. Since every member of $A_2$ has a successor, the only other 
option is that II plays the greatest point $a_1$ of $A_1$. Now I plays the greatest point $b_2$ of $B_1$, and I must respond in $A_2$, and I wins in 2 more moves, making 4 in all (we have here used the fact 
that $B_2$ has no least). Next we have to see that II has a winning strategy in $G_3(A, B)$. For this we observe that if two linear orders each have at least 3 elements, and one has a minimum if and only if 
the other does, and similarly for maxima, then they are 2-equivalent. Note that player II can play in such a way that if the first two moves are both on the left, then for some $k \in \omega$ and $\lambda$ 
which is a limit ordinal or 0, $a_1$ is the $k$th point from the right in $A_1$ and $b_1$ is the $(\lambda + k)$th point from the right in $B_1$. The remark just made, together with the assumption that 
$A_2, B_2$ do not have least elements (since they clearly do not have greatest either) shows that $A^{< a_1} \equiv_2 B^{< b_1}$ and $A^{> a_1} \equiv_2 B^{> b_1}$ as required. Similarly, if the play takes 
place on the right, II can ensure that $a_1$ is a successor if and only if $b_1$ is a successor, which is sufficient to guarantee the truth of the same condition.

The case of $t = v = 1$, $s, u \ge 2$,  and $1 = \min(n_0, q_0)$, $n_0 \neq q_0$  is handled similarly (it is just obtained by reversing the ordering).

For the remaining `exceptional' cases, it suffices to consider that of $s = 1$ and $u \ge 2$, subdivided into $t \ge 2$, $t = 1$ and $m_0 \ge 2$, and $t = m_0 = 1$. (The others are essentially the same, 
bearing in mind that the right hand terms begin with $\omega$ instead of $\omega^*$, which is compensated for by the fact that we are now reading `in the other direction'.)

For $s = 1$, $u \ge 2$, $t \ge 2$, we have $A_1 = (\omega^*)^{m_0}$, $A_2 = \omega^{n_0} \cdot (\omega^*)^{n_1} \cdots N_{t-1}^{n_{t-1}}$. We see that I can win in 4 moves. On his first move he plays the 
greatest point of $A_1$. If II plays $b_1 \in B_2$, then I plays its successor on the next move, and wins in 3 moves. If II plays $b_1 \in B_1$, I plays $b_2 > b_1$ in $B_1$ which has no successor. Whatever 
move $a_2$ II plays, it lies in $A_2$, so has a successor, which I can play on his 3rd move, and I wins in 4 moves in all. Now we have to show that II can win the 3-move game. We remark that two infinite
linear orders are 2-equivalent if and only one has a least element if and only if the other does, and one has a greatest element if and only if the other does. Since neither $A$ nor $B$ has a greatest or 
least element, we just have to note that both orders have the same 3 options for $A^{< a}$ and $A^{> a}$ occurring, namely there are $a_i \in A$ such that $A^{< {a_1}}$ has a greatest and $A^{> {a_1}}$ has a 
least ($a_i$ is an `inner' point of a copy of $\omega$ or $\omega^*$), $A^{< {a_2}}$ has a greatest and $A^{> {a_2}}$ has no least, and $A^{< {a_3}}$ has no greatest and $A^{> {a_3}}$ has a least, and 
similarly for $B$. Thus II can respond to whatever I plays on his first move so that if $a$ and $b$ are the first moves played, then $A^{< a} \equiv_2 B^{< b}$ and $A^{> a} \equiv_2 B^{> b}$, so II can win.

Next consider $s = 1$, $u \ge 2$, $t = 1$ and $m_0 \ge 2$. Thus $A_1 = (\omega^*)^{m_0}$, $A_2 = \omega^{n_0}$. To win in 4 moves, I starts by playing the $\omega$th point $a_1$ of $A_1$ from the right. If 
II plays in $B_2$, I plays its successor and wins in 3 moves. If II plays $b_1 \in B_1$, I plays $b_2 > b_1$ in $B_1$ with no successor. Whichever $a_2 > a_1$ II plays, it has a successor, so I can win in 2 
more moves, making 4 in all. A similar argument to the previous paragraph shows that II can win in 3 moves.

If $s = 1$, $u \ge 2$, and $t = m_0 = 1$, then $A_1 = \omega^*$, $A_2 = \omega^{n_0}$, so {\em every} point of $A$ has a successor. I can win in 3 moves by playing a point $b_1$ of $B_1$ with no successor 
(the greatest point of a copy of $(\omega^*)^{p_0}$ for instance); now any response $a_1$ by II has a successor, so I can win in 2 more moves. Here we have to see that II can win in 2 moves. This is 
immediate since $A$ and $B$ have no greatest or least.

Now let use move on to the general (non-exceptional) case.

Let us write $m$ for min$(l(A_1, B_1), l(A_2, B_2))$, and suppose without loss of generality that 
$m = l(A_1, B_1) = l(M_0^{m_0} \cdot M_1^{m_1} \cdots M_{s-1}^{m_{s-1}}, M_0^{p_0} \cdot M_1^{p_1} \cdots M_{u-1}^{p_{u-1}})$ is finite. In $m$ moves, Player II has strategies for both games 
$G(M_0^{m_0} \cdot M_1^{m_1} \cdots M_{s-1}^{m_{s-1}}, M_0^{p_0} \cdot M_1^{p_1} \cdots M_{u-1}^{p_{u-1}})$ and 
$G(N_0^{n_0} \cdot N_1^{n_1} \cdots N_{t-1}^{n_{t-1}}, N_0^{q_0} \cdot N_1^{q_1} \cdots N_{v-1}^{q_{v-1}})$, and so he wins by using whichever one is required, depending on which side I's moves lie. 

A rather more complicated argument is required to show that player I can win in at most $m+1$ moves, as it isn't clear how he can ensure that the other player plays on the same side as him (which is the 
left, in view of our assumption on $m$). The best hope is that player I can `force' the play to take place on the left. If II strays to the right in the early stages, then I can ensure a quick win, and 
otherwise, his strategy will ensure that the play is to the left of already played moves, unlike in the exceptional cases. We follow the proofs of Theorems \ref{3.4} and \ref{3.5}, which compare 
monomials of the same and different lengths respectively. In the induction steps, we can ensure that this happens without too much trouble. Suppose that $A_1 = M_0^{m_0} \cdot A'$, and 
$B_1 = M_0^{m_0} \cdot B'$, and assume that I has a winning strategy $\sigma$ for the game on $A'$ and $B'$ in $l$ moves. We show that he can convert this into a winning strategy on the game between $A$ and 
$B$ in $2m_0 + l$ moves. Let I play in $A_1 \cup B_1$ according to $\sigma$ on the copies of $M_0^{m_0}$, playing the first or last point depending on whether $M_0 = \omega$ or $\omega^*$. If ever in the 
first $l$ moves, II plays in $A_2 \cup B_2$, then I can win in 2 more moves by playing the predecessor or successor of II's last move in $A_2 \cup B_2$, in the two cases, and then II must respond with a 
point {\em not} adjacent to I's last move in $A_1 \cup B_1$, and I wins by playing in between the two in $A_1 \cup B_1$. Otherwise, as $\sigma$ is winning in the game on $A'$ and $B'$, at some stage by the 
$l$th, player II plays in the same copy of $M_0^{m_0}$ more than once. By the argument given in the proof of Lemma \ref{2.9}(i) for instance, player I can win in at most $2m_0$ more moves, and this is 
because he forces the play to lie between points which have already been played {\em on the left}. 

This leaves us to deal with various `basis cases'. Some of these also follow by the same argument, since they use variants of the 2-phase lemma, and one or two require separate treatment.

There are two cases arising in the basis of Theorem \ref{3.4}(i), depending on whether $M_0 = \omega$ or $\omega^*$. In the first of these, where $A_1 = \omega^{m_0}, B_1 = \omega^{p_0}$, $m_0 < p_0$, 
player I plays the $\omega^{m_0}$th point $b_1$ of $B_1$. If II plays in $A_2$, then I wins in at most 2 more moves, and $3 \le 2m_0 + 1$. Otherwise, $a_1 \in A_1$, and I wins as usual, forcing the play to 
the left of the initial moves. 

In the second case, $A_1 = (\omega^*)^{m_0}, B_1 = (\omega^*)^{p_0}$, $m_0 < p_0$. First note that as $l(A_1, B_1) = 2m_0$ and $l(A_1, B_1) \le l(A_2, B_2)$, $A_2$ has a least if and only if $B_2$ has a 
least (and they both have no greatest). For if one of them has a least and the other does not, then $l(A_2, B_2) = 1$. If $m_0 \ge 2$, I plays the $\omega^{m_0}$th point $b_1$ of $B_1$ from the right. 
If II's move $a_1$ lies in $A_2$, then I plays its successor and wins in $\le 3$ moves. If $a_1 \in A_1$ is not the greatest in $A_1$, I plays $a_2 > a_1$ in $A_1$ so that for some $k < m_0$, 
$(a_1, a_2] \cong (\omega^*)^k$. If II replies in $B_1$, then the game continues in $A_1 \cup B_1$ on $(\omega^*)^{m_0}$ and $(\omega^*)^{m_0} \cdot 2$, so I wins in $2k+1$ more moves, $\le 2m_0 - 1$. If II 
replies in $B_2$ then I wins in 2 more moves, and $4 \le 2m_0 + 1$. If $a_1$ is the greatest member of $A_1$, I plays the $\omega^{m_0}$th point of $B_1$ from the right. Now II must reply in $A_2$, so I 
again wins in 2 more moves.

If $m_0 = 1$, we have to show that I can win in 3 moves. As this is not the exceptional case, $A_2$ or $B_2$ has a least, and by the above remark they both have a least. Hence $A_2 = \omega^{n_0}$ and
$B_2 = \omega^{q_0}$. I plays the $\omega$th point $b_1$ of $B_1$ from the right. In this case, all members of $A$ have successors, so whatever point of $A$ II plays, I can win in 2 more moves.

The basis cases for \ref{3.5}(i) are the following: $A_1 = \omega^{m_0}$, $B_1 = \omega^{p_0} \cdot (\omega^*)^{p_1} \cdots$; $A_1 = (\omega^*)^{m_0}$, $B_1 = (\omega^*)^{p_0} \cdot \omega^{p_1} \cdots$; 
$A_1 = \omega^{m_0}(\omega^*)^{m_1}$, $B_1 = \omega^{m_0}(\omega^*)^{p_1} \cdots$; $A_1 = (\omega^*)^{m_0}\omega^{m_1}$, $B_1 = (\omega^*)^{m_0} \omega^{p_1} \cdots$. 

In the first of these, $A_1$ has a least and $B_1$ does not, and the same applies to $A$ and $B$, so $l(A, B) = l(A_1, B_1) = 1$. In the third and fourth cases, since we are working on the copies of 
$\omega^{m_0}$ or $(\omega^*)^{m_0}$ an auxiliary game is used, and I can force the play to lie on the left (or else II deviates to the right and I wins quickly).

The second case is one of the exceptional ones, so has already been covered. 

The basis case for \ref{3.4}(ii) is $s = u = 2$, $A_1 = M_0^{m_0}M_1^{m_1}$, $B_1 = M_0^{p_0}M_1^{p_1}$, $m_0 < p_0$. We write $A' = M_1^{m_1}$, $B' = M_0^{p_0 - m_0}M_1^{p_1}$. Here $A$ has a greatest
element or least, and $B'$ has neither, so I wins $G(A',B')$ in 2 moves. He plays according to this winning strategy in $G(A, B)$ playing the least or greatest element of the appropriate copy of $M_0^{m_0}$ 
in $A_1 \cup B_1$, and wins in at most $2m_0 + 2$ moves (considering separately the case in which II plays in $A_2 \cup B_2$ on one of the first two moves).

The basis case for \ref{3.5}(ii) is handled similarly, which is $A_1 = M_0^{m_0}M_1^{m_1}$, $B_1 = M_0^{p_0}M_1^{p_1}M_2^{p_2} \cdots$, $m_0 \neq p_0$.

The basis cases for \ref{3.4}(iii) and \ref{3.5}(iii) are $s = u \ge 3$, $A_1 = M_0^{m_0} \cdots M_{s-1}^{m_{s-1}}$, $B_1 = M_0^{p_0} \cdots M_{u-1}^{p_{u-1}}$, $m_0 < p_0$. The same argument applies,
on $M_0^{m_0}$ blocks, taking into account dealing with a possible play by II on the right.
\end{proof}


\begin{thebibliography}{12}
 
\bibitem{ehrenfeucht} A. Ehrenfeucht,  An application of games to completeness problem for formalized theories, Fund. Math. 49 (1961), 129-141.

\bibitem{montalban} A. Montalban, Equimorphism invariants for scattered linear orderings, Fund. Math. 191 (2006), 151-173.

\bibitem{mostowski} A. Mostowski and A. Tarski, Arithmetically definable classes and types of well-ordered systems, Bulletin of the American Mathematical Society 55 (1949), 65, 1192.

\bibitem{mwesigye1} F.~Mwesigye, Elementary equivalence of linear orders and coloured linear orders, PhD thesis, University of Leeds, 2009.

\bibitem{mwesigye2} F.~Mwesigye, J.K.~Truss, Classification of finite coloured linear orderings, Order, 28 (2011), 387-397.

\bibitem{mwesigye3} F.~Mwesigye, J.K.~Truss, Ehrenfeucht-Fra{\"i}ss{\'e} games on ordinals, to appear in the Annals of Pure and Applied Logic.

\bibitem{mwesigye4} F.~Mwesigye, J.K.~Truss, On optimal representatives of finite coloured linear orders, to appear in Order.

\bibitem{rosenstein} J.G. Rosenstein, Linear Orderings, Academic press, 1982.
\end{thebibliography}
\end{document}